\documentclass[reqno,oneside,11pt]{amsart}
\usepackage[a4paper,text ={14cm, 22.5cm}, centering, top=3cm]{geometry}
\usepackage[utf8]{inputenc}
\usepackage[T1]{fontenc}
\usepackage{stmaryrd,mathdots}
\usepackage[dvipsnames,svgnames,x11names,hyperref]{xcolor}
\usepackage[english]{babel}

\usepackage{lscape} 
\usepackage[shortlabels]{enumitem}
\setlist[enumerate]{label=\rm{(\arabic*)}}
\setlist[enumerate,2]{label=\rm({\it\roman*})}
\setlist[itemize]{label=\raisebox{0.25ex}{\tiny$\bullet$}}
\usepackage{fourier} 
\usepackage{ctable}
\usepackage{amssymb,amsmath}

\theoremstyle{plain}
\newtheorem{theorem}{Theorem}[section]
\newtheorem{corollary}[theorem]{Corollary}
\newtheorem{proposition}[theorem]{Proposition}
\newtheorem{lemma}[theorem]{Lemma}

\newtheorem{theoalph}{Theorem}

\newtheorem{coralph}[theoalph]{Corollary}
\newtheorem{proalph}[theoalph]{Proposition}

\theoremstyle{definition}

\newtheorem*{question}{Question}

\newtheorem{remark}[theorem]{Remark}

\addtocounter{section}{0}            

\newcommand{\smallO}[1]{\ensuremath{\mathop{}\mathopen{}{\scriptstyle\mathcal{O}}}}

\usepackage[pdfauthor={}, colorlinks, linktocpage, citecolor = Maroon, linkcolor = Maroon, urlcolor = Maroon]{hyperref}
\usepackage[all]{hypcap}        

\title{Cremona maps and involutions}

\author{Julie D\'eserti}

\date{\today}

\begin{document}

\maketitle

\begin{abstract}
We deal with the following question of Dolgachev : is the Cremona group generated
by involutions ? Answer is yes in dimension $2$ (\emph{see} 
\cite{CerveauDeserti}).
We give an upper bound of the minimal number $\mathfrak{n}_\varphi$ of involutions 
we need to write a birational self map~$\varphi$ of $\mathbb{P}^2_\mathbb{C}$. 

We prove that de Jonquières maps of $\mathbb{P}^3_\mathbb{C}$ and maps of small 
bidegree of $\mathbb{P}^3_\mathbb{C}$ can be written as a composition of involutions 
of $\mathbb{P}^3_\mathbb{C}$ and give an upper bound of $\mathfrak{n}_\varphi$ for such maps $\varphi$.
We get similar results in particular for automorphisms of 
$(\mathbb{P}^1_\mathbb{C})^n$, automorphisms of~$\mathbb{P}^n_\mathbb{C}$, 
tame automorphisms of $\mathbb{C}^n$, monomial maps of $\mathbb{P}^n_\mathbb{C}$, 
and elements of the subgroup generated by the standard involution of
$\mathbb{P}^n_\mathbb{C}$ and $\mathrm{PGL}(n+1,\mathbb{C})$.
\end{abstract}

\section{Introduction}

This article is motivated by the following question:

\begin{question}[Dolgachev]
Is the $n$ dimensional Cremona group generated by involutions ?
\end{question}

Answer is yes in dimension $2$; more precisely:

\begin{proposition}[\cite{CerveauDeserti}]\label{pro:dim2complexe}
For any $\varphi$ in $\mathrm{Bir}(\mathbb{P}^2_\mathbb{C})$ there exist $A_0$, $A_1$, 
$\ldots$, $A_k$ in $\mathrm{Aut}(\mathbb{P}^2_\mathbb{C})$ such that
\[
\varphi=\Big(A_0\circ\sigma_2\circ A_0^{-1}\Big)\circ\Big(A_1\circ\sigma_2\circ A_1^{-1}\Big)\circ\ldots\circ\Big(A_k\circ\sigma_2\circ A_k^{-1}\Big)
\]
where $\sigma_2$ denotes the standard involution of $\mathbb{P}^2_\mathbb{C}$
\[
\sigma_2 \colon (z_0:z_1:z_2)\dashrightarrow(z_1z_2:z_0z_2:z_0z_1).
\]
\end{proposition}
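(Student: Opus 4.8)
The plan is to combine the Noether--Castelnuovo theorem with an explicit computation inside $\mathrm{PGL}(3,\mathbb{C})$. Let $G\subseteq\mathrm{Bir}(\mathbb{P}^2_\mathbb{C})$ be the subgroup generated by all the involutions $A\circ\sigma_2\circ A^{-1}$ with $A\in\mathrm{PGL}(3,\mathbb{C})$; this is precisely the set of maps admitting a factorization as in the statement. Since $B\circ(A\circ\sigma_2\circ A^{-1})\circ B^{-1}=(B\circ A)\circ\sigma_2\circ(B\circ A)^{-1}$, the group $G$ is normalized by $\mathrm{PGL}(3,\mathbb{C})$. By the Noether--Castelnuovo theorem $\mathrm{Bir}(\mathbb{P}^2_\mathbb{C})$ is generated by $\mathrm{PGL}(3,\mathbb{C})$ and $\sigma_2$; as $\sigma_2=\mathrm{id}\circ\sigma_2\circ\mathrm{id}^{-1}\in G$, it suffices to show $\mathrm{PGL}(3,\mathbb{C})\subseteq G$, for then $G=\mathrm{Bir}(\mathbb{P}^2_\mathbb{C})$ and every $\varphi$ has the desired form.

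The crux is a computation with diagonal maps. For $D=\mathrm{diag}(a,b,c)$ a direct calculation gives $\sigma_2\circ D\circ\sigma_2=\mathrm{diag}(bc,ac,ab)=D^{-1}$ in $\mathrm{PGL}(3,\mathbb{C})$, whence
\[
\sigma_2\circ\big(D\circ\sigma_2\circ D^{-1}\big)=D^{-2}.
\]
Because every complex number has a square root, each diagonal element of $\mathrm{PGL}(3,\mathbb{C})$ is of the form $D^{-2}$, hence is the product of the two involutions $\sigma_2$ and $D\circ\sigma_2\circ D^{-1}$ and lies in $G$. Conjugating and using that $G$ is normalized by $\mathrm{PGL}(3,\mathbb{C})$, I obtain that \emph{every} diagonalizable element of $\mathrm{PGL}(3,\mathbb{C})$ belongs to $G$, written with two involutions.

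It remains to reach the non-diagonalizable elements, and then to make the count explicit. For the first point I would invoke the classical fact that the diagonalizable elements generate $\mathrm{PGL}(3,\mathbb{C})$ (equivalently, that every element is a product of at most two diagonalizable matrices), which one checks by writing each transvection as a product of two diagonalizable matrices and recalling that transvections generate $\mathrm{SL}(3,\mathbb{C})$; this gives $\mathrm{PGL}(3,\mathbb{C})\subseteq G$ and proves the statement. For the bound on $\mathfrak{n}_\varphi$ I would run the argument quantitatively: Noether--Castelnuovo expresses $\varphi$ as a word $B_0\circ\sigma_2\circ B_1\circ\sigma_2\circ\cdots\circ\sigma_2\circ B_m$ with $B_i\in\mathrm{PGL}(3,\mathbb{C})$, which I rewrite, by successively peeling off factors $(B_0\circ\cdots\circ B_i)\circ\sigma_2\circ(B_0\circ\cdots\circ B_i)^{-1}$, as a product of $m$ conjugates of $\sigma_2$ followed by one leftover automorphism, the latter costing at most four further involutions by the two previous steps.

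I expect the main obstacle to be the passage to non-semisimple elements: the square-root identity handles diagonal, hence diagonalizable, maps cleanly, but controlling unipotent parts requires the matrix-factorization input, and it is there that one must keep the number of involutions uniform in order to extract a genuine bound on $\mathfrak{n}_\varphi$ rather than mere finiteness.
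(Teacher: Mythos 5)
Your argument is correct, but note that the paper offers no proof of this proposition at all: it is quoted verbatim from \cite{CerveauDeserti}, so there is no in-paper argument to compare against. Your route --- reduce via Noether--Castelnuovo to showing $\mathrm{PGL}(3,\mathbb{C})\subseteq G$, then use the identity $\sigma_2\circ D\circ\sigma_2=D^{-1}$ to get every diagonal (hence every diagonalizable) element as a product of two conjugates of $\sigma_2$, then reach the rest of $\mathrm{PGL}(3,\mathbb{C})$ by writing transvections as products of diagonalizable matrices --- is sound; I checked the computation $\sigma_2\circ\mathrm{diag}(a,b,c)\circ\sigma_2=\mathrm{diag}(bc,ac,ab)=\mathrm{diag}(a,b,c)^{-1}$ in $\mathrm{PGL}(3,\mathbb{C})$, and the normalization of $G$ by $\mathrm{PGL}(3,\mathbb{C})$ is immediate. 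The more standard finish (and the one this paper itself uses for the real analogue, Theorem \ref{thm:realCremonagroup}) is shorter: $G\cap\mathrm{PGL}(3,\mathbb{C})$ is a normal subgroup of the simple group $\mathrm{PGL}(3,\mathbb{C})$, and your identity already exhibits a nontrivial element $D^{-2}$ in it, so it is everything --- no discussion of transvections or of which elements are diagonalizable is needed. What your longer route buys is effectivity: simplicity gives no control on word length, whereas your argument can be made quantitative. On that score, one caution: ``the diagonalizable elements generate'' and ``every element is a product of at most two diagonalizable elements'' are not equivalent as you parenthetically suggest; the latter is true over $\mathbb{C}$ but needs its own proof, and without it your transvection argument yields a bound of roughly $16$ involutions for an element of $\mathrm{PGL}(3,\mathbb{C})$ rather than $4$. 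Since the proposition as stated asks for no bound, this does not affect its validity.
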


Let us note that since $\mathrm{Bir}(\mathbb{P}^2_\mathbb{R})$ is generated by
$\mathrm{PGL}(3,\mathbb{R})$ and some involutions (\cite{Zimmermann}), 
any element of $\mathrm{Bir}(\mathbb{P}^2_\mathbb{R})$ can be written as a 
composition of involutions.

If $\varphi$ is an element of $\mathrm{G}$, then $\mathfrak{n}(\varphi,\mathrm{H})$ 
is the minimal number of involutions of 
$\mathrm{H}\supset\mathrm{G}$ we need to write $\varphi$. In dimension $2$
we get the following result:

\begin{theoalph}
If $\varphi$ is an automorphism of $\mathbb{P}^1_\mathbb{C}\times\mathbb{P}^1_\mathbb{C}$, 
then 
$\mathfrak{n}\big(\varphi,\mathrm{Aut}(\mathbb{P}^1_\mathbb{C}\times\mathbb{P}^1_\mathbb{C})\big)\leq 4$.

If $\varphi$ is an automorphism of $\mathbb{P}^2_\mathbb{C}$, then 
$\mathfrak{n}\big(\varphi,\mathrm{Aut}(\mathbb{P}^2_\mathbb{C})\big)\leq 8$.

If $\varphi$ belongs to the Jonquières subgroup 
$\mathrm{J}_2\subset\mathrm{Bir}(\mathbb{P}^2_\mathbb{C})$, then 
$\mathfrak{n}\big(\varphi,\mathrm{J}_2\big)\leq 10$.

If $\varphi$ is a birational self map of $\mathbb{P}^2_\mathbb{C}$ of degree $d$, then 
$\mathfrak{n}\big(\varphi,\mathrm{Bir}(\mathbb{P}^2_\mathbb{C})\big)\leq 10d-2$.
\end{theoalph}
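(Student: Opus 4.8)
The plan is to combine a degree-controlled form of the Noether--Castelnuovo theorem with the three bounds already obtained in the earlier parts of the statement, together with the obvious subadditivity $\mathfrak{n}(\psi_1\circ\psi_2,\mathrm{Bir}(\mathbb{P}^2_\mathbb{C}))\leq\mathfrak{n}(\psi_1,\mathrm{Bir}(\mathbb{P}^2_\mathbb{C}))+\mathfrak{n}(\psi_2,\mathrm{Bir}(\mathbb{P}^2_\mathbb{C}))$, which holds simply by concatenating the two words in involutions. The first step is to factor a degree $d$ map as a product of $d-1$ de Jonquières maps and one linear map. Starting from $\varphi$ of degree $d\geq 2$, I would look at its homaloidal net and the multiplicities $m_1\geq m_2\geq\ldots$ of its (possibly infinitely near) base points, and invoke Noether's inequality $m_1+m_2+m_3> d$ to produce a de Jonquières transformation $j$ (for instance a quadratic one) attached to three points of highest multiplicity, so that composing with $j$ lowers the degree to at most $2d-m_1-m_2-m_3\leq d-1$. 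Iterating drops the degree by at least one at each step, so after at most $d-1$ such maps one reaches an element $A\in\mathrm{PGL}(3,\mathbb{C})$; unwinding (and using that the inverse of a de Jonquières map is again de Jonquières) yields
\[
\varphi=j_1\circ j_2\circ\ldots\circ j_{d-1}\circ A,
\]
with each $j_i$ de Jonquières and $A\in\mathrm{Aut}(\mathbb{P}^2_\mathbb{C})$.

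Next I would feed each factor into the estimates already available. Each $j_i$ is, up to conjugacy by an element of $\mathrm{PGL}(3,\mathbb{C})$, an element of the Jonquières subgroup $\mathrm{J}_2$: its special point can be sent to the base point of the standard pencil by a linear change of coordinates. Since conjugation by an automorphism carries an involution to an involution, hence preserves the number of involutions occurring in a factorisation, the third part of the statement gives $\mathfrak{n}(j_i,\mathrm{Bir}(\mathbb{P}^2_\mathbb{C}))=\mathfrak{n}(j_i,\mathrm{J}_2)\leq 10$ for every $i$. The remaining linear map satisfies $\mathfrak{n}(A,\mathrm{Bir}(\mathbb{P}^2_\mathbb{C}))\leq\mathfrak{n}(A,\mathrm{Aut}(\mathbb{P}^2_\mathbb{C}))\leq 8$ by the second part. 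Concatenating the $d-1$ words of length at most $10$ with the word of length at most $8$ then gives $\mathfrak{n}(\varphi,\mathrm{Bir}(\mathbb{P}^2_\mathbb{C}))\leq 10(d-1)+8=10d-2$, a bound that also reads correctly for $d=1$, where $\varphi$ is an automorphism and $10d-2=8$.

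The arithmetic of the second paragraph is routine; the genuine work is the degree-controlled factorisation of the first. The delicate point is to guarantee that a single de Jonquières map lowers the degree by at least one and that $d-1$ of them suffice: this forces careful bookkeeping of the base points, including the infinitely near ones, of the way multiplicities transform under composition, and of the different classical types of quadratic and de Jonquières involutions that must be used when the base points lie in special position. I would also want to check explicitly that the conjugating automorphisms introduced when normalising each $j_i$ into $\mathrm{J}_2$ do not inflate the count, which is precisely why it matters that the estimate $\mathfrak{n}(\cdot,\mathrm{J}_2)\leq 10$ is invariant under conjugation inside $\mathrm{Bir}(\mathbb{P}^2_\mathbb{C})$. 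Once these geometric points are settled, the stated bound follows immediately.
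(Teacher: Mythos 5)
Your treatment of the last assertion is essentially identical to the paper's proof of Theorem \ref{thm:main}: factor $\varphi$ via the Castelnuovo degree-dropping lemma into at most $d-1$ (conjugates of) de Jonqui\`eres maps and one element of $\mathrm{PGL}(3,\mathbb{C})$, then apply the $\leq 10$ bound of Corollary \ref{cor:jonquieres} to each de Jonqui\`eres factor and the $\leq 8$ bound of Lemma \ref{lem:transvection} to the linear one, giving $10(d-1)+8=10d-2$. The only caveat is that you take the first three assertions as inputs rather than proving them; in the paper these come from Lemma \ref{lem:homography}, Lemma \ref{lem:transvection} and Corollary \ref{cor:jonquieres} respectively, but your handling of the part you do argue matches the paper's route.
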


One can be more precise for the well-known subgroup $\mathrm{Aut}(\mathbb{C}^2)$ of polynomial 
automorphisms of $\mathbb{C}^2$ of $\mathrm{Bir}(\mathbb{P}^2_\mathbb{C})$:

\begin{theoalph}
Let $\varphi$ be an element of $\mathrm{Aut}(\mathbb{C}^2)$ of degree $d$. Then 
$\mathfrak{n}\big(\varphi,\mathrm{Bir}(\mathbb{P}^2_\mathbb{C})\big)\leq 44+\frac{9d}{4}$.

More precisely,
\begin{itemize}
\item if $\varphi$ is affine, then 
$\mathfrak{n}\big(\varphi,\mathrm{Aut}(\mathbb{P}^2_\mathbb{C})\big)\leq 8$;

\item if $\varphi$ is elementary, then $\mathfrak{n}\big(\varphi,\mathrm{J}_2\big)\leq 10$;

\item if $\varphi$ is generalized H\'enon map, then either it is of jacobian $1$ and 
$\mathfrak{n}\big(\varphi,\mathrm{Aut}(\mathbb{C}^2)\big)=2$ or 
$\mathfrak{n}\big(\varphi,\mathrm{Bir}(\mathbb{P}^2_\mathbb{C})\big)\leq 11$;

\item if $d$ is prime, then 
$\mathfrak{n}\big(\varphi,\mathrm{Bir}(\mathbb{P}^2_\mathbb{C})\big)\leq 26$.
\end{itemize}
\end{theoalph}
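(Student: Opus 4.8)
The plan is to reduce the general statement to the structure theorem for $\mathrm{Aut}(\mathbb{C}^2)$ and then bound each building block separately using the four specialized estimates. Recall that by the theorem of Jung and van der Kulk, every $\varphi\in\mathrm{Aut}(\mathbb{C}^2)$ is an amalgamated product of the affine group $\mathrm{A}$ and the elementary (triangular) group $\mathrm{E}$; that is, $\varphi$ can be written as an alternating composition $\varphi=g_1\circ g_2\circ\cdots\circ g_m$ where the $g_i$ lie alternately in $\mathrm{A}\setminus\mathrm{E}$ and $\mathrm{E}\setminus\mathrm{A}$. My first step would be to invoke the four specialized bounds given in the statement: affine maps cost at most $8$ involutions (viewed in $\mathrm{Aut}(\mathbb{P}^2_\mathbb{C})\subset\mathrm{Bir}(\mathbb{P}^2_\mathbb{C})$), elementary maps cost at most $10$ (in $\mathrm{J}_2\subset\mathrm{Bir}(\mathbb{P}^2_\mathbb{C})$), and the two remaining bullets handle the generalized H\'enon and prime-degree cases. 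Since $\mathfrak{n}$ is manifestly subadditive under composition — concatenating the involution words for each factor gives a word for the composite — a naive bound would be $\sum_i\mathfrak{n}(g_i)$.

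The crux of getting the clean affine coefficient $\frac{9d}{4}$ is therefore a careful \emph{degree bookkeeping} argument controlling how many elementary factors of each degree can appear in the normal form, so that their contributions sum to something proportional to $d$ rather than merely to the syllable length. First I would record the multiplicativity of degree under the amalgamated-product normal form: if $\varphi=g_1\circ\cdots\circ g_m$ is reduced with the $g_i$ elementary of degrees $d_i\geq 2$ alternating with affine maps, then $\deg\varphi=\prod_i d_i$. From $\prod d_i = d$ with each $d_i\geq 2$ one extracts $\sum(d_i-1)\leq d-1$ or a comparable linear estimate, and more precisely the total cost of all elementary and affine syllables is bounded by an affine function of $d$. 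The constant term $44$ should then absorb the fixed overhead — the leading and trailing affine factors, plus the cost of passing between the subgroups $\mathrm{A}$, $\mathrm{E}$ and the ambient $\mathrm{Bir}(\mathbb{P}^2_\mathbb{C})$ where all involutions must ultimately be measured — while the slope $\tfrac{9}{4}$ comes from optimizing the per-degree cost of an elementary factor of degree $d_i$ against the multiplicative constraint $\prod d_i=d$.

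The main obstacle I anticipate is precisely this optimization: one must show that the worst case for $\sum_i\mathfrak{n}(g_i)$ subject to $\prod_i d_i=d$ is realized (asymptotically) by factors of a single favorable degree, and that the per-degree slope for an elementary map of degree $k$ is at most $\tfrac{9k}{4}$ up to additive constants. Concretely, I would write an elementary map of degree $k$ as a product of lower-degree pieces whose individual involution costs are controlled by the bullet for prime degree (bound $26$) together with the de Jonquières bound $\mathfrak{n}(\cdot,\mathrm{J}_2)\leq 10$, and then argue that multiplicative splitting into prime factors $k=p_1\cdots p_r$ gives $\sum(p_j-1)\leq$ something linear, converting the multiplicative constraint into the additive bound. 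The delicate point is matching the numerical constant $\tfrac{9}{4}$: it presumably arises because the cheapest nontrivial degree jump achievable costs roughly $9$ involutions per factor of degree $4$ (or equivalently per doubling), so that $\log$-type splitting yields the ratio $\tfrac{9}{4}$. I would verify this by checking the extremal configuration explicitly rather than relying on a crude triangle-inequality sum, since a careless application of subadditivity would only give a bound growing with the syllable length $m$, which can itself be as large as $\log_2 d$, not linear in $d$ — so the real work is showing the proportionality constant is as claimed and that the fixed overhead is genuinely constant.
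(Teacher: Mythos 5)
Your skeleton (Jung--van der Kulk reduced words plus subadditivity of $\mathfrak{n}$, with the four bullets as building blocks) matches the paper, but the mechanism you propose for the headline bound $\frac{9d}{4}+44$ is misidentified and, as described, would not go through. You assume that the involution cost of an elementary syllable grows with its degree, and you therefore set up an optimization of $\sum_i\mathfrak{n}(g_i)$ subject to $\prod_i d_i=d$, to be resolved by splitting an elementary factor of degree $k$ into prime-degree pieces and invoking the prime-degree bound $26$. That splitting is not available: elementary automorphisms form a group in which degrees do \emph{not} multiply under composition (composing $(z_0+z_1^a,z_1)$ with $(z_0+z_1^b,z_1)$ yields degree $\max(a,b)$, not $ab$); degrees only multiply across a \emph{reduced} word alternating between $\mathrm{A}_2$ and $\mathrm{E}_2$. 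More importantly, no optimization over degree distributions is needed, because the per-syllable cost is constant: every elementary automorphism, of any degree, is a de Jonquières map and hence costs at most $10$ involutions (Corollary \ref{cor:jonquieres}), and every affine factor costs at most $8$ (Lemma \ref{lem:transvection}). The degree enters only through the number $k$ of elementary syllables: from $\deg e_i\geq 2$ and $\deg\varphi=\prod_i\deg e_i$ one gets $2(k-2)\leq\prod_{i=3}^{k}\deg e_i\leq d/4$, so $k\leq d/8+2$ and
\[
\mathfrak{n}\big(\varphi,\mathrm{Bir}(\mathbb{P}^2_\mathbb{C})\big)\leq 8(k+1)+10k=18k+8\leq\tfrac{9d}{4}+44.
\]
The coefficient $\tfrac94$ is just $18/8$, an artifact of this crude count, not the solution of an extremal problem; your (correct) observation that the syllable length is only $O(\log d)$ shows the bound is far from sharp but is not an obstacle.

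A secondary gap: the four bullets are conclusions of the theorem, not hypotheses, so they must be established rather than invoked. In the paper they come from, respectively, the transvection lemma applied in $\mathrm{PGL}(3,\mathbb{C})$ (affine $\leq 8$); the de Jonquières corollary (elementary $\leq 10$); the explicit factorization $(z_1,P(z_1)-\delta z_0)=(z_1,z_0)\circ\big(P(z_1)-\delta z_0,z_1\big)$, giving $2$ when $\delta=1$ and $1+10=11$ otherwise; and the Friedland--Milnor normal form $a_1\circ e\circ a_2$ for prime degree, giving $8+10+8=26$.
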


What happens in higher dimension ? A first result is the following:

\begin{proalph}
\begin{itemize}
\item If $\varphi$ is an automorphism of $(\mathbb{P}^1_\mathbb{C})^n$, then 
$\varphi$ can be written as a composition of involutions of 
$(\mathbb{P}^1_\mathbb{C})^n$, and 
$\mathfrak{n}\big(\varphi,\mathrm{Aut}\big(\mathbb{P}^1_\mathbb{C}\big)^n\big)\leq~2n$.

\item If $\varphi$ is an automorphism of $\mathbb{P}^n_\mathbb{C}$, then 
$\varphi$ can be written as a composition of involutions of $\mathbb{P}^n_\mathbb{C}$, 
and $\mathfrak{n}\big(\varphi,\mathrm{Aut}(\mathbb{P}^n_\mathbb{C})\big)\leq 2(n+1)$.
\end{itemize}
\end{proalph}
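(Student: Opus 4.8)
The statement splits into two essentially independent parts, so I treat them in turn.

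\smallskip

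For $(\mathbb{P}^1_\mathbb{C})^n$, the plan is to reduce everything to a single two–line lemma: \emph{every element of $\mathrm{PGL}(2,\mathbb{C})$ is a product of two involutions of $\mathbb{P}^1_\mathbb{C}$}. First I would dispose of this by normal forms. A nontrivial $M\in\mathrm{PGL}(2,\mathbb{C})$ is conjugate either to $z\mapsto\lambda z$ or to $z\mapsto z+1$; in the first case $z\mapsto\lambda z=\big(z\mapsto\lambda/z\big)\circ\big(z\mapsto 1/z\big)$, and in the second $z\mapsto z+1=\big(z\mapsto 1-z\big)\circ\big(z\mapsto -z\big)$, each factor being an involution. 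Conjugating back (which does not change the number of involutions) gives the lemma. Now an automorphism $\varphi=(M_1,\dots,M_n)\in\mathrm{Aut}(\mathbb{P}^1_\mathbb{C})^n=\mathrm{PGL}(2,\mathbb{C})^n$ acts coordinatewise; writing each $M_i=\iota_i\circ\iota_i'$ and letting $\iota_i,\iota_i'$ act on the $i$-th factor only (identity elsewhere) produces involutions of $(\mathbb{P}^1_\mathbb{C})^n$ whose composition is $\varphi$, so $\mathfrak{n}\le 2n$. If one also wishes to allow the factor–permutation part $\mathfrak{S}_n\subset\mathrm{Aut}\big((\mathbb{P}^1_\mathbb{C})^n\big)$, I would absorb it using the ``twisted swaps'' $T_\gamma\colon(x,y)\mapsto(\gamma(y),\gamma^{-1}(x))$, which are involutions for every $\gamma\in\mathrm{PGL}(2,\mathbb{C})$, together with a cycle–by–cycle bookkeeping; the ordinary transpositions are already involutions of the product.

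\smallskip

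For $\mathbb{P}^n_\mathbb{C}$ I would work in $\mathrm{GL}(n+1,\mathbb{C})$, using two cost–free moves: conjugation (which leaves $\mathfrak{n}$ unchanged, since $A\,I\,A^{-1}$ is an involution whenever $I$ is) and rescaling of a representative (free in $\mathrm{PGL}$). Recall that an involution of $\mathbb{P}^n_\mathbb{C}$ is the class of a matrix $M$ with $M^2$ scalar, that is, after rescaling a diagonalizable matrix with eigenvalues in $\{+1,-1\}$. The plan is an induction on $n$ via a \emph{peeling lemma}: for $[A]\in\mathrm{PGL}(n+1,\mathbb{C})$ I would produce at most two involutions $I_1,I_2$ of $\mathbb{P}^n_\mathbb{C}$ so that $I_1\circ[A]\circ I_2$ stabilizes a point $p$ and a complementary hyperplane $H$ and acts on $H$ as an element $[A']\in\mathrm{PGL}(n,\mathbb{C})$. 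The cleanest instance is when $A$ has a simple eigenvalue $\lambda$: then the generalized eigenspace decomposition $\mathbb{C}^{n+1}=\langle v\rangle\oplus W$ is $A$-invariant, $A$ is already block diagonal, no involution is spent, and one recurses on $[A|_W]\in\mathrm{PGL}(n,\mathbb{C})$. Starting the induction at $\mathrm{PGL}(2,\mathbb{C})$ (two involutions by the lemma above) and spending at most two involutions per reduction yields $\mathfrak{n}\le 2(n+1)$, with the slack of two absorbing the final normalization of the global scalar.

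\smallskip

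The hard part will be exactly the cases in which no invariant one–dimensional summand with an invariant complement exists, that is, repeated eigenvalues and nontrivial Jordan structure. Here the obstruction is sharp and instructive: by the classical criterion that a linear map is a product of two involutions precisely when it is conjugate to its inverse, a single Jordan block $J_m(\lambda)$ is reversible, hence such a product, \emph{only} when $\lambda=\pm1$, since $J_m(\lambda)^{-1}\sim J_m(\lambda^{-1})$. Moreover the scalar freedom of $\mathrm{PGL}$ is global, so one cannot rescale the blocks of a Jordan form independently to force reversibility; this is the real source of the loss compared with the sharper ``four involutions in $\mathrm{SL}$'' results. I would handle these degenerate blocks by first applying one involution to split the block space against a suitable transverse hyperplane (breaking a single Jordan block into a point plus a smaller invariant piece) before recursing, and by carrying the determinant/scalar as an explicit datum through the induction, fixing it at the end with the two involutions held in reserve. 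Verifying that this peeling can always be realized by genuine involutions of $\mathbb{P}^n_\mathbb{C}$, and that the running count never exceeds $2(n+1)$, is the technical heart of the argument.
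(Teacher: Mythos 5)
The first half of your argument is sound and is in substance what the paper does: the paper also reduces to the statement that every nontrivial homography is a product of at most two involutions of $\mathrm{PGL}(2,\mathbb{C})$ and then works coordinate by coordinate; it merely proves that lemma by a cross-ratio argument on three well-chosen points (Lemma \ref{lem:involution}) where you use normal forms, and it does not discuss the permutation factor of $\mathrm{Aut}\big((\mathbb{P}^1_\mathbb{C})^n\big)$, which you at least sketch via the twisted swaps.

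The second half has a genuine gap, and you have located it yourself: the ``peeling lemma'' is never proved, and that is exactly where the difficulty lives. Two concrete problems. First, even in the favourable case where $A$ is block diagonal, say $\mathrm{diag}(\mu,A')$ with respect to $\langle v\rangle\oplus W$, writing $[A']$ as a product of involutions of $\mathrm{PGL}(n,\mathbb{C})$ and extending each by $\pm1$ on $\langle v\rangle$ leaves over a factor $[\mathrm{diag}(\mu,1,\ldots,1)]$; for $n\geq 2$ and $\mu^2\neq 1$ this class is not conjugate to its inverse in $\mathrm{PGL}(n+1,\mathbb{C})$ (compare the eigenvalue multisets $\{\mu,1,\ldots,1\}$ and $\{\mu^{-1},1,\ldots,1\}$ up to a common scalar), hence is not a product of two involutions, so the ``two involutions held in reserve'' cannot absorb the accumulated scalar data at the end. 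Second, the Jordan case is not merely technical: as you observe, $J_m(\lambda)$ is reversible only for $\lambda=\pm1$ and the scalar freedom of $\mathrm{PGL}$ is global, so the induction as set up does not visibly close, and no mechanism is given for ``splitting a Jordan block with one involution.'' The paper sidesteps all of this by a much shorter route (Lemma \ref{lem:transvection}): every element of $\mathrm{SL}(m,\Bbbk)$ is a product of at most $m+1$ transvections, each transvection is a product of two involutions, and $\mathrm{PGL}(m,\mathbb{C})=\mathrm{PSL}(m,\mathbb{C})$ since $\mathbb{C}$ is algebraically closed; this yields the linear bound with no case analysis on Jordan type. As written, your proof of the second bullet is not complete; either prove the peeling lemma with an explicit accounting of the leftover scalar, or replace the induction by the transvection decomposition.
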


Since any element of 
\[
\mathrm{G}_n(\mathbb{C})=\langle\sigma_n=\Big(\prod_{\stackrel{i=0}{i\not=0}}^{n}z_i:\prod_{\stackrel{i=0}{i\not=1}}^{n}z_i:\ldots:\prod_{\stackrel{i=0}{i\not=n}}^{n}z_i\Big),\,\mathrm{Aut}(\mathbb{P}^n_\mathbb{C})\rangle
\] 
can be written as a composition of conjugate involutions (\cite{Deserti:reg}) one gets 
that: 

\begin{theoalph}
Let $n\geq 3$.
Any element of the normal subgroup generated by $\mathrm{G}_n(\mathbb{C})$ in 
$\mathrm{Bir}(\mathbb{P}^n_\mathbb{C})$ can be written as a composition of involutions
of $\mathbb{P}^n_\mathbb{C}$.
\end{theoalph}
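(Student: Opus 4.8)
The plan is to reduce the assertion to the quoted fact (\cite{Deserti:reg}) that every element of $\mathrm{G}_n(\mathbb{C})$ is a composition of involutions of $\mathbb{P}^n_\mathbb{C}$, and then to exploit the elementary observation that a conjugate of an involution is again an involution. Concretely, write $\mathrm{N}$ for the normal subgroup generated by $\mathrm{G}_n(\mathbb{C})$ in $\mathrm{Bir}(\mathbb{P}^n_\mathbb{C})$. By definition $\mathrm{N}$ is generated by the elements $\psi\circ g\circ\psi^{-1}$ with $g\in\mathrm{G}_n(\mathbb{C})$ and $\psi\in\mathrm{Bir}(\mathbb{P}^n_\mathbb{C})$, so that a typical element of $\mathrm{N}$ has the form
\[
\Psi=\big(\psi_1\circ g_1\circ\psi_1^{-1}\big)\circ\big(\psi_2\circ g_2\circ\psi_2^{-1}\big)\circ\ldots\circ\big(\psi_k\circ g_k\circ\psi_k^{-1}\big)
\]
with $g_j\in\mathrm{G}_n(\mathbb{C})$ and $\psi_j\in\mathrm{Bir}(\mathbb{P}^n_\mathbb{C})$.

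Next I would treat each factor separately. Fixing $j$ and applying \cite{Deserti:reg}, I write $g_j=\tau_{j,1}\circ\tau_{j,2}\circ\ldots\circ\tau_{j,m_j}$ as a composition of involutions of $\mathbb{P}^n_\mathbb{C}$. Since conjugation is an automorphism of $\mathrm{Bir}(\mathbb{P}^n_\mathbb{C})$ it distributes over composition, whence
\[
\psi_j\circ g_j\circ\psi_j^{-1}=\big(\psi_j\circ\tau_{j,1}\circ\psi_j^{-1}\big)\circ\big(\psi_j\circ\tau_{j,2}\circ\psi_j^{-1}\big)\circ\ldots\circ\big(\psi_j\circ\tau_{j,m_j}\circ\psi_j^{-1}\big).
\]
Each factor $\psi_j\circ\tau_{j,i}\circ\psi_j^{-1}$ is an involution, for if $\tau_{j,i}^2=\mathrm{id}$ then $\big(\psi_j\circ\tau_{j,i}\circ\psi_j^{-1}\big)^2=\psi_j\circ\tau_{j,i}^2\circ\psi_j^{-1}=\mathrm{id}$. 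Concatenating these expressions for $j=1,\ldots,k$ exhibits $\Psi$ as a composition of involutions of $\mathbb{P}^n_\mathbb{C}$, which is exactly the claim.

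The only substantive input is the cited statement \cite{Deserti:reg}; once it is in hand the theorem is a purely formal consequence, and no genuine obstacle remains. The one point worth recording is that the generators of the normal closure may be taken in the symmetric form $\psi\circ g\circ\psi^{-1}$ rather than also allowing the inverses $\psi\circ g^{-1}\circ\psi^{-1}$: this is immediate since $\mathrm{G}_n(\mathbb{C})$ is a subgroup, so $g^{-1}\in\mathrm{G}_n(\mathbb{C})$, and the family of such conjugates is already stable under inversion. Consequently the hypothesis $n\geq 3$ is not used in the argument itself; it merely places us in the range where $\mathrm{G}_n(\mathbb{C})$ is a proper subgroup of $\mathrm{Bir}(\mathbb{P}^n_\mathbb{C})$, so that the statement describes a nontrivial normal subgroup rather than the whole Cremona group.
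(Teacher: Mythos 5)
Your proof is correct and is exactly the argument the paper leaves implicit: quote Proposition \ref{pro:Gn} (from \cite{Deserti:reg}) to write each element of $\mathrm{G}_n(\mathbb{C})$ as a product of involutions, and use that conjugation preserves involutions and distributes over composition. Nothing differs in substance, and your side remarks (closure of the generating set under inversion, the role of $n\geq 3$) are accurate.
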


Furthermore one can give an upper bound of 
$\mathfrak{n}\big(\varphi,\mathrm{Bir}(\mathbb{P}^n_\mathbb{C})\big)$ when $\varphi$
belongs to the subgroup of tame automorphisms of $\mathbb{C}^n$:

\begin{theoalph}
Let $n\geq 3$.
Let $\varphi$ be a tame automorphism of $\mathbb{C}^n$ of degree $d$.
Then $\varphi$ can be written as a composition of involutions of $\mathbb{P}^n_\mathbb{C}$.
Moreover,
\begin{itemize}
\item if $\varphi$ is affine, then 
$\mathfrak{n}\big(\varphi,\mathrm{Aut}(\mathbb{P}^n_\mathbb{C})\big)\leq 2n+4$;

\item if $\varphi$ is elementary, then 
$\mathfrak{n}\big(\varphi,\mathrm{Bir}(\mathbb{P}^n_\mathbb{C})\big)\leq 2n+10$;

\item otherwise 
$\mathfrak{n}\big(\varphi,\mathrm{Bir}(\mathbb{P}^n_\mathbb{C})\big)\leq \frac{d}{4}(2n+7)+10n+32$.
\end{itemize}
\end{theoalph}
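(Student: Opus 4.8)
The plan is to reduce to the generators of the tame group and to exploit one elementary observation repeatedly: if a birational self map $g$ is reversed by an involution $s$, meaning $sgs=g^{-1}$, then $g$ is itself a product of two involutions, because $(gs)(gs)=g(sgs)=g\,g^{-1}=\mathrm{id}$ shows that $gs$ is an involution and $g=(gs)\,s$. The second ingredient is the bound $\mathfrak{n}\big(\cdot,\mathrm{Aut}(\mathbb{P}^n_\mathbb{C})\big)\le 2(n+1)$ of the Proposition above, which disposes of every linear piece. Since the tame group is generated by the affine group and the elementary maps, it suffices to treat these two families and then to sum over a factorization of $\varphi$.

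\emph{Affine maps.} Write $a=\ell\circ t$ with $\ell\in\mathrm{GL}(n,\mathbb{C})$ and $t\colon x\mapsto x+b$ a translation. The translation is reversed by the linear involution $-\mathrm{id}$, so $t$ is a product of two involutions; the linear part $\ell$ costs at most $2(n+1)$ by the Proposition. This gives $\mathfrak{n}\big(a,\mathrm{Aut}(\mathbb{P}^n_\mathbb{C})\big)\le 2n+4$.

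\emph{Elementary maps.} Up to a coordinate permutation (a linear involution), an elementary map is $e\colon x_1\mapsto\alpha x_1+P(x_2,\dots,x_n)$ with the remaining coordinates fixed. Factor $e=d\circ\tau$ with $d\colon x_1\mapsto\alpha x_1$ diagonal and $\tau\colon x_1\mapsto x_1+\alpha^{-1}P$ a pure shear. The shear is reversed by the reflection $s\colon x_1\mapsto -x_1$, hence equals a product of two involutions $(\tau s)\,s$ of $\mathbb{P}^n_\mathbb{C}$; here $\tau s$ is a genuinely nonlinear involution when $\deg P\ge 2$, which is why one counts inside $\mathrm{Bir}(\mathbb{P}^n_\mathbb{C})$. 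Combining the two shear involutions, the diagonal map (at most $2(n+1)$ by the Proposition), and the conjugating permutation yields $\mathfrak{n}\big(e,\mathrm{Bir}(\mathbb{P}^n_\mathbb{C})\big)\le 2n+10$.

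\emph{General tame maps and the main obstacle.} For $\varphi$ of degree $d$ neither affine nor elementary, one writes $\varphi=a_0\circ e_1\circ a_1\circ\cdots\circ e_k\circ a_k$, an alternating word in affine maps $a_i$ and elementary maps $e_j$ provided by the structure of the tame group, and adds up the previous estimates. The crux is a bound, linear in $d$, on the number of elementary factors appearing (the degrees of the individual shears are irrelevant, since each shear costs only two involutions); together with the per factor bounds $2n+10$ and $2n+4$ and the two terminal affine maps absorbed into the additive constant, this produces $\mathfrak{n}\big(\varphi,\mathrm{Bir}(\mathbb{P}^n_\mathbb{C})\big)\le\frac{d}{4}(2n+7)+10n+32$. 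I expect this last step to be the hard part: controlling the length of a reduced affine/elementary factorization by the degree is delicate for $n\ge 3$, where the tame group is not an amalgamated product and one must appeal to the finer degree theory (in the spirit of Shestakov--Umirbaev); the remaining work is bookkeeping, namely checking that every factor is a true involution of $\mathbb{P}^n_\mathbb{C}$ and tracking the constants so as to land exactly on the stated bounds.
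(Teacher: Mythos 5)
Your treatment of the first two bullets is essentially sound and in places cheaper than the paper's: the reversal trick $g=(gs)s$ handles the translation and the polynomial shear with two involutions each, whereas the paper routes the shear through its lemma on $\mathrm{PGL}(2,\mathbb{C}[z_1,\ldots,z_{n-1}])$ at a cost of $8$. (Two small cautions: the involutions produced by the transvection argument for the linear part are involutions only in $\mathrm{PGL}$, so you should work in $\mathrm{PGL}(n+1,\mathbb{C})$ throughout rather than in $\mathrm{GL}(n,\mathbb{C})$ --- this is how the paper reaches $2n+4$ for affine maps directly; and the ``elementary'' maps of the theorem are not single-coordinate shears but maps of the form $\big(\alpha z_0+p(z_1),\,\text{affine in }z_1,\ldots,z_{n-1}\big)$, which is why the bound is $2n+10$ and not something smaller.)

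The genuine gap is exactly the step you flag and then leave open: the linear-in-$d$ bound on the number of nonaffine factors. Without it the third estimate is unproved, and gesturing at Shestakov--Umirbaev degree theory does not supply it. The paper's route is different and elementary. It introduces two explicit subgroups $\mathrm{H}_1,\mathrm{H}_2\subset\mathrm{Tame}_n$, with $\mathrm{H}_2$ affine and $\mathrm{H}_1$ consisting of maps $\big(\alpha z_0+p(z_1),\,\text{affine in }z_1,\ldots,z_{n-1}\big)$ --- crucially, the polynomial part depends on $z_1$ \emph{only}. Adapting Friedland--Milnor's argument, it proves that the degree of a reduced word in $\mathrm{H}_1\ast_{\mathrm{H}_1\cap\mathrm{H}_2}\mathrm{H}_2$ is the product of the degrees of its factors; since every nonaffine factor has degree $\geq 2$, this forces the number $k$ of nonaffine factors to satisfy $k\leq\frac{d}{8}+2$ (as in the dimension-$2$ section), and the bookkeeping $(k+1)(2n+4)+k(2n+10)$ yields $\frac{d}{4}(2n+7)+10n+32$. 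That every tame automorphism is such a reduced word follows from $\mathrm{Tame}_n=\langle\mathrm{A}_n,(z_0+z_1^2,z_1,\ldots,z_{n-1})\rangle$ and the fact that $\langle\mathrm{H}_1,\mathrm{H}_2\rangle$ contains both generators; the failure of $\mathrm{Tame}_n$ to be an amalgam of $\mathrm{A}_n$ and $\mathrm{E}_n$ is thereby sidestepped. Your factorization into arbitrary affine and arbitrary elementary (triangular) factors does not have the structure needed for this multiplicativity, so the missing step cannot be repaired within your setup as written.
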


Let us recall (\emph{see} \cite{PanSimis}) that the Jonquières subgroup 
$\mathrm{J}_{\smallO{}}(1,\mathbb{P}^3_\mathbb{C})$ of $\mathrm{Bir}(\mathbb{P}^3_\mathbb{C})$ 
is given in the affine chart $z_3=1$ by 
\[
\left\{ 
\varphi=\big(\varphi_0(z_0,z_1,z_2),\psi(z_1,z_2)\big)\,\big\vert\,\varphi_0\in\mathrm{PGL}(2,\mathbb{C}[z_1,z_2]),\, \psi\in\mathrm{Bir}(\mathbb{P}^2_\mathbb{C})
\right\}.
\]
Denote by $\mathrm{Mon}(n,\mathbb{C})$ the group of monomial maps of 
$\mathbb{P}^n_\mathbb{C}$, and finally set
\begin{small}
\[
\mathrm{J}_n=\mathrm{PGL}(2,\mathbb{C}(z_1,z_2,\ldots,z_{n-1}))\times\mathrm{PGL}(2,\mathbb{C}(z_2,z_3,\ldots,z_{n-1}))\times\ldots\times\mathrm{PGL}(2,\mathbb{C}(z_{n-1}))\times\mathrm{PGL}(2,\mathbb{C})
\subset\mathrm{Bir}(\mathbb{P}^n_\mathbb{C}).
\]
\end{small}

\begin{theoalph} 
Assume that $2\leq\ell\leq 4$, and $n\geq 3$.
\begin{itemize}
\item If $\varphi\in\mathrm{Bir}(\mathbb{P}^3_\mathbb{C})$ is of bidegree 
$(2,\ell)$, then $\varphi$ can be written as a composition of involutions of 
$\mathbb{P}^3_\mathbb{C}$, and 
$\mathfrak{n}\big(\varphi,\mathrm{Bir}(\mathbb{P}^3_\mathbb{C})\big)\leq 9+7\ell$.

\item Any element $\varphi$ of $\mathrm{J}_{\smallO{}}(1;\mathbb{P}^3_\mathbb{C})$ 
of degree $d$ can be written as a composition of involutions of 
$\mathbb{P}^3_\mathbb{C}$, and 
$\mathfrak{n}\big(\varphi,\mathrm{Bir}(\mathbb{P}^3_\mathbb{C})\big)\leq 10d+6$.

\item If $\varphi$ belongs to $\mathrm{Mon}(n,\mathbb{C})$, then 
$\varphi$ can be written as a composition of involutions of~$\mathbb{P}^n_\mathbb{C}$, 
and $\mathfrak{n}\big(\varphi,\mathrm{Mon}(n,\mathbb{C})\big)\leq 3n+9$.

\item Any element $\varphi$ of $\mathrm{J}_n$ can be written as a composition of 
involutions of $\mathbb{P}^n_\mathbb{C}$, and 
$\mathfrak{n}\big(\varphi,\mathrm{J}_n\big)\leq 4(2n-1)$.
\end{itemize}
\end{theoalph}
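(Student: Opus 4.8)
The plan is to handle all four items with one common piece of bookkeeping and then feed in whatever structure theory is available for each family. Throughout I will use four elementary devices. The \emph{conjugation principle}: if $\iota$ is an involution and $g$ any birational map, then $g\iota g^{-1}$ is again an involution. The \emph{telescoping identity}: if $\varphi=A_0\tau_1A_1\tau_2\cdots\tau_kA_k$ with each $\tau_i$ an involution and each $A_i$ in an ambient group $\mathrm{H}$, then, writing $P_i=A_0A_1\cdots A_{i-1}$, one has $\varphi=(P_1\tau_1P_1^{-1})\cdots(P_k\tau_kP_k^{-1})\,L$ with $L=A_0A_1\cdots A_k\in\mathrm{H}$; thus $\varphi$ is a product of $k$ involutions times a single residual element $L$. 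That residual is absorbed by Proposition C (an automorphism of $\mathbb{P}^n_\mathbb{C}$ costs at most $2(n+1)$ involutions, one of $(\mathbb{P}^1_\mathbb{C})^n$ at most $2n$). Finally, an \emph{extension principle}: a Cremona involution of $\mathbb{P}^2_\mathbb{C}$ in the variables $(z_1,z_2)$ acting trivially on $z_0$, or a M\"obius involution of one coordinate with coefficients in the remaining ones, is still an involution of the big space.

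For the structured items I reduce to one coordinate or one generator at a time. For $\mathrm{J}_n$ I exploit that it is a direct product of factors $\mathrm{PGL}(2,K_k)$ with $K_k=\mathbb{C}(z_{k+1},\ldots,z_{n-1})$, the $k$-th acting as a M\"obius map in $z_k$ only, so $\varphi=\varphi^{(0)}\circ\cdots\circ\varphi^{(n-1)}$ and it suffices to write each $\varphi^{(k)}\in\mathrm{PGL}(2,K_k)$ as a bounded product of involutions and extend them coordinatewise. The needed input is that every element of $\mathrm{PGL}(2,K)$ is a bounded product of involutions; summing over the $n$ coordinates, with the last factor $\mathrm{PGL}(2,\mathbb{C})$ the cheapest, yields $4(2n-1)$. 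For $\mathrm{Mon}(n,\mathbb{C})\cong(\mathbb{C}^*)^n\rtimes\mathrm{GL}(n,\mathbb{Z})$ I split a monomial map into its torus part $d$ and its exponent part $M$. The torus part costs two monomial involutions via $d^2=(d\,\sigma_n\,d^{-1})\,\sigma_n$ (using $\sigma_n\,d\,\sigma_n=d^{-1}$) and surjectivity of squaring on $(\mathbb{C}^*)^n$. The exponent part is a product of monomial involutions: permutation-involutions and $\sigma_n\leftrightarrow-\mathrm{Id}$ are involutions, and a transvection is a product of two, e.g. $\left(\begin{smallmatrix}1&1\\0&1\end{smallmatrix}\right)=\left(\begin{smallmatrix}1&-1\\0&-1\end{smallmatrix}\right)\left(\begin{smallmatrix}1&0\\0&-1\end{smallmatrix}\right)$ in the relevant block; realizing an arbitrary $M$ with a number of involutions linear in $n$, plus the torus cost and a fixed overhead, gives $3n+9$.

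For the geometric items I use the fibered or classified structure. For $\mathrm{J}_{\mathcal{O}}(1;\mathbb{P}^3_\mathbb{C})$ I use $\varphi=(\varphi_0(z_0,z_1,z_2),\psi(z_1,z_2))$ and factor it as a fiberwise M\"obius map over a base map lifting $\psi\in\mathrm{Bir}(\mathbb{P}^2_\mathbb{C})$. Theorem A writes $\psi$ with at most $10\deg(\psi)-2$ involutions of $\mathbb{P}^2_\mathbb{C}$, lifted to $\mathbb{P}^3_\mathbb{C}$, and once one checks $\deg\psi\le d$ this is at most $10d-2$; the fiberwise $\varphi_0\in\mathrm{PGL}(2,\mathbb{C}(z_1,z_2))$ together with the single telescoping residual is arranged into one automorphism of $\mathbb{P}^3_\mathbb{C}$ (cost $\le 8$), accounting for the passage from $-2$ to $+6$ and giving $10d+6$. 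For the quadratic maps (bidegree $(2,\ell)$, which for $\ell\in\{2,3,4\}$ exhausts all degree-two Cremona maps of $\mathbb{P}^3_\mathbb{C}$ since $\deg\varphi^{-1}\le(\deg\varphi)^2=4$) I invoke the classification of quadratic Cremona transformations of $\mathbb{P}^3_\mathbb{C}$ by base locus: in each type $\varphi=A\circ\rho\circ B$ with $A,B\in\mathrm{PGL}(4,\mathbb{C})$ and $\rho$ a normal form, I exhibit each $\rho$ as a short product of standard quadratic involutions of $\mathbb{P}^3_\mathbb{C}$ and telescope the linear factors into one residual automorphism absorbed by Proposition C; the number of standard involutions in the normal form grows with the type, producing the linear dependence $9+7\ell$.

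The main obstacle is always the non-formal input. For $\mathrm{Mon}(n,\mathbb{C})$ it is the uniform, linear-in-$n$ bound on the number of involutions needed for an arbitrary element of $\mathrm{GL}(n,\mathbb{Z})$, since a naive transvection decomposition is unbounded in the size of the matrix; this quantitative involution-length statement is the crux. For $\mathrm{J}_n$ the delicate point is controlling the involution length of $\mathrm{PGL}(2,K)$ over the non-closed fields $K_k$ and checking that each factor descends to a genuine involution of $\mathbb{P}^n_\mathbb{C}$. For the quadratic case the labor is the case-by-case classification and the explicit normal-form factorizations with careful tracking of constants. In every item the remainder — the telescoping, the extension of lower-dimensional involutions, and the final absorption of one automorphism via Proposition C — is routine bookkeeping.
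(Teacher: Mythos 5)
Your architecture matches the paper's on two of the four items. The de Jonquières item is handled exactly as in the paper: write $\varphi\in\mathrm{J}_{\smallO{}}(1;\mathbb{P}^3_\mathbb{C})$ as $\big(z_0,\psi(z_1,z_2)\big)\circ\big(\varphi_0,z_1,z_2\big)$, spend $\leq 10d-2$ involutions on $\psi$ via the planar theorem and $\leq 8$ on $\varphi_0$; the $\mathrm{J}_n$ item is likewise done coordinatewise with each $\mathrm{PGL}(2,K)$ factor costing $\leq 8$. One correction there: the cost $8$ for $\varphi_0\in\mathrm{PGL}(2,\mathbb{C}[z_1,z_2])$ does not come from "arranging it into one automorphism of $\mathbb{P}^3_\mathbb{C}$" (it is not linear); it comes from clearing the determinant by a scaling (itself two involutions) and writing the resulting element of $\mathrm{SL}(2,\mathbb{C}(z_1,z_2))$ as at most three transvections, each a product of two involutions (Lemma \ref{lem:transvection}). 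That same lemma is also what disposes of your worry about involution lengths in $\mathrm{PGL}(2,K)$ over non-closed fields.

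The other two items have genuine gaps. For $\mathrm{Mon}(n,\mathbb{C})$ you correctly identify that everything hinges on a bound, depending only on $n$, for the involution length of an arbitrary element of $\mathrm{GL}(n,\mathbb{Z})$, and you correctly note that elementary-matrix reductions cannot give one (the number of Euclidean steps depends on the entries) — but you then leave exactly that statement unproved, so your item collapses. The paper's proof of this item \emph{is} the citation of Ishibashi's theorem, which asserts that every element of $\mathrm{GL}(n,\mathbb{Z})$, $n\geq 3$, is a product of at most $3n+9$ involutions of $\mathrm{GL}(n,\mathbb{Z})$; note also that the paper takes $\mathrm{Mon}(n,\mathbb{C})$ to be $\mathrm{GL}(n,\mathbb{Z})$ itself, so no torus factor (and no extra cost for one) arises. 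For bidegree $(2,\ell)$ your outline — Pan--Ronga--Vust normal forms up to left-right conjugacy, factorization of each normal form, absorption of the two linear factors via the $\mathrm{PGL}(4,\mathbb{C})$ bound — is the paper's outline, but the entire content of the proof is the explicit factorization of each of the $8+11+11$ normal forms into involutions and linear maps, from which the worst-case totals $23$, $30$, $37$ are read off. Nothing forces a quadratic normal form to be a product of involutions a priori, and $9+7\ell$ is not derived from any structural principle: it is simply the maximum over those explicit lists. Asserting that "the number of standard involutions grows with the type" proves neither the decomposability nor the bound.
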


If $\mathrm{H}$ is a subgroup of $\mathrm{G}$ let us denote by 
$\mathrm{N}(\mathrm{H};\mathrm{G})$ the normal subgroup generated by $\mathrm{H}$ in 
$\mathrm{G}$.

\begin{coralph}
Any element of 
\begin{small}
\begin{eqnarray*}
& & \langle\mathrm{N}\big(\mathrm{PGL}(4,\mathbb{C});\mathrm{Bir}(\mathbb{P}^3_\mathbb{C})\big),\,\mathrm{N}\big(\mathrm{J}_{\smallO{}}(1;\mathbb{P}^3_\mathbb{C});\mathrm{Bir}(\mathbb{P}^3_\mathbb{C})\big),\,\mathrm{N}\big(\mathrm{Mon}(3,\mathbb{C});\mathrm{Bir}(\mathbb{P}^3_\mathbb{C})\big),\,\\
& & \quad \mathrm{N}\big(\mathrm{G}_3(\mathbb{C});\mathrm{Bir}(\mathbb{P}^3_\mathbb{C})\big), \mathrm{N}\big(\langle\varphi_1,\ldots,\varphi_k\rangle;\mathrm{Bir}(\mathbb{P}^3_\mathbb{C})\big)\,\vert\,\varphi_i\in\mathrm{Bir}(\mathbb{P}^3_\mathbb{C})\text{ of bidegree $(2,\ell)$, $2\leq \ell\leq 4$}\rangle
\end{eqnarray*}
\end{small}
can be written as a composition of involutions of $\mathbb{P}^3_\mathbb{C}$.

\smallskip

For any $n\geq 4$, any element of 
\begin{small}
\[
\langle\mathrm{N}\big(\mathrm{PGL}(n+1,\mathbb{C});\mathrm{Bir}(\mathbb{P}^n_\mathbb{C})\big),\,\mathrm{N}\big(\mathrm{J}_n;\mathrm{Bir}(\mathbb{P}^n_\mathbb{C})\big),\,\mathrm{N}\big(\mathrm{Mon}(n,\mathbb{C});\mathrm{Bir}(\mathbb{P}^n_\mathbb{C})\big),\,\mathrm{N}\big(\mathrm{G}_n(\mathbb{C});\mathrm{Bir}(\mathbb{P}^n_\mathbb{C})\big)\rangle
\]
\end{small}
can be written as a composition of involutions of $\mathbb{P}^n_\mathbb{C}$.
\end{coralph}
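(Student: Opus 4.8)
The plan is to recognize that the set of all finite compositions of involutions is itself a \emph{normal} subgroup of $\mathrm{Bir}(\mathbb{P}^n_\mathbb{C})$, and then to observe that every generating piece appearing in the statement has already been placed inside this subgroup by the results established above; the corollary then becomes essentially formal.

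First I would introduce the set $\mathrm{Inv}(\mathbb{P}^n_\mathbb{C})$ of all birational self maps of $\mathbb{P}^n_\mathbb{C}$ that can be written as a finite composition of involutions, and check that it is a subgroup of $\mathrm{Bir}(\mathbb{P}^n_\mathbb{C})$. It contains the identity (write $\iota\circ\iota$ for any involution $\iota$); it is closed under composition (concatenate two words in involutions); and it is closed under inverse, since the inverse of $\iota_1\circ\cdots\circ\iota_k$ is $\iota_k\circ\cdots\circ\iota_1$, again a product of involutions. The decisive step is normality: if $\iota$ is an involution and $g\in\mathrm{Bir}(\mathbb{P}^n_\mathbb{C})$ is arbitrary, then $g\circ\iota\circ g^{-1}$ is again an involution, and distributing the conjugation over a word gives $g\circ(\iota_1\circ\cdots\circ\iota_k)\circ g^{-1}=(g\iota_1 g^{-1})\circ\cdots\circ(g\iota_k g^{-1})$, which is still a composition of involutions. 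Hence $\mathrm{Inv}(\mathbb{P}^n_\mathbb{C})\trianglelefteq\mathrm{Bir}(\mathbb{P}^n_\mathbb{C})$.

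Next I would feed in the earlier results to place each generator inside $\mathrm{Inv}(\mathbb{P}^n_\mathbb{C})$. Proposition~C gives $\mathrm{PGL}(n+1,\mathbb{C})\subset\mathrm{Inv}(\mathbb{P}^n_\mathbb{C})$; Theorem~F gives that every element of $\mathrm{J}_{\smallO{}}(1;\mathbb{P}^3_\mathbb{C})$ (resp.\ of $\mathrm{J}_n$), every monomial map in $\mathrm{Mon}(n,\mathbb{C})$, and every map of bidegree $(2,\ell)$ with $2\leq\ell\leq 4$ lies in $\mathrm{Inv}(\mathbb{P}^n_\mathbb{C})$, so in particular the $\varphi_i$ and hence $\langle\varphi_1,\ldots,\varphi_k\rangle$ do; and Theorem~D gives that the whole normal subgroup $\mathrm{N}\big(\mathrm{G}_n(\mathbb{C});\mathrm{Bir}(\mathbb{P}^n_\mathbb{C})\big)$ lies in $\mathrm{Inv}(\mathbb{P}^n_\mathbb{C})$. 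Because $\mathrm{Inv}(\mathbb{P}^n_\mathbb{C})$ is a normal subgroup of $\mathrm{Bir}(\mathbb{P}^n_\mathbb{C})$ containing each subgroup $\mathrm{H}$ in question, it also contains the smallest normal subgroup $\mathrm{N}\big(\mathrm{H};\mathrm{Bir}(\mathbb{P}^n_\mathbb{C})\big)$ containing $\mathrm{H}$; thus each normal subgroup displayed in the statement is contained in $\mathrm{Inv}(\mathbb{P}^n_\mathbb{C})$.

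Finally, since $\mathrm{Inv}(\mathbb{P}^n_\mathbb{C})$ is a subgroup containing every one of these normal subgroups, it contains the subgroup they generate, which is precisely the group appearing in each part of the corollary; hence every element of that group is a composition of involutions. The case $n=3$ and the case $n\geq 4$ are handled by the very same argument, differing only in which prior theorems supply the inclusions. The only genuinely substantive point is the normality of $\mathrm{Inv}(\mathbb{P}^n_\mathbb{C})$, together with correctly reading off each inclusion from Propositions~C and Theorems~D and~F; I expect no computational obstacle, the argument being formal once those inclusions are collected.
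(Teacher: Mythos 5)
Your argument is correct and is exactly the one the paper relies on: the set of finite compositions of involutions is a normal subgroup of $\mathrm{Bir}(\mathbb{P}^n_\mathbb{C})$ (this is the observation made in the paper's introductory Remark), so it absorbs the normal closure of each generating subgroup, each of which was placed inside it by Proposition~C and Theorems~D and~F. Nothing is missing.
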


\begin{remark}
An other motivation for studying birational maps of $\mathbb{P}^n_\mathbb{C}$
that can be written as a composition of involutions is the following. The 
group of birational maps of $\mathbb{P}^n_\mathbb{C}$ that can be written
as a composition of involutions is a normal subgroup of 
$\mathrm{Bir}(\mathbb{P}^n_\mathbb{C})$. So if the answer to Dolgachev Question
is no, we can give a negative answer to the following question asked by 
Mumford (\cite{Mumford}): is $\mathrm{Bir}(\mathbb{P}^n_\mathbb{C})$ 
a simple group~?
\end{remark}

\subsection*{Acknowledgments} I would like to thank D. Cerveau for his constant 
availability and kindness. Thanks to S. Zimmermann for pointing out Proposition
\ref{Pro:Zimmermann}.

\section{Recalls and definitions}

\subsection{Polynomial automorphisms of $\mathbb{C}^n$}\label{subsec:polyaut}

A \emph{polynomial automorphism} $\varphi$ of $\mathbb{C}^n$ is a bijective map 
from $\mathbb{C}^n$ into itself of the type
\[
(z_0,z_1,\ldots,z_{n-1})\mapsto \big(\varphi_0(z_0,z_1,\ldots,z_{n-1}),\varphi_1(z_0,z_1,\ldots,z_{n-1}),\ldots,\varphi_{n-1}(z_0,z_1,\ldots,z_{n-1})\big)
\]
with $\varphi_i\in\mathbb{C}[z_0,z_1,\ldots,z_{n-1}]$. The set of polynomial 
automorphisms of $\mathbb{C}^n$ form a group denoted 
$\mathrm{Aut}(\mathbb{C}^n)$.

Let $\mathrm{A}_n$ be the group of affine automorphisms of $\mathbb{C}^n$, 
and let $\mathrm{E}_n$ be the group of elementary automorphisms of $\mathbb{C}^n$. 
In other words $\mathrm{A}_n$ is the semi-direct product of 
$\mathrm{GL}(n,\mathbb{C})$ with the commutative unipotent subgroup of translations. 
Furthermore $\mathrm{E}_n$ is formed with automorphisms 
$(\varphi_0,\varphi_1,\ldots,\varphi_{n-1})$ of $\mathbb{C}^n$ where 
\[
\varphi_i=\varphi_i(z_i,z_{i+1},\ldots,z_{n-1})
\]
depends only on $z_i$, $z_{i+1}$, $\ldots$, $z_{n-1}$. The 
subgroup $\mathrm{Tame}_n$ of $\mathrm{Aut}(\mathbb{C}^n)$, called the group of 
tame automorphisms of $\mathbb{C}^n$, is the group generated by $\mathrm{A}_n$ 
and $\mathrm{E}_n$. For $n=2$ one has $\mathrm{Tame}_2=\mathrm{Aut}(\mathbb{C}^2)$, 
more precisely:

\begin{theorem}[\cite{Jung}]\label{thm:jung}
The group $\mathrm{Aut}(\mathbb{C}^2)$ has a structure of amalgamated product
\[
\mathrm{Aut}(\mathbb{C}^2)=\mathrm{A}_2\ast_{\mathrm{S}_2}\mathrm{E}_2
\]
with $\mathrm{S}_2=\mathrm{A}_2\cap\mathrm{E}_2$.
\end{theorem}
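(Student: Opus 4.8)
The plan is to establish the two facts that together constitute an amalgamated product decomposition: that $\mathrm{A}_2$ and $\mathrm{E}_2$ generate $\mathrm{Aut}(\mathbb{C}^2)$, and that the only relations between them are those carried by $\mathrm{S}_2=\mathrm{A}_2\cap\mathrm{E}_2$. I attach to $\varphi=(\varphi_0,\varphi_1)$ the degree $\deg\varphi=\max(\deg\varphi_0,\deg\varphi_1)$, and I note at the outset that $\mathrm{S}_2$ is exactly the group of triangular affine maps $(az_0+bz_1+c,\,dz_1+e)$ with $ad\neq0$, so that both $\mathrm{A}_2\setminus\mathrm{S}_2$ and $\mathrm{E}_2\setminus\mathrm{S}_2$ are nonempty.

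For generation I would argue by induction on $d:=\deg\varphi$. If $d=1$ then $\varphi\in\mathrm{A}_2$ and there is nothing to do, so suppose $d\geq2$. Since $\varphi$ is an automorphism its Jacobian is a nonzero constant, whence the top-degree homogeneous parts $\bar\varphi_0,\bar\varphi_1$ satisfy $\mathrm{Jac}(\bar\varphi_0,\bar\varphi_1)=0$; in two variables this forces $\bar\varphi_0$ and $\bar\varphi_1$ to be scalar multiples of powers of a common homogeneous form. The decisive step is that one of $\deg\varphi_0$, $\deg\varphi_1$ divides the other, so that — say $\deg\varphi_1\mid\deg\varphi_0$ — one has $\bar\varphi_0=\lambda\,\bar\varphi_1^{\deg\varphi_0/\deg\varphi_1}$. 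Composing on the left with the elementary map $(z_0,z_1)\mapsto(z_0-\lambda z_1^{\deg\varphi_0/\deg\varphi_1},z_1)$ kills the leading term of the first coordinate and strictly lowers the degree, and the induction closes; this proves $\mathrm{Tame}_2=\mathrm{Aut}(\mathbb{C}^2)$. I stress that this divisibility genuinely uses the invertibility of $\varphi$ and not merely the constancy of its Jacobian — the latter hypothesis alone is the (open) Jacobian conjecture — and it is of Abhyankar–Moh–Suzuki type, obtained by viewing the level curves of $\varphi_1$ as polynomial embeddings of the line.

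It then remains to verify that $\mathrm{Aut}(\mathbb{C}^2)$ has no relations beyond those forced by $\mathrm{S}_2$, i.e. that a reduced alternating word $g_1g_2\cdots g_m$ with $m\geq2$ and the $g_i$ lying alternately in $\mathrm{A}_2\setminus\mathrm{S}_2$ and $\mathrm{E}_2\setminus\mathrm{S}_2$ is never the identity, and more precisely that distinct reduced words yield distinct maps. The natural way is to prove that the degree is multiplicative along reduced words, $\deg(g_1\cdots g_m)=\prod_i\deg(g_i)$, by inducting on $m$ and following the leading homogeneous part through each alternation. At each step one must rule out cancellation of the top-degree term, and this is exactly where the hypotheses $g_i\notin\mathrm{S}_2$ enter: a cancellation would require the leading form produced by an $\mathrm{E}_2\setminus\mathrm{S}_2$ factor to be annihilated by an adjacent $\mathrm{A}_2\setminus\mathrm{S}_2$ factor, which is excluded precisely because both factors avoid the common subgroup $\mathrm{S}_2$. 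A reduced word of length $m\geq2$ thus has degree $\geq2$ and cannot be the identity, while the multiplicativity formula lets one read the factors off the composite; this is precisely the normal-form criterion giving $\mathrm{Aut}(\mathbb{C}^2)=\mathrm{A}_2\ast_{\mathrm{S}_2}\mathrm{E}_2$.

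The two places where the real work sits are the degree divisibility in the generation step and the non-cancellation in the reduced-word step; I expect the latter to be the main obstacle, since controlling the leading terms through an arbitrary alternation of affine and elementary maps requires a careful and slightly delicate bookkeeping, and it is the step in which the condition that each factor avoids the amalgamated subgroup $\mathrm{S}_2$ must be exploited in an essential and non-formal way.
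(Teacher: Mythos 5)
This is a classical theorem that the paper does not prove: it is quoted verbatim from \cite{Jung} (with the amalgamated-product refinement due to van der Kulk), so there is no in-paper argument to compare yours against. What you have written is the standard Jung--van der Kulk strategy, and its architecture is correct: (i) generation of $\mathrm{Aut}(\mathbb{C}^2)$ by $\mathrm{A}_2$ and $\mathrm{E}_2$ via descent on the degree, using that the leading forms $\bar\varphi_0,\bar\varphi_1$ have vanishing Jacobian and that one component degree divides the other; (ii) the amalgam structure via multiplicativity of the degree along reduced alternating words, which is exactly the normal-form criterion (and is the statement the paper later invokes as \cite[Theorem 2.1]{FriedlandMilnor}). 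Your identification of $\mathrm{S}_2$ as the triangular affine maps is right, and you correctly observe that a reduced word of length $\geq 2$ contains an $\mathrm{E}_2\smallsetminus\mathrm{S}_2$ factor of degree $\geq 2$ and hence cannot be the identity.

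The gap is that the two load-bearing lemmas are named but not proved, and they are essentially the whole content of the theorem. Vanishing of $\mathrm{Jac}(\bar\varphi_0,\bar\varphi_1)$ does give that $\bar\varphi_0,\bar\varphi_1$ are powers of a common form, but by itself yields no divisibility of $\deg\varphi_0$ by $\deg\varphi_1$ (consider $h^2$ and $h^3$); the divisibility genuinely requires an Abhyankar--Moh--Suzuki-type input or an equivalent valuation-theoretic/geometric argument, and you only gesture at it. Likewise the non-cancellation of leading terms along a reduced word is asserted to follow from the factors avoiding $\mathrm{S}_2$, but the actual bookkeeping (tracking which coordinate carries the top degree after each affine factor, and the borderline case $\deg\varphi_0=\deg\varphi_1$ in the descent, where the correcting map is affine rather than elementary) is where the proof lives. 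So take this as a correct and well-organized skeleton of the standard proof rather than a proof; to make it complete you would need to supply the AMS-type divisibility lemma and carry out the leading-term induction of van der Kulk (or Friedland--Milnor) in detail.
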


Nevertheless $\mathrm{Tame}_3\not=\mathrm{Aut}(\mathbb{C}^3)$
(\emph{see} \cite{ShestakovUmirbaev}).

\subsection{Birational maps of $\mathbb{P}^n_\mathbb{C}$}

A \emph{rational self map of $\mathbb{P}^n_\mathbb{C}$} is a map of the type
\[
(z_0:z_1:\ldots:z_n)\dashrightarrow \big(\varphi_0(z_0,z_1,\ldots,z_n):\varphi_1(z_0,z_1,\ldots,z_n):\ldots:\varphi_n(z_0,z_1,\ldots,z_n)\big)
\]
where the $\varphi_i$'s denote homogeneous polynomials of the same degree 
without common factor (of positive degree).

A \emph{birational self map $\varphi$ of $\mathbb{P}^n_\mathbb{C}$} is a rational 
map of $\mathbb{P}^n_\mathbb{C}$ such that there exists a rational self map~$\psi$ 
of $\mathbb{P}^n_\mathbb{C}$ with the following property 
$\varphi\circ\psi=\psi\circ\varphi=\mathrm{id}$ where 
$\mathrm{id}\colon(z_0:z_1:\ldots:z_n)\dashrightarrow(z_0:z_1:\ldots:z_n)$.

The \emph{degree} of $\varphi\in\mathrm{Bir}(\mathbb{P}^n_\mathbb{C})$ is the 
degree of the $\varphi_i$'s. For $n=2$, one has $\deg \varphi=\deg\varphi^{-1}$; for
$n=3$ such an equality does not necessary hold, we thus speak about 
the \emph{bidegree} of~$\varphi$ which is $(\deg\varphi,\deg\varphi^{-1})$.

The group of birational self maps of $\mathbb{P}^n_\mathbb{C}$ is 
denoted $\mathrm{Bir}(\mathbb{P}^n_\mathbb{C})$ and called \emph{Cremona group}.

The groups $\mathrm{Aut}(\mathbb{P}^n_\mathbb{C})=\mathrm{PGL}(n+1,\mathbb{C})$ and 
$\mathrm{Aut}(\mathbb{C}^n)$ are subgroups of $\mathrm{Bir}(\mathbb{P}^n_\mathbb{C})$. 

Let us mention that contrary to $\mathrm{Aut}(\mathbb{C}^2)$ the 
Cremona group in dimension $2$ does not decompose as a non-trivial
amalgam (appendix of \cite{CantatLamy}).

\subsection{Birational involutions in dimension $2$}

Let us first describe some involutions:

\begin{itemize}
\item Consider an irreducible curve $\mathcal{C}$ of degree $\nu\geq 3$ 
with a unique singular point $p$; assume
furthermore that $p$ is an ordinary multiple point with multiplicity 
$\nu-2$. To $(\mathcal{C},p)$ we can associate a birational involution 
$\mathcal{I}_J$ which fixes pointwise $\mathcal{C}$ and preserves lines
through $p$ as follows. Let $m$ be a generic point of 
$\mathbb{P}^2_\mathbb{C}\smallsetminus\mathcal{C}$; let $r_m$, $q_m$ and 
$p$ be the intersections of the line $(mp)$ with $\mathcal{C}$; the 
point $\mathcal{I}_J(m)$ is defined by: the cross ratio of $m$,
$\mathcal{I}_J(m)$, $q_m$ and $r_m$ is equal to $-1$. The map 
$\mathcal{I}_J$ is a \emph{de Jonquières involution} of 
$\mathbb{P}^2_\mathbb{C}$. A birational involution is 
\emph{of de Jonquières type} if it is birationally conjugate 
to a de Jonquières involution of $\mathbb{P}^2_\mathbb{C}$.

\item Let $p_1$, $p_2$, $\ldots$, $p_8$ be eight points of 
$\mathbb{P}^2_\mathbb{C}$ in general position. Consider the set
of sextics $\mathcal{S}=\mathcal{S}(p_1,p_2,\ldots,p_8)$ with 
double points at $p_1$, $p_2$, $\ldots$, $p_8$. Take a 
point~$m$ in $\mathbb{P}^2_\mathbb{C}$. The pencil given by the 
elements of $\mathcal{S}$ having a double point at $m$ has a 
tenth base double point point $m'$. The involution which 
switches $m$ and $m'$ is a \emph{Bertini involution}. A 
birational involution is \emph{of Bertini type} if it is 
birationally conjugate to a Bertini involution.

\item Let $p_1$, $p_2$, $\ldots$, $p_7$ be seven points of 
$\mathbb{P}^2_\mathbb{C}$ in general position. Denote by $L$  
the linear system of cubics through the $p_i$'s. Consider a 
generic point $p$ in $\mathbb{P}^2_\mathbb{C}$ and define 
by~$L_p$ the pencil of elements of $L$ passing through $p$. 
The involution which switches $p$ and the ninth base-point 
of $L_p$ is a \emph{Geiser involution}. A birational 
involution is \emph{of Geiser type} if it is birationally
conjugate to a Geiser involution.
\end{itemize}

Birational involutions of $\mathbb{P}^2_\mathbb{C}$ have been classified:

\begin{theorem}[\cite{Bertini}]
A non-trivial birational involution of $\mathbb{P}^2_\mathbb{C}$ is 
either of de Jonquières type, or of Bertini type, or of Geiser type.
\end{theorem}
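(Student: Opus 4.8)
The plan is to recast the classification as a problem in equivariant birational geometry of rational surfaces. First I would \emph{regularize} the involution: given a non-trivial birational involution $\iota$ of $\mathbb{P}^2_\mathbb{C}$, resolving its base points in an $\iota$-equivariant way produces a smooth projective rational surface $S$ together with a birational morphism $S\to\mathbb{P}^2_\mathbb{C}$ on which $\iota$ lifts to a biregular automorphism $\tilde\iota$; since $\iota^2=\mathrm{id}$, the lift is again an involution. Thus $G:=\langle\tilde\iota\rangle\cong\mathbb{Z}/2\mathbb{Z}$ acts faithfully and biregularly on $S$, and birational conjugacy classes of involutions of $\mathbb{P}^2_\mathbb{C}$ correspond to $G$-equivariant birational classes of such pairs $(S,G)$.

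Next I would run the $G$-equivariant minimal model program on $(S,G)$, contracting $G$-orbits of pairwise disjoint $(-1)$-curves until I reach a $G$-minimal pair $(S',G)$, i.e.\ one with $\mathrm{Pic}(S')^{G}$ of minimal rank. By the Manin--Iskovskikh structure theorem for $G$-minimal rational surfaces, $S'$ is then either a del Pezzo surface with $\mathrm{Pic}(S')^{G}\cong\mathbb{Z}$, or it carries a $G$-equivariant conic-bundle morphism $S'\to\mathbb{P}^1_\mathbb{C}$ with relative $G$-Picard number one. Because $\tilde\iota$ has order two, its action on the rank-two invariant lattice in the conic-bundle case, and on the relevant root system of $(-1)$-curves in the del Pezzo case, is heavily constrained.

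Then I would identify the three named types. In the conic-bundle case the involution preserves the fibration; when it acts trivially on the base it is fiberwise an involution whose fixed locus is a bisection, the smooth model of a curve $\mathcal{C}$, and pushing the pencil of fibers down to $\mathbb{P}^2_\mathbb{C}$ recovers the pencil of lines through a point $p$ — precisely a de Jonqui\`eres involution. In the del Pezzo case I would invoke the (anti)canonical morphisms: $G$-minimality forces the anticanonical degree to be small, leaving two cases. In degree $2$, the linear system $|-K_{S'}|$ realizes $S'$ as a double cover of $\mathbb{P}^2_\mathbb{C}$ branched along a smooth quartic, and $\tilde\iota$ is the Galois involution, i.e.\ the Geiser involution; in degree $1$, the system $|-2K_{S'}|$ realizes $S'$ as a double cover of a quadric cone branched along a sextic, with $\tilde\iota$ the corresponding Galois involution, i.e.\ the Bertini involution.

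The main obstacle is the del Pezzo analysis. One must show that no $G$-minimal del Pezzo surface of degree $\ge 3$ admits a non-trivial minimal involution — so that only degrees $1$ and $2$ survive — and that in those two degrees the involution is forced, up to conjugacy, to be the Galois deck transformation rather than an unrelated automorphism. This rests on a careful study of the fixed locus of $\tilde\iota$: the smooth fixed curve is controlled by the holomorphic Lefschetz and topological Euler-characteristic constraints together with the invariant subspace of $\mathrm{Pic}(S')$, and it is this curve that pins down the preserved linear system and matches it with the branch quartic or sextic. A secondary subtlety, which also needs care, is the conic-bundle case: one must ensure that if the involution acts non-trivially on the base $\mathbb{P}^1_\mathbb{C}$ then the model either reduces further under the $G$-MMP or falls back into the del Pezzo case, so that the de Jonqui\`eres description is genuinely exhaustive.
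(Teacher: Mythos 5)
The paper does not prove this statement: it is recalled as a classical theorem and attributed to \cite{Bertini}, so there is no internal proof to compare against. Your sketch is the standard modern route to it --- regularize the involution, run the $G$-equivariant minimal model program, and invoke the Manin--Iskovskikh dichotomy (del Pezzo surface with $\mathrm{rk}\,\mathrm{Pic}(S')^{G}=1$ versus conic bundle) --- which is essentially the Bayle--Beauville / Calabri--Ciliberto proof. This is the right approach, and it is worth noting that the original argument of Bertini is known to be incomplete, so reconstructing the statement this way is not merely an alternative but the accepted rigorous proof.

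There is, however, one step in your plan that is wrong as stated and would derail the case analysis: the claim that no $G$-minimal del Pezzo surface of degree $\geq 3$ admits a non-trivial minimal involution. Counterexamples are immediate: $\mathbb{P}^2_{\mathbb{C}}$ itself (degree $9$, $\mathrm{Pic}\cong\mathbb{Z}$) with a linear involution, $\mathbb{P}^1_{\mathbb{C}}\times\mathbb{P}^1_{\mathbb{C}}$ (degree $8$) with the swap, whose invariant Picard lattice has rank one, and certain involutions of quartic del Pezzo surfaces. What one actually proves is that every minimal pair $(S',\tilde\iota)$ with $\deg S'\geq 3$ produces an involution birationally conjugate to a linear or de Jonqui\`eres one --- the key invariant being the normalized fixed curve of positive genus, which is empty or rational precisely in the linearizable cases --- so that only degrees $2$ and $1$ contribute the genuinely new Geiser and Bertini classes. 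Relatedly, be aware that the statement as given in the paper silently absorbs linear involutions into the de Jonqui\`eres type, even though the paper's definition of a de Jonqui\`eres involution starts from a curve of degree $\nu\geq 3$; your classification must account for this degenerate class explicitly rather than expect it to be excluded by minimality. With that correction the outline is sound, the remaining work being the fixed-locus and Lefschetz computations you already identify as the technical core.
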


\subsection{Birational involutions in higher dimension}

There are no classification in higher dimension; in \cite{Prokhorov:involution}
the author gives a first nice step toward a classification in dimension~$3$.

Let us give some examples:
\begin{itemize}
\item the involution 
\[
\sigma_n=\Big(\prod_{\stackrel{i=0}{i\not=0}}^{n}z_i:\prod_{\stackrel{i=0}{i\not=1}}^{n}z_i:\ldots:\prod_{\stackrel{i=0}{i\not=n}}^{n}z_i\Big)
\]

\item the involutions of $\mathrm{PGL}(n+1,\mathbb{C})$;

\item the involutions of $\mathrm{Mon}(n,\mathbb{C})$ induced by the 
involutions of $\mathrm{GL}(n,\mathbb{Z})$;

\item the de Jonquières involutions: consider a reduced hypersurface $H$
of degree $\nu$ in~$\mathbb{P}^n_\mathbb{C}$ that contains a linear 
subspace of points of multiplicity $\nu-2$. Assume that~$p$ is a 
singular point of $H$ of multiplicity $\nu-2$. Take a general point 
$m$ of~$H$. Denote by $\ell_p$ the line passing through $p$ and $m$.
The intersection of $\ell_p$ with $H$ contains~$p$ with multiplicity
$\nu-2$, and the residual intersection is a set of two points~$r_m$
and~$q_m$ in $\ell_p$. Define $\mathcal{I}_J(m)$ to be the point 
on $\ell_p$ such that the cross ratio of~$m$, $\mathcal{I}_J(m)$,
$q_m$ and $r_m$ are equal to~$-1$. The map $\mathcal{I}_J$ is a 
de Jonquières involution of $\mathbb{P}^n_\mathbb{C}$.
\end{itemize}

\section{Automorphisms of $(\mathbb{P}^1_\mathbb{C})^n$ and of $\mathbb{P}^n_\mathbb{C}$}

\begin{lemma}\label{lem:homography}
Any non-trivial homography is either an involution, or the composition of
 two involutions of $\mathrm{PGL}(2,\mathbb{C})$.

In particular if $\varphi$ belongs to 
$\mathrm{Aut}\big(\underbrace{\mathbb{P}^1_\mathbb{C}\times\mathbb{P}^1_\mathbb{C}\times\ldots\times\mathbb{P}^1_\mathbb{C}}_{\text{$n$ times}}\big)$, 
then 
\[
\mathfrak{n}\big(\varphi,\mathrm{Aut}\big(\underbrace{\mathbb{P}^1_\mathbb{C}\times\mathbb{P}^1_\mathbb{C}\times\ldots\times\mathbb{P}^1_\mathbb{C}}_{\text{$n$ times}}\big)\big)\leq 2n.
\]
\end{lemma}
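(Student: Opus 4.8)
The plan is to establish the dichotomy for a single factor first, and then to lift it to the product by exploiting that single-coordinate automorphisms attached to distinct factors commute.

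For the first assertion I would use the conjugacy classification of $\mathrm{PGL}(2,\mathbb{C})$. Since $\mathbb{C}$ is algebraically closed, the matrix of any non-trivial homography has an eigenvector, hence at least one fixed point on $\mathbb{P}^1_\mathbb{C}$; counting fixed points shows that every non-trivial element is conjugate either to $z\mapsto az$ with $a\neq 0,1$ (the diagonalizable case, two fixed points) or to $z\mapsto z+1$ (the parabolic case, one fixed point). I would first record that an element of $\mathrm{PGL}(2,\mathbb{C})$ is an involution exactly when it is represented by a trace-zero matrix, so that $z\mapsto -z$, $z\mapsto 1/z$, $z\mapsto 1-z$ are involutions; if $\varphi$ is itself such an involution there is nothing to prove.

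Otherwise I would exhibit an explicit factorization into two involutions on each normal form and then conjugate back. In the diagonalizable case, $z\mapsto az=(z\mapsto a/z)\circ(z\mapsto 1/z)$, a composition of two trace-zero maps; in the parabolic case, $z\mapsto z+1=(z\mapsto 1-z)\circ(z\mapsto -z)$, again a composition of two involutions. The point that makes the conjugation step work is that a conjugate of an involution is again an involution, so if $\varphi=A\,h\,A^{-1}$ with $h=\iota_2\circ\iota_1$, then $\varphi=(A\iota_2A^{-1})\circ(A\iota_1A^{-1})$ displays $\varphi$ as a product of two involutions of $\mathrm{PGL}(2,\mathbb{C})$.

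For the second assertion, write $\varphi=(\varphi_0,\varphi_1,\ldots,\varphi_{n-1})$ acting coordinate-wise, with each $\varphi_i\in\mathrm{PGL}(2,\mathbb{C})$. For each $i$ let $\widetilde{\varphi_i}$ denote the automorphism of $(\mathbb{P}^1_\mathbb{C})^n$ acting as $\varphi_i$ on the $i$-th factor and as the identity on the others, so that these commute and $\varphi=\widetilde{\varphi_0}\circ\widetilde{\varphi_1}\circ\cdots\circ\widetilde{\varphi_{n-1}}$. An involution of a single factor, extended by the identity on the remaining factors, is again an involution of $(\mathbb{P}^1_\mathbb{C})^n$; applying the first part to each $\varphi_i$ therefore writes each $\widetilde{\varphi_i}$ as a composition of at most two involutions of $(\mathbb{P}^1_\mathbb{C})^n$, and summing over the $n$ factors yields $\mathfrak{n}\big(\varphi,\mathrm{Aut}((\mathbb{P}^1_\mathbb{C})^n)\big)\le 2n$. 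The computations are elementary; the only delicate point is the bookkeeping of which normal form occurs (hence whether one or two involutions are needed per factor), together with the observation, used throughout, that being an involution is preserved under conjugation and under extension by the identity to the other factors.
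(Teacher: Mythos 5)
Your proof is correct, but it reaches the first assertion by a genuinely different route than the paper. The paper argues synthetically: Remark \ref{rem:involution} shows (via invariance of the cross ratio) that a homography with a point $p\notin\mathrm{Fix}(\nu)$ satisfying $\nu^2(p)=p$ is an involution, and Lemma \ref{lem:involution} then manufactures the two involutions $\iota_1,\iota_2$ by prescribing their values at three well-chosen points ($\iota_1$ sends $a,b,\nu(a)$ to $\nu(b),\nu(a),b$, etc.) and checking the involution criterion on each. You instead invoke the conjugacy classification of $\mathrm{PGL}(2,\mathbb{C})$ into diagonalizable and parabolic normal forms, record the trace-zero characterization of involutions, and exhibit the explicit factorizations $az=(a/z)\circ(1/z)$ and $z+1=(1-z)\circ(-z)$, transporting them back by conjugation; all of these identities check out. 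Your argument is more computational and arguably more transparent, but it leans on the algebraic closedness of $\mathbb{C}$ to produce the normal forms, whereas the paper's three-point construction is essentially field-agnostic (it only needs enough points on the line and the cross-ratio argument), which is in the spirit of the paper's later use of $\mathrm{PGL}(2,K)$ for non-closed fields $K$ such as $\mathbb{C}(z_1)$. For the ``in particular'' clause the paper gives no separate argument, and your coordinate-wise decomposition into commuting single-factor automorphisms, each costing at most two involutions, is exactly the intended one.

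One shared caveat, which is not a defect of your write-up relative to the paper: the full group $\mathrm{Aut}\big((\mathbb{P}^1_\mathbb{C})^n\big)$ is $\mathrm{PGL}(2,\mathbb{C})^n\rtimes\mathfrak{S}_n$, so a general automorphism may also permute the factors and is then not of the coordinate-wise form $(\varphi_0,\ldots,\varphi_{n-1})$ you (and, implicitly, the paper) assume. Handling the permutation part would require an extra observation, e.g.\ that maps such as $(x,y)\mapsto(g(y),g^{-1}(x))$ are involutions; as stated, both your proof and the paper's establish the bound only for the coordinate-wise subgroup.
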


\begin{remark}\label{rem:involution}
The homography $\nu\in\mathrm{PGL}(2,\mathbb{C})$ is a non-trivial 
involution if and only if there exists 
$p\in\mathbb{P}^1_\mathbb{C}\smallsetminus\mathrm{Fix}(\nu)$ such that 
$\nu^2(p)=p$, where $\mathrm{Fix}(\nu)$ denotes the set of fixed points 
of~$\nu$.

Indeed assume that there exists 
$p\in\mathbb{P}^1_\mathbb{C}\smallsetminus\mathrm{Fix}(\nu)$ such that 
$\nu^2(p)=p$, then $\mathrm{Fix}(\nu)\not=\mathbb{P}^1_\mathbb{C}$ and 
so $\nu\not=\mathrm{id}$. If $m\in\{p,\,\nu(p)\}$, then $\nu^2(m)=m$. 
If $p\not\in\{p,\,\nu(p)\}$, the cross ratio of $p$, $\nu(p)$, 
$\nu(m)$, $m$ is equal to the cross ratio of $p$, $\nu(p)$, $\nu(m)$ 
and $\nu^2(m)$. This implies that $\nu^2(m)=m$. 
\end{remark}

\begin{lemma}\label{lem:involution}
Let $\nu\in\mathrm{PGL}(2,\mathbb{C})$ be an homography. Consider 
three points $a$, $b$, $c$ of $\mathbb{P}^1_\mathbb{C}$ such that $a$, 
$b$, $c$ are distinct, $a\not\in\mathrm{Fix}(\nu)$, 
$b\not\in\mathrm{Fix}(\nu)$ and $b\not=\nu(a)$. 

There exist two involutions $\iota_1$, $\iota_2\in\mathrm{PGL}(2,\mathbb{C})$ 
such that $\nu=\iota_2\circ \iota_1$.
\end{lemma}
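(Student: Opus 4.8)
The plan is to reduce the statement to the construction of a single involution. Observe that if we can find an involution $\iota_1\in\mathrm{PGL}(2,\mathbb{C})$ for which $\iota_2:=\nu\circ\iota_1$ is again an involution, then we are done, since $\iota_2\circ\iota_1=\nu\circ\iota_1\circ\iota_1=\nu$. Now $\nu\circ\iota_1$ is an involution if and only if $(\nu\iota_1)^2=\mathrm{id}$, that is $\iota_1\circ\nu\circ\iota_1^{-1}=\nu^{-1}$; so the whole problem amounts to producing an involution that conjugates $\nu$ to $\nu^{-1}$, subject to the normalisation encoded by the points $a$, $b$, $c$.

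The construction I would use is the following. Define $\iota_1$ to be the involution of $\mathrm{PGL}(2,\mathbb{C})$ that exchanges $a$ and $b$ and sends $\nu(a)$ to $\nu^{-1}(b)$. To see that this prescription determines a genuine involution, I would work in a coordinate in which $a=0$ and $b=\infty$: the involutions exchanging $0$ and $\infty$ are exactly the maps $z\mapsto\mu/z$, and the remaining condition $\iota_1(\nu(a))=\nu^{-1}(b)$ forces $\mu=\nu(a)\,\nu^{-1}(b)$. This is a well-defined nonzero finite scalar precisely because of the hypotheses: $a\notin\mathrm{Fix}(\nu)$ gives $\nu(a)\neq a$, $b\notin\mathrm{Fix}(\nu)$ gives $\nu^{-1}(b)\neq b$, and $b\neq\nu(a)$ gives both $\nu(a)\neq b$ and $\nu^{-1}(b)\neq a$; hence $\nu(a)$ and $\nu^{-1}(b)$ are distinct from $a$ and $b$, so $\mu\in\mathbb{C}^{*}$ and $\iota_1$ is a non-trivial involution.

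It then remains to check that $\iota_2=\nu\circ\iota_1$ is an involution, and here I would invoke Remark \ref{rem:involution} rather than compute $\iota_2$ explicitly. Since $\iota_1$ exchanges $a$ and $b$, one gets $\iota_2(b)=\nu(\iota_1(b))=\nu(a)$ and $\iota_2(\nu(a))=\nu(\iota_1(\nu(a)))=\nu(\nu^{-1}(b))=b$; thus $\iota_2$ exchanges $b$ and $\nu(a)$. As $b\neq\nu(a)$ by hypothesis, the point $b$ lies outside $\mathrm{Fix}(\iota_2)$ yet satisfies $\iota_2^{2}(b)=b$, so Remark \ref{rem:involution} shows that $\iota_2$ is a non-trivial involution. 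This yields the factorisation $\nu=\iota_2\circ\iota_1$ into two involutions.

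The step I expect to be the only real obstacle is the well-definedness of $\iota_1$: one must be sure that the two prescribed pairs do not degenerate, and this is exactly where all three hypotheses on $a$ and $b$ are consumed. I would note that the distinctness of $c$ from $a$ and $b$ plays no role in this argument; it is presumably retained only to match the normalisation used elsewhere, and the argument above already produces the desired pair of involutions from $a$, $b$ and $\nu$ alone.
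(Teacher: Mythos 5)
Your proof is correct, but it follows a genuinely different route from the paper's. The paper constructs \emph{both} involutions from three-point data: $\iota_1$ is the homography sending $(a,b,\nu(a))$ to $(\nu(b),\nu(a),b)$, and $\iota_2$ the one sending $(\nu(a),\nu(b),\iota_1(c))$ to $(\nu(b),\nu(a),\nu(c))$; each is certified as an involution via Remark \ref{rem:involution} because it swaps a pair of distinct points, and the identity $\nu=\iota_2\circ\iota_1$ is then obtained by checking that the two sides agree at the three points $a$, $b$, $c$. You instead build a single involution $\iota_1$ (swapping $a$ and $b$, with $\iota_1(\nu(a))=\nu^{-1}(b)$) and \emph{define} $\iota_2:=\nu\circ\iota_1$, so the factorisation holds by construction and only the involutivity of $\iota_2$ remains to be checked --- which you do correctly by the same Remark, since $\iota_2$ exchanges the distinct points $b$ and $\nu(a)$. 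Your non-degeneracy verification for $\iota_1$ consumes exactly the hypotheses $a,b\notin\mathrm{Fix}(\nu)$ and $b\neq\nu(a)$, and your observation that $c$ plays no role is accurate: in the paper's argument $c$ serves only to pin down $\iota_2$ and to supply the third point at which the product is compared with $\nu$, a step your definition of $\iota_2$ renders superfluous. The trade-off is minor: the paper's version produces both involutions explicitly in terms of $\nu$, $a$, $b$, $c$ without invoking $\nu^{-1}$, while yours is shorter and isolates the classical mechanism at work, namely an involution conjugating $\nu$ to $\nu^{-1}$.
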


\begin{proof}
Let us first prove that there exists two unique homographies $\iota_1$, 
$\iota_2\in\mathrm{PGL}(2,\mathbb{C})$ such that
\[
\left\{
\begin{array}{ll}
\iota_1(a)=\nu(b),  \, \,\, \iota_1(b)=\nu(a), \, \,\, \iota_1(\nu(a))=b;\\
\iota_2(\nu(a))=\nu(b), \, \,\, \iota_2(\nu(b))=\nu(a),  \, \,\, \iota_2(\iota_1(c))=\nu(c).
\end{array}
\right.
\]

Note that by assumptions $a$, $b$, $\nu(a)$ (resp. $\nu(b)$, $\nu(a)$, 
$b$) are pairwise distinct. Hence there exists a unique homography 
$\iota_1\in\mathrm{PGL}(2,\mathbb{C})$ that sends $a$, $b$, $\nu(a)$
 onto $\nu(b)$, $\nu(a)$, $b$.

The points $\nu(a)$, $\nu(b)$ and $\iota_1(c)$ are distinct. Assume 
by contradiction that $\iota_1(c)=\nu(a)$, then $\iota_1(c)=\iota_1(b)$. 
By injectivity of $\iota_1$, one has $c=b$: contradiction. Similarly 
$\iota_1(c)\not=\nu(b)=\iota_1(a)$ and $\nu(a)\not=\nu(b)$. Since $a$, $b$, 
$c$ are distinct, $\nu(a)$, $\nu(b)$ and $\nu(c)$ also. As a consequence 
there exists a unique homography $\iota_1\in\mathrm{PGL}(2,\mathbb{C})$ 
that sends $\nu(a)$, $\nu(b)$, $\iota_1(c)$ onto $\nu(b)$, $\nu(a)$, $\nu(c)$.

\bigskip

By assumption $b$ and $\nu(a)$ are distinct so $b$ does not belong to 
$\mathrm{Fix}(\iota_1)$. But $\iota_1^2(b)=\iota_1(\nu(a))=b$. According 
to Remark \ref{rem:involution} the homography $\iota_1$ is thus an 
involution. Similarly $\nu(a)$ and $\nu(b)$ are distinct but $\nu(a)$ 
and $\nu(b)$ are switched by $\iota_2$ hence $\iota_2$ is an involution 
(Remark \ref{rem:involution}). 

Since $\nu(p)=\iota_2\circ \iota_1(p)$ for $p\in\{a,\,b,\,c\}$ one gets 
$\nu=\iota_2\circ \iota_1$.
\end{proof}

\begin{proof}[Proof of Lemma \ref{lem:homography}]
Let $\nu$ be an homography. If $\nu=\mathrm{id}$, then 
$\nu=\iota\circ \iota$ for any involution~$\iota$. Assume now that 
$\nu\not=\mathrm{id}$; then $\nu$ has at most two fixed points. Let us 
choose $a$, $b$ in $\mathbb{P}^1_\mathbb{C}\smallsetminus\mathrm{Fix}(\nu)$. 
If $a\not=\nu(b)$ or if $b\not=\nu(a)$, then $\nu$ can be written as a 
composition of two involutions (Lemma \ref{lem:involution}). If $b=\nu(a)$ 
and $a=\nu(b)$, then $\nu^2(a)=a$ with $a\not\in\mathrm{Fix}(\nu)$; 
Remark \ref{rem:involution} thus implies that $\nu$ is an involution. 
\end{proof}

\begin{lemma}\label{lem:transvection}
Let $n\geq 2$ be an integer.
\begin{enumerate}
\item Let $\Bbbk$ be a commutative ring of any characteristic. If 
$\varphi$ is an element of $\mathrm{SL}(n,\Bbbk)$, then 
$\mathfrak{n}\big(\varphi,\mathrm{SL}(n,\Bbbk)\big)\leq 2(n+1)$.

\item Assume that $\Bbbk$ is an algebraically closed field, and that 
$\varphi$ belongs to $\mathrm{PGL}(n,\Bbbk)$. Then 
$\mathfrak{n}\big(\varphi,\mathrm{PGL}(n,\Bbbk)\big)\leq 2(n+1)$.

\item If $\varphi$ is an element of 
$\mathrm{PGL}(2,\mathbb{C}[z_1,z_2,\ldots,z_{n-1}])$, then 
$\mathfrak{n}\big(\varphi,\mathrm{Bir}(\mathbb{P}^n_\mathbb{C})\big)\leq 8$.
\end{enumerate}
\end{lemma}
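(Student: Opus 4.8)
The three assertions are naturally proved in this order, (1) being the engine and (2), (3) consequences of it. For (1) the guiding principle is the elementary criterion behind Remark~\ref{rem:involution}: an element $M$ of a group is a product of two involutions as soon as some involution $\sigma$ satisfies $\sigma M\sigma^{-1}=M^{-1}$, since then $\sigma M$ is itself an involution and $M=\sigma\,(\sigma M)$. The plan is to peel off $\varphi\in\mathrm{SL}(n,\Bbbk)$ one row-and-column at a time, each peeling costing only the two involutions that the criterion provides, and to finish on a monomial matrix.

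Concretely, I would first check that a ``multi-transvection'' $\mathrm{id}+v\,e_0^{\top}$ with $v_0=0$ (the matrix that adds a fixed combination of the remaining coordinates to the first one) is a product of two involutions of $\mathrm{SL}(n,\Bbbk)$: the diagonal sign matrix $\mathrm{diag}(1,-1,\ldots,-1)$ fixes $e_0$ and negates $v$, hence conjugates $\mathrm{id}+v\,e_0^{\top}$ to its inverse; when $\det$ fails to be $1$ one instead uses the reflection fixing $e_0$ and negating a two-dimensional subspace containing $v$, which needs a complementary coordinate and so works for $n\ge 3$ (the case $n=2$ of (1) is degenerate and is only ever used through its projective avatar in (3), where $\mathrm{PGL}_2$ and $\mathrm{Bir}(\mathbb{P}^n_\mathbb{C})$ supply ample involutions). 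Granting this, given $\varphi$ I would use such matrices, together with a transformation bringing the first column of $\varphi$ to $e_0$, to replace $\varphi$ by $\bigl(\begin{smallmatrix}1&0\\0&\varphi'\end{smallmatrix}\bigr)$ with $\varphi'\in\mathrm{SL}(n-1,\Bbbk)$. Iterating reduces $\varphi$ to a monomial (permutation-times-diagonal) matrix, which is again a product of two involutions by the same criterion (a weighted permutation is conjugate to its inverse). The whole point of the bookkeeping is to group the operations so that each reduction step, and the final monomial factor, costs exactly two involutions, giving the total $2(n+1)$; keeping the constant this small, rather than the naive $4n$, is the delicate part.

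For (2), since $\Bbbk$ is algebraically closed every class in $\mathrm{PGL}(n,\Bbbk)$ lifts to $\mathrm{SL}(n,\Bbbk)$ (one extracts an $n$-th root of the determinant), so I would lift $\varphi$, apply (1), and project: the image in $\mathrm{PGL}(n,\Bbbk)$ of an involution of $\mathrm{SL}(n,\Bbbk)$ is an involution or the identity, so the bound $2(n+1)$ descends verbatim. For (3) I would pass to the field $K=\mathbb{C}(z_1,\ldots,z_{n-1})$ and run the argument of Lemmas~\ref{lem:involution} and~\ref{lem:homography} over $K$ in place of $\mathbb{C}$: those proofs only select three points of $\mathbb{P}^1$ in general position and interpolate, testing the outcome with Remark~\ref{rem:involution}, all of which is valid over any infinite field. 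This writes $\varphi\in\mathrm{PGL}(2,K)$ as a product of at most two homographic involutions over $K$, each of which is a fibrewise-M\"obius, hence de Jonqui\`eres-type, birational involution of $\mathbb{P}^n_\mathbb{C}$. The gap between this $2$ and the stated $8$ comes from insisting that the factors be honest involutions of $\mathbb{P}^n_\mathbb{C}$ defined over $\mathbb{C}$ and from embedding $\mathrm{PGL}(2,\mathbb{C}[z_1,\ldots,z_{n-1}])$ into $\mathrm{PGL}(2,K)$; conjugating the $K$-involutions into this shape costs the extra involutions, and tracking them gives $\mathfrak{n}\big(\varphi,\mathrm{Bir}(\mathbb{P}^n_\mathbb{C})\big)\le 8$.

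The main obstacle is (1) over an arbitrary commutative ring. Over a field one could invoke diagonalisation or a Gustafson--Halmos--Radjavi-type result, but over a ring of any characteristic neither is available: the conjugating involutions must be written down explicitly, their determinants forced to equal $1$ despite the parity of $n$, and the column reduction carried out without dividing by non-units. Ensuring that the column-clearing step and the final monomial factor really cost only two involutions apiece — so that the count stays linear in $n$ with the precise constant $2(n+1)$ — is the crux of the argument; once it is in place, (2) and (3) are essentially formal.
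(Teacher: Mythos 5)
The gap is in assertion (1), and it sits exactly where you flag ``the crux'': the bound $2(n+1)$. The paper does no column reduction at all; it simply cites the classical theorem (Perrin) that every element of $\mathrm{SL}(n,\Bbbk)$ is a product of at most $n+1$ transvections, and then observes that a transvection is a product of two involutions --- which is precisely your $\sigma$-criterion applied to $\mathrm{id}+v\,e_0^{\top}$. Your plan to reprove that decomposition by peeling off coordinates is left unexecuted, and its terminal step is false: a monomial or diagonal matrix of determinant $1$ over a general commutative ring is \emph{not} conjugate to its inverse, because conjugation preserves the characteristic polynomial and, say, $\mathrm{diag}(2,3,\tfrac16)\in\mathrm{SL}(3,\mathbb{Q})$ and $\mathrm{diag}(\tfrac12,\tfrac13,6)$ have different ones. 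Since $M=\iota_1\iota_2$ with $\iota_i^2=\mathrm{id}$ forces $\iota_1 M\iota_1=M^{-1}$, such a matrix is not a product of two involutions at all, so your reduction cannot stop at the monomial stage; one must push on to the identity by writing $\mathrm{diag}(u,u^{-1},1,\ldots,1)$ as a product of further transvections, and it is exactly that bookkeeping which yields the count $n+1$. You need either to carry it out or to cite it; as written, the heart of (1) is missing.

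Assertion (2) you handle exactly as the paper does (lift via an $n$-th root of the determinant, $\mathrm{PSL}=\mathrm{PGL}$). For (3) your route genuinely differs from the paper's: the paper composes $g$ with the scaling $h\colon z_0\mapsto z_0/P$, $P=\det g$, so that $h\circ g\in\mathrm{SL}(2,\mathbb{C}(z_1,\ldots,z_{n-1}))$ costs at most $6$ involutions by part (1) and $h=\frac{1}{z_0P}\circ\frac{1}{z_0}$ costs $2$ more, giving $8$. Your alternative --- running Lemma \ref{lem:involution} over $K=\mathbb{C}(z_1,\ldots,z_{n-1})$ --- is legitimate (the cross-ratio argument of Remark \ref{rem:involution} works over any infinite field) and would in fact give the sharper bound $2$, since an involution of $\mathrm{PGL}(2,K)$ is already a birational involution of $\mathbb{P}^n_{\mathbb{C}}$; but you do not complete it, and the closing paragraph rationalizing the number $8$ is not an argument.
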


\begin{proof}
\begin{enumerate}
\item Let us recall that an element of $\mathrm{SL}(n,\Bbbk)$ can 
be written as a composition of $\leq n+1$ transvections 
(\cite{Perrin}). But a transvection is a composition of two involutions 
so any an element of $\mathrm{SL}(n,\Bbbk)$ can be written as a 
composition of $\leq 2(n+1)$ involutions.

\item If $\Bbbk$ is algebraically closed, then 
$\mathrm{PSL}(n,\Bbbk)\simeq\mathrm{PGL}(n,\Bbbk)$ and one gets 
the result.

\item Let $g$ be an element of 
$\mathrm{PGL}(2,\mathbb{C}[z_1,z_2,\ldots,z_{n-1}])$; denote by 
$P(z_1,z_2,\ldots,z_{n-1})$ its determinant and by $h$ a scaling 
of scale factor $\frac{1}{P(z_1,z_2,\ldots,z_{n-1})}$. Then 
$h\circ g$ belongs to $\mathrm{SL}(2,\mathbb{C}(z_1,z_2,\ldots,z_{n-1}))$ 
and hence, according to the first assertion, can be written as 
a composition of $\leq 6$ involutions. But $h$ is as a composition 
of two involutions:
\[
\frac{1}{z_0P(z_1,z_2,\ldots,z_{n-1})}\circ\frac{1}{z_0}.
\]
As a result 
$\mathfrak{n}\big(\varphi,\mathrm{Bir}(\mathbb{P}^n_\mathbb{C})\big)\leq 8$.
\end{enumerate}
\end{proof}

\section{Dimension $2$}

\subsection{The real Cremona group}

There is an analogue to Proposition \ref{pro:dim2complexe} for the
real Cremona group. 

\begin{theorem}\label{thm:realCremonagroup}
Any element of $\mathrm{Bir}(\mathbb{P}^2_\mathbb{R})$ can be written 
as a composition of involutions of~$\mathbb{P}^2_\mathbb{R}$.
\end{theorem}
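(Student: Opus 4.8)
The plan is to reduce the statement to the linear case and then invoke Lemma \ref{lem:transvection}. The starting point is Zimmermann's theorem (\cite{Zimmermann}), already quoted in the introduction: the group $\mathrm{Bir}(\mathbb{P}^2_\mathbb{R})$ is generated by $\mathrm{PGL}(3,\mathbb{R})$ together with a family of birational involutions. Since each of these extra generators is by hypothesis an involution of $\mathbb{P}^2_\mathbb{R}$, and since a composition of compositions of involutions is again a composition of involutions, it suffices to show that every element of $\mathrm{PGL}(3,\mathbb{R})=\mathrm{Aut}(\mathbb{P}^2_\mathbb{R})$ can be written as a composition of involutions lying in $\mathrm{PGL}(3,\mathbb{R})$, which are automatically birational involutions of $\mathbb{P}^2_\mathbb{R}$.

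The key observation I would use is that, in this odd dimension, the real projective linear group coincides with the real special linear group. Indeed, the cube map $\lambda\mapsto\lambda^3$ is a bijection of $\mathbb{R}^*$, so every $A\in\mathrm{GL}(3,\mathbb{R})$ admits a unique scalar multiple $\lambda A$ with $\det(\lambda A)=\lambda^3\det A=1$; moreover the only real $\mu$ with $\mu^3=1$ and $\mu I\in\mathrm{SL}(3,\mathbb{R})$ is $\mu=1$. Hence the natural map $\mathrm{SL}(3,\mathbb{R})\to\mathrm{PGL}(3,\mathbb{R})$ is an isomorphism. This is precisely the step where the non-algebraically-closed field $\mathbb{R}$ must be handled: one cannot appeal to the second assertion of Lemma \ref{lem:transvection}, which requires $\Bbbk$ algebraically closed, but the oddness of $n=3$ lets me bypass it.

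It then remains to apply the first assertion of Lemma \ref{lem:transvection} with $n=3$ and $\Bbbk=\mathbb{R}$: every element of $\mathrm{SL}(3,\mathbb{R})$, hence every element of $\mathrm{PGL}(3,\mathbb{R})$, is a composition of at most $2(3+1)=8$ involutions of $\mathrm{SL}(3,\mathbb{R})\cong\mathrm{PGL}(3,\mathbb{R})$. Combining this with Zimmermann's generation statement shows that any $\varphi\in\mathrm{Bir}(\mathbb{P}^2_\mathbb{R})$, being a finite word in the generators, is a finite composition of involutions of $\mathbb{P}^2_\mathbb{R}$.

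I do not expect a genuine obstacle here; the theorem is essentially a formal consequence of the two inputs. The only points requiring care are the verification that $\mathrm{PGL}(3,\mathbb{R})=\mathrm{SL}(3,\mathbb{R})$ over $\mathbb{R}$, so that Lemma \ref{lem:transvection}(1) applies and produces involutions genuinely lying in $\mathrm{PGL}(3,\mathbb{R})$, together with checking that the involutions furnished by Zimmermann's theorem are indeed involutions of the model $\mathbb{P}^2_\mathbb{R}$. Everything else is bookkeeping, and since only the qualitative conclusion (and not a numerical bound) is claimed, no further optimization of the constants is needed.
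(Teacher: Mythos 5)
Your proof is correct, and the overall skeleton matches the paper's: both arguments quote Zimmermann's generation result (Proposition \ref{Pro:Zimmermann}) to reduce the theorem to showing that every element of $\mathrm{PGL}(3,\mathbb{R})$ is a composition of involutions. Where you diverge is in how that linear step is handled. The paper disposes of it in one line by invoking the simplicity of $\mathrm{PGL}(3,\mathbb{R})$: the subgroup generated by the involutions of $\mathrm{PGL}(3,\mathbb{R})$ is normal (a conjugate of an involution is an involution) and non-trivial, hence is the whole group. You instead observe that the cube map is a bijection of $\mathbb{R}^*$, so that $\mathrm{SL}(3,\mathbb{R})\to\mathrm{PGL}(3,\mathbb{R})$ is an isomorphism, and then apply Lemma \ref{lem:transvection}(1) over the field $\mathbb{R}$ --- correctly noting that assertion (2) of that lemma is unavailable since $\mathbb{R}$ is not algebraically closed. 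Your route is slightly longer but buys something the paper's does not state: an explicit bound of $2(3+1)=8$ involutions for each linear generator, hence in principle a quantitative version of the theorem in terms of the word length of $\varphi$ in Zimmermann's generators; the paper's simplicity argument is shorter but purely qualitative. Both are complete proofs given the quoted inputs.
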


Theorem \ref{thm:realCremonagroup} directly follows from the simplicity 
of $\mathrm{PGL}(3,\mathbb{R})$ and the following statement:

\begin{proposition}[\cite{Zimmermann}]\label{Pro:Zimmermann}
The group $\mathrm{Bir}(\mathbb{P}^2_\mathbb{R})$ is generated by 
$\mathrm{PGL}(3,\mathbb{R})$, the set of standard quintic involutions
and the two following quadratic involutions
\begin{align*}
& (z_1z_2:z_0z_2:z_0z_1) && (z_0z_2:z_1z_2:z_0^2+z_1^2).
\end{align*}
\end{proposition}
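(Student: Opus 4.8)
The plan is to obtain the factorisation from the Sarkisov program over the non-closed field $\mathbb{R}$ and then to recognise each elementary factor as one of the three prescribed involutions, pre- and post-composed with elements of $\mathrm{PGL}(3,\mathbb{R})$. First I would factor an arbitrary $\varphi\in\mathrm{Bir}(\mathbb{P}^2_\mathbb{R})$ as a finite chain of Sarkisov links between rational real Mori fibre spaces; the intermediate surfaces are minimal real del Pezzo surfaces and real conic bundles, and every such link is an elementary birational modification determined by its base data. Equivalently, one may start from Iskovskikh's generation theorem, which already reduces $\mathrm{Bir}(\mathbb{P}^2_\mathbb{R})$ to $\mathrm{PGL}(3,\mathbb{R})$ together with finitely many quadratic and quintic transformations; the task then becomes to show that each of these transformations lies in the subgroup generated by $\mathrm{PGL}(3,\mathbb{R})$ and the listed involutions.

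The second step is to classify the base-point configurations that arise, tracking the action of complex conjugation. A quadratic link of $\mathbb{P}^2_\mathbb{R}$ has three base points forming a Galois-stable set, hence of exactly two shapes: three real points in general position, or one real point together with a complex conjugate pair. A map of the first shape can be written as $A\circ\sigma_2\circ B$ with $A,B\in\mathrm{PGL}(3,\mathbb{R})$ carrying the coordinate triangle to the relevant base loci, where $\sigma_2=(z_1z_2:z_0z_2:z_0z_1)$; a map of the second shape can likewise be written as $A\circ\tau\circ B$ with $\tau=(z_0z_2:z_1z_2:z_0^2+z_1^2)$, whose base locus is precisely the real point $(0:0:1)$ and the conjugate pair $(1:\pm i:0)$. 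The links genuinely requiring degree $5$ are those whose base data consist of three complex conjugate pairs in general position, so that the homaloidal net is the net of quintics with double points there; the corresponding transformation is a standard quintic involution, again up to a double $\mathrm{PGL}(3,\mathbb{R})$-translate.

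The third step is to confirm that the three model maps are involutions, so that the statement is about the prescribed involutive generators and not merely about membership in a double coset. For $\sigma_2$ this is immediate, and for $\tau$ a one-line computation gives $\tau^2=\mathrm{id}$. For the quintic models I would argue as for the de Jonquières involutions recalled earlier: such a map fixes an explicit curve pointwise and interchanges, on each line of the relevant pencil, the two residual points, so that a cross-ratio argument exactly in the spirit of Remark \ref{rem:involution} forces it to square to the identity.

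The main obstacle, and where the genuine work lies, is the completeness of this dictionary: one must show that the Sarkisov analysis over $\mathbb{R}$ produces no base-point patterns beyond the real triangle, the real-point-plus-conjugate-pair, and the three-conjugate-pairs configurations above, i.e. that the links through del Pezzo surfaces and conic bundles of other types either do not occur in a minimal factorisation of a self-map of $\mathbb{P}^2_\mathbb{R}$ or decompose further into the listed pieces. This is precisely the delicate part of the real Sarkisov/Iskovskikh program, and it is compounded by the fact that a general link is not itself an involution: its source and target base configurations differ, so one cannot read off an involution directly. I would absorb this by using the known relations in $\mathrm{Bir}(\mathbb{P}^2_\mathbb{R})$ to rewrite any residual non-involutive generator as a product of the three models and elements of $\mathrm{PGL}(3,\mathbb{R})$, the crux being to verify that this explicit finite list of involutions is rich enough to realise every link and to close up under those relations.
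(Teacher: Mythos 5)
You should first be aware that the paper contains no proof of this proposition at all: it is imported verbatim from \cite{Zimmermann}, and the text only uses it (together with the simplicity of $\mathrm{PGL}(3,\mathbb{R})$) to deduce Theorem \ref{thm:realCremonagroup}. So your attempt has to be measured against the literature proof, namely the generation theorem of Blanc and Mangolte for $\mathrm{Bir}(\mathbb{P}^2_\mathbb{R})$, obtained via the Sarkisov program over $\mathbb{R}$, refined by Zimmermann so that the generators become involutions. Your outline does follow that route, but two of its steps are genuine gaps rather than deferrable details. The first is the one you yourself flag: completeness of the dictionary. Over $\mathbb{R}$ the Sarkisov factorisation does not stay on $\mathbb{P}^2$; the links pass through other real Mori fibre spaces --- the quadric sphere $Q^{3,1}$, real conic bundles, del Pezzo surfaces of degree $6$ --- so the elementary factors are not self-maps of $\mathbb{P}^2$ with a Galois-stable triple (or sextuple) of planar base points, and one cannot classify them by listing stable point configurations in the plane. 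In particular the standard quintic transformations arise only as compositions of links through the sphere, not as single links of $\mathbb{P}^2$ read off from a base scheme. Reassembling an arbitrary chain of links into $\mathbb{P}^2$-to-$\mathbb{P}^2$ pieces of exactly your three shapes \emph{is} the content of the Blanc--Mangolte theorem; your closing paragraph concedes this and appeals to ``known relations'' without supplying any mechanism, so what you have is a plan, not a proof.

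The second gap is your step three, which is incorrect as stated. A standard quintic transformation has homaloidal type $(5;2^6)$ --- six double points forming three conjugate imaginary pairs --- and not the de Jonquières type $(5;4,1^8)$; it preserves no pencil of lines through a point, so no cross-ratio argument ``in the spirit of Remark \ref{rem:involution}'' can apply to it. Worse, a general standard quintic transformation is \emph{not} an involution: an involution of type $(5;2^6)$ has the same base scheme as its inverse, hence lifts to a biregular automorphism of the cubic surface obtained by blowing up the six base points, which constrains the configuration; involutivity is a special property, not something to be ``confirmed'' for all quintic factors. Zimmermann's actual contribution, beyond Blanc--Mangolte, is precisely to trade the quadratic and quintic generators for involutive ones --- showing by explicit decompositions and relations that every standard quintic transformation is a product of standard quintic involutions and elements of $\mathrm{PGL}(3,\mathbb{R})$ (and likewise on the quadratic side, where your verification that $\sigma_2$ and $\tau=(z_0z_2:z_1z_2:z_0^2+z_1^2)$ are involutions is fine but easy). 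Without that lemma, your argument establishes membership of each link in a double coset of quintic \emph{transformations}, which is strictly weaker than the statement to be proved.
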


\subsection{The de Jonquières subgroup}

An element of $\mathrm{Bir}(\mathbb{P}^2_\mathbb{C})$ is a 
\emph{de Jonquières map} if it preserves a rational fibration, 
{\it i.e.} if it is conjugate
to an element of 
\[
\mathrm{J}_2=\mathrm{PGL}(2,\mathbb{C}(z_1))\rtimes\mathrm{PGL}(2,\mathbb{C}).
\]
We will denote by $\widetilde{\mathrm{J}_2}$ the subgroup of 
birational maps that preserves fiberwise the fibration $z_1=$ 
constant, {\it i.e.} 
$\widetilde{\mathrm{J}_2}=\mathrm{PGL}(2,\mathbb{C}(z_1))$.

\begin{lemma}\label{lem:J0}
If $\varphi=(\varphi_0,\varphi_1)$ belongs to $\widetilde{\mathrm{J}_2}$, 
then $\mathfrak{n}\big(\varphi,\widetilde{\mathrm{J}_2}\big)\leq 8$.

Furthermore if $\det \varphi_0=\pm 1$, then 
$\mathfrak{n}\big(\varphi,\widetilde{\mathrm{J}_2}\big)\leq 4$.
\end{lemma}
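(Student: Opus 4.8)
The plan is to treat $\widetilde{\mathrm{J}_2}=\mathrm{PGL}(2,\mathbb{C}(z_1))$ exactly as in the proof of Lemma~\ref{lem:transvection}, with the field $K=\mathbb{C}(z_1)$ in the role of the ring $\Bbbk$. Since $\varphi$ preserves the fibration $z_1=\text{const}$ fiberwise we have $\varphi_1=z_1$, and I identify $\varphi$ with its fibrewise action $\varphi_0\in\mathrm{PGL}(2,K)$, represented by a matrix $\left(\begin{smallmatrix}a & b\\ c & d\end{smallmatrix}\right)$ with entries in $K$ and $P:=\det\varphi_0=ad-bc\in K^\ast$. Every auxiliary map produced below again lies in $\mathrm{PGL}(2,K)=\widetilde{\mathrm{J}_2}$, so it suffices to count involutions of $\mathrm{PGL}(2,K)$; by the trace-zero criterion behind Remark~\ref{rem:involution} these are exactly the classes of non-scalar invertible trace-zero matrices over $K$.

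For the general bound I would first remove the determinant by a scaling, copying Lemma~\ref{lem:transvection}(3). Set $h\colon z_0\mapsto z_0/P$; then $h\circ\varphi_0\in\mathrm{SL}(2,K)$, and Lemma~\ref{lem:transvection}(1) with $\Bbbk=K$ and $n=2$ writes it as a composition of at most $2(2+1)=6$ involutions of $\mathrm{SL}(2,K)\subset\widetilde{\mathrm{J}_2}$. The scaling $h$ is the composition $\left(z_0\mapsto 1/(Pz_0)\right)\circ\left(z_0\mapsto 1/z_0\right)$ of two involutions of $\widetilde{\mathrm{J}_2}$, and hence so is $h^{-1}$; writing $\varphi_0=h^{-1}\circ(h\circ\varphi_0)$ then exhibits $\varphi$ as a composition of at most $2+6=8$ involutions of $\widetilde{\mathrm{J}_2}$.

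For the refined bound, assume $\det\varphi_0=\pm1$, so that the scaling step is unnecessary: if $\det\varphi_0=1$ then already $\varphi_0\in\mathrm{SL}(2,K)$, while if $\det\varphi_0=-1$ then $\tau\circ\varphi_0\in\mathrm{SL}(2,K)$ for the involution $\tau\colon z_0\mapsto 1/z_0$. It therefore suffices to write an arbitrary $M\in\mathrm{SL}(2,K)$ as a composition of at most two involutions of $\mathrm{PGL}(2,K)$, after which $\varphi$ needs at most $1+2=3\leq 4$. The main obstacle is precisely this improvement of the generic count $6$ down to $2$: because $K=\mathbb{C}(z_1)$ is \emph{not} algebraically closed, Lemma~\ref{lem:homography} (proved over $\mathbb{C}$) does not apply verbatim, and the difficulty is to guarantee that the two involutions can be chosen over $K$ rather than over an extension. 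I would bypass this by the $2\times2$ matrix analogue of the construction of Lemma~\ref{lem:involution}: look for a trace-zero matrix $I_1$ with $\det I_1\neq0$ such that $MI_1$ is again trace-zero. The condition $\mathrm{tr}(MI_1)=0$ cuts out a line in the projective plane of trace-zero matrices, from which the conic $\{\det I_1=0\}$ removes at most two points; since $K$ is infinite this line still carries a $K$-point, yielding an admissible $I_1$. Then $I_1$ and $I_2:=MI_1$ are involutions of $\mathrm{PGL}(2,K)$ with $M=I_2\circ I_1$, and the bound $4$ follows.

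The points that need genuine care are, first, checking at each stage that the matrices exhibited really are non-scalar and invertible over $K$ — so that they define involutions of $\widetilde{\mathrm{J}_2}$ by Remark~\ref{rem:involution} rather than degenerate maps — and, second, the existence of the auxiliary involution $I_1$ in the refined step. Both reduce to the infinitude of $\mathbb{C}(z_1)$, which lets one avoid the finitely many loci where a trace-zero matrix becomes singular.
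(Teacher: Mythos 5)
Your argument is correct, and for the second assertion it takes a genuinely different route from the paper. For the first bound you reproduce the paper's proof verbatim: the paper simply invokes Lemma~\ref{lem:transvection}(3) (scaling by the determinant written as $\frac{1}{z_0P}\circ\frac{1}{z_0}$, plus the transvection count for $\mathrm{SL}(2,K)$), which is exactly your $2+6=8$. For the refined bound the paper does not construct anything: it cites Gustafson--Halmos--Radjavi, whose theorem states that a matrix over a field is a product of involutions of $\mathrm{GL}$ iff its determinant is $\pm1$, and then at most four suffice. Your replacement --- choosing a trace-zero invertible $I_1$ over $K=\mathbb{C}(z_1)$ with $\mathrm{tr}(MI_1)=0$, which exists because the linear condition cuts a line in the $\mathbb{P}^2$ of trace-zero matrices meeting the smooth conic $\{a^2+bc=0\}$ in at most two points while the line has infinitely many $K$-points --- is sound (Cayley--Hamilton gives $I_1^2=-\det(I_1)\,\mathrm{id}$, so $[MI_1][I_1]=[M]$ in $\mathrm{PGL}(2,K)$, and non-scalarity is automatic in characteristic $0$). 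Note, however, that your construction only needs $M$ non-scalar, not $\det M=\pm1$: it actually proves that \emph{every} element of $\mathrm{PGL}(2,K)=\widetilde{\mathrm{J}_2}$ is a product of at most two involutions of $\widetilde{\mathrm{J}_2}$, which makes both the scaling step and the preliminary reduction by $\tau\colon z_0\mapsto 1/z_0$ unnecessary and improves both stated bounds to $2$. The hypothesis $\det\varphi_0=\pm1$ is only relevant if one insists, as Gustafson--Halmos--Radjavi do, that the involutions be taken in $\mathrm{GL}(2,K)$ rather than $\mathrm{PGL}(2,K)$; since the lemma counts involutions of $\widetilde{\mathrm{J}_2}=\mathrm{PGL}(2,\mathbb{C}(z_1))$, your stronger conclusion is legitimate. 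One cosmetic caveat, inherited from the paper's own Lemma~\ref{lem:transvection}: the factors of a transvection that square to the identity have determinant $-1$, so they are involutions of $\mathrm{GL}(2,K)$ (hence of $\mathrm{PGL}(2,K)$), not of $\mathrm{SL}(2,K)$ as you write; this does not affect the count in $\widetilde{\mathrm{J}_2}$.
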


\begin{proof}
Lemma \ref{lem:transvection} implies the first assertion, and the 
last assertion follows from \cite{GustafsonHalmosRadjavi}.
\end{proof}

\begin{corollary}\label{cor:jonquieres}
Any de Jonquières map of $\mathbb{P}^2_\mathbb{C}$ can be written 
as a composition of $\leq 10$ Cremona involutions of 
$\mathbb{P}^2_\mathbb{C}$.
\end{corollary}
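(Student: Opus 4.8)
The plan is to exploit the semidirect-product structure $\mathrm{J}_2=\mathrm{PGL}(2,\mathbb{C}(z_1))\rtimes\mathrm{PGL}(2,\mathbb{C})$ together with Lemmas \ref{lem:homography} and \ref{lem:J0}. Since a de Jonquières map is by definition conjugate to an element of $\mathrm{J}_2$, and conjugating a composition of $k$ involutions by any fixed $\Phi\in\mathrm{Bir}(\mathbb{P}^2_\mathbb{C})$ yields a composition of $k$ (conjugate, hence still) involutions, it suffices to bound $\mathfrak{n}\big(\varphi,\mathrm{Bir}(\mathbb{P}^2_\mathbb{C})\big)$ for $\varphi\in\mathrm{J}_2$.

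First I would use the semidirect-product decomposition to write $\varphi=f\circ g$, where $f\in\widetilde{\mathrm{J}_2}=\mathrm{PGL}(2,\mathbb{C}(z_1))$ preserves the fibration $z_1=\text{const}$ fiberwise and $g$ is the lift of the homography $h\in\mathrm{PGL}(2,\mathbb{C})$ acting on the base, i.e. $g\colon(z_0,z_1)\mapsto(z_0,h(z_1))$ in the affine chart. The two factors can then be treated independently.

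For the fiber part $f$, Lemma \ref{lem:J0} gives $\mathfrak{n}\big(f,\widetilde{\mathrm{J}_2}\big)\leq 8$; since an involution of $\widetilde{\mathrm{J}_2}$ is in particular a Cremona involution of $\mathbb{P}^2_\mathbb{C}$, this contributes at most $8$ involutions. For the base part $g$, Lemma \ref{lem:homography} writes $h$ either as a single involution or as a composition $\iota_2\circ\iota_1$ of two involutions of $\mathrm{PGL}(2,\mathbb{C})$. Each $\iota_i$ lifts to $\tilde\iota_i\colon(z_0,z_1)\mapsto(z_0,\iota_i(z_1))$, which is a Cremona involution of $\mathbb{P}^2_\mathbb{C}$ since $\iota_i^2=\mathrm{id}$ forces $\tilde\iota_i^2=\mathrm{id}$ on a dense open set; thus $g$ is a composition of at most $2$ Cremona involutions.

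Combining the two bounds, $\varphi=f\circ g$ is a composition of at most $8+2=10$ Cremona involutions of $\mathbb{P}^2_\mathbb{C}$, and conjugating back preserves this count for an arbitrary de Jonquières map. The only points needing verification are that the factorization $\varphi=f\circ g$ is legitimate (immediate from writing an element of $\mathrm{J}_2$ explicitly in the affine chart) and that the lifts $\tilde\iota_i$ are genuine involutions of $\mathbb{P}^2_\mathbb{C}$ (clear from the explicit formula). I do not anticipate any real obstacle: the substantive work has already been carried out in Lemmas \ref{lem:homography} and \ref{lem:J0}, and the corollary is essentially their bookkeeping combination.
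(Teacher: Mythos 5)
Your proof is correct and follows essentially the same route as the paper: conjugate into $\mathrm{J}_2$, split an element of $\mathrm{J}_2$ into its fiberwise part in $\widetilde{\mathrm{J}_2}=\mathrm{PGL}(2,\mathbb{C}(z_1))$ and the lift of a base homography, then apply Lemma \ref{lem:J0} (at most $8$ involutions) and Lemma \ref{lem:homography} (at most $2$ involutions), noting that conjugation preserves the count of involutions. The only cosmetic difference is the order of the two factors in the semidirect-product decomposition, which is immaterial.
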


\begin{proof}
Let us remark that any Jonquières map $\varphi$ of 
$\mathbb{P}^2_\mathbb{C}$ can be written as $\psi\circ j\circ\psi^{-1}$ 
where~$\psi$ denotes an element of $\mathrm{Bir}(\mathbb{P}^2_\mathbb{C})$ 
and $j$ an element of $\mathrm{J}_2$. But
\[
j=\left(\frac{a(z_1)z_0+b(z_1)}{c(z_1)z_0+d(z_1)},\frac{\alpha z_1+\beta}{\gamma z_1+\delta}\right)=\left(z_0,\frac{\alpha z_1+\beta}{\gamma z_1+\delta}\right)\circ\left(\frac{a(z_1)z_0+b(z_1)}{c(z_1)z_0+d(z_1)},z_1\right)
\]
As a consequence 
\[
\varphi=\left(\psi\circ\left(z_0,\frac{\alpha z_1+\beta}{\gamma z_1+\delta}\right)\circ\psi^{-1}\right)\circ\left(\psi\circ\left(\frac{a(z_1)z_0+b(z_1)}{c(z_1)z_0+d(z_1)},z_1\right)\circ\psi^{-1}\right)
\]

Then one concludes with Lemmas \ref{lem:homography} and \ref{lem:J0}.
\end{proof}

\subsection{Subgroup of polynomial automorphisms of $\mathbb{C}^2$}\label{sec:polynomialautomorphisms}

Note that there is no analogue to Proposition \ref{pro:dim2complexe} in 
the context of polynomial automorphisms of $\mathbb{C}^2$. For instance
the automorphism $(2z_0,3z_1)$ cannot be written as a composition of 
involutions in $\mathrm{Aut}(\mathbb{C}^2)$.

According to Lemma \ref{lem:transvection} and Corollary \ref{cor:jonquieres} one has 
the following result:

\begin{lemma}\label{lem:affineelementaire}
Let $\varphi$ be a polynomial automorphism of $\mathbb{C}^2$.

If $\varphi$ is an affine automorphism, then $\mathfrak{n}\big(\varphi,\mathrm{Aut}(\mathbb{P}^2_\mathbb{C})\big)\leq 8$.

If $\varphi$ is an elementary automorphism, then $\mathfrak{n}\big(\varphi,\mathrm{J}_2\big)\leq 10$.
\end{lemma}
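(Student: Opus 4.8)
The plan is to recognize each of the two classes of automorphisms as sitting inside a group whose elements have already been written as short compositions of involutions, so that both assertions become immediate reductions to Lemma~\ref{lem:transvection} and to the mechanism behind Corollary~\ref{cor:jonquieres}, rather than genuinely new arguments.

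For the affine case, I would use the embedding of $\mathbb{C}^2$ as the affine chart $z_2 = 1$ of $\mathbb{P}^2_\mathbb{C}$: an affine automorphism $z \mapsto Az + b$, with $A \in \mathrm{GL}(2,\mathbb{C})$ and $b \in \mathbb{C}^2$, extends to the projective automorphism attached to the matrix $\left(\begin{smallmatrix} A & b \\ 0 & 1 \end{smallmatrix}\right) \in \mathrm{GL}(3,\mathbb{C})$, hence to an element of $\mathrm{Aut}(\mathbb{P}^2_\mathbb{C}) = \mathrm{PGL}(3,\mathbb{C})$. Applying Lemma~\ref{lem:transvection}(2) with $n = 3$ and $\Bbbk = \mathbb{C}$ then gives $\mathfrak{n}\big(\varphi, \mathrm{PGL}(3,\mathbb{C})\big) \leq 2(3+1) = 8$, which is exactly the first claim.

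For the elementary case, I would first record the shape of an element of $\mathrm{E}_2$: polynomial invertibility forces $\varphi = \big(a z_0 + P(z_1),\, b z_1 + c\big)$ with $a, b \in \mathbb{C}^*$, $c \in \mathbb{C}$ and $P \in \mathbb{C}[z_1]$. Such a map carries the fiber $\{z_1 = \text{const}\}$ to another such fiber, so it preserves the rational fibration and lies in $\mathrm{J}_2 = \mathrm{PGL}(2,\mathbb{C}(z_1)) \rtimes \mathrm{PGL}(2,\mathbb{C})$, its first coordinate being a Möbius transformation in $z_0$ over $\mathbb{C}(z_1)$ and its second a homography in $z_1$. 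Since $\varphi$ already lies in $\mathrm{J}_2$, I would rerun the decomposition from the proof of Corollary~\ref{cor:jonquieres} with $\psi = \mathrm{id}$ and $j = \varphi$: the base homography $(z_0,\, b z_1 + c)$ splits into $\leq 2$ involutions of $\mathrm{PGL}(2,\mathbb{C})$ by Lemma~\ref{lem:homography}, while the fiberwise factor $(a z_0 + P(z_1),\, z_1) \in \widetilde{\mathrm{J}_2}$ splits into $\leq 8$ involutions by Lemma~\ref{lem:J0}, giving $\leq 10$ in all.

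The one point needing care — and the main obstacle — is that Corollary~\ref{cor:jonquieres} only bounds the number of involutions lying in $\mathrm{Bir}(\mathbb{P}^2_\mathbb{C})$, whereas here I must keep every involution inside the smaller group $\mathrm{J}_2$. This is precisely why I would redo the splitting with $\psi = \mathrm{id}$ instead of quoting the corollary verbatim: with no outer conjugation, the involutions produced by Lemma~\ref{lem:homography} act only on the base $z_1$ (so they sit in the $\mathrm{PGL}(2,\mathbb{C})$ factor) and those produced by Lemma~\ref{lem:J0} sit in $\widetilde{\mathrm{J}_2} = \mathrm{PGL}(2,\mathbb{C}(z_1))$, so all of them genuinely belong to $\mathrm{J}_2$. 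It then only remains to note that the two factors in the splitting $j = (z_0, \cdot) \circ (\cdot, z_1)$ are individually de Jonquières, which is immediate from the displayed identity in the proof of Corollary~\ref{cor:jonquieres}.
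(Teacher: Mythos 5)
Your proof is correct and follows essentially the same route as the paper, which proves this lemma simply by invoking Lemma \ref{lem:transvection} for the affine case and Corollary \ref{cor:jonquieres} for the elementary case. Your extra care in rerunning the de Jonqui\`eres splitting with $\psi=\mathrm{id}$ so that every involution lands in $\mathrm{J}_2$ (rather than merely in $\mathrm{Bir}(\mathbb{P}^2_\mathbb{C})$) makes explicit a point the paper leaves implicit, and is exactly what is needed to justify the stated bound $\mathfrak{n}\big(\varphi,\mathrm{J}_2\big)\leq 10$.
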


An element $\varphi\in\mathrm{Aut}(\mathbb{C}^2)$ is a \emph{generalized Hénon
 map} if 
\[
\varphi=(z_1,P(z_1)-\delta z_0)
\]
where $\delta$ belongs to $\mathbb{C}^*$ and $P$ is an element of $\mathbb{C}[z_1]$ of degree $\geq 2$. Note that $\delta=\mathrm{jac}(\varphi)$. 

\begin{lemma}\label{lem:henonmaps}
Let $\varphi\in\mathrm{Aut}(\mathbb{C}^2$) be a generalized Hénon map. 
\begin{itemize}
\item If $\varphi$ has jacobian $1$, then $\mathfrak{n}\big(\varphi,\mathrm{Aut}(\mathbb{C}^2)\big)\leq 2$;

\item otherwise $\mathfrak{n}\big(\varphi,\mathrm{Bir}(\mathbb{P}^2_\mathbb{C})\big)\leq 11$.
\end{itemize}
\end{lemma}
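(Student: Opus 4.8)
The plan is to treat the two cases separately, exhibiting explicit involutions in the jacobian-one case and reducing the general case to it by peeling off a diagonal linear factor.

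For the first bullet, suppose $\mathrm{jac}(\varphi)=\delta=1$, so that $\varphi=(z_1,P(z_1)-z_0)$. First I would introduce the swap $\iota_1=(z_1,z_0)$, a linear involution of $\mathbb{C}^2$, together with the polynomial map $\iota_2=(z_0,P(z_0)-z_1)$. A one-line computation gives $\iota_2\circ\iota_2=\mathrm{id}$, so $\iota_2\in\mathrm{Aut}(\mathbb{C}^2)$ is an involution, and composing yields $\iota_2\circ\iota_1(z_0,z_1)=(z_1,P(z_1)-z_0)=\varphi$. Hence $\varphi$ is a product of two involutions of $\mathrm{Aut}(\mathbb{C}^2)$ and $\mathfrak{n}\big(\varphi,\mathrm{Aut}(\mathbb{C}^2)\big)\leq 2$. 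Since a generalized Hénon map has infinite order it is not itself an involution, which is why equality holds in the sharper formulation, but only the upper bound is required here.

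For the second bullet, with $\delta\neq 1$, the idea is to absorb the jacobian into a single diagonal factor. I would set $\chi=(z_1,P(z_1)-z_0)$, a jacobian-one generalized Hénon map, and $D=(\delta z_0,z_1)$, a diagonal linear automorphism, so that $\chi\circ D=\varphi$; the first bullet then supplies two involutions for $\chi$. It remains to estimate the involution length of $D$. I would observe that $D$ lies in $\widetilde{\mathrm{J}_2}=\mathrm{PGL}(2,\mathbb{C}(z_1))$, its fibrewise part being the homography $z_0\mapsto\delta z_0$ of determinant $\delta$, and invoke Lemma \ref{lem:J0} to bound $\mathfrak{n}\big(D,\widetilde{\mathrm{J}_2}\big)$; adding the two involutions coming from $\chi$ yields the stated bound on $\mathfrak{n}\big(\varphi,\mathrm{Bir}(\mathbb{P}^2_\mathbb{C})\big)$. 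Alternatively one may keep $D$ inside $\mathrm{PGL}(3,\mathbb{C})$ and apply Lemma \ref{lem:transvection}, or view it as an affine automorphism and apply Lemma \ref{lem:affineelementaire}; each of these routes expresses the diagonal factor through the already-established linear estimates.

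The only genuinely creative step is producing the two involutions $\iota_1,\iota_2$ in the jacobian-one case; everything else is organisational. The mild obstacle in the general case is that for generic $\delta\neq\pm 1$ the scaling $z_0\mapsto\delta z_0$ is not conjugate to its inverse, hence is not a product of two involutions inside $\mathrm{PGL}$, which forces one to spend the full linear budget of Lemma \ref{lem:J0} (or Lemma \ref{lem:transvection}) on $D$ rather than a cheaper count; this is exactly where the bulk of the involutions is consumed. I would finally check that every involution produced genuinely lies in $\mathrm{Bir}(\mathbb{P}^2_\mathbb{C})$ — the two arising from $\chi$ are polynomial, hence automorphisms of $\mathbb{C}^2$, and those bounding $D$ come from the cited lemmas — so that the resulting composition is a legitimate factorisation in the Cremona group.
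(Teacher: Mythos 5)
Your argument is correct, but the decomposition in the non-unimodular case is genuinely different from the paper's. The paper writes $\varphi=(z_1,z_0)\circ\big(P(z_1)-\delta z_0,z_1\big)$, i.e.\ one involution (the swap) followed by an elementary automorphism, which it costs at $\leq 10$ via Lemma \ref{lem:affineelementaire}, giving $1+10=11$. You instead factor $\varphi=\chi\circ D$ with $\chi=(z_1,P(z_1)-z_0)$ a jacobian-one H\'enon map (two explicit involutions, essentially the same pair as in the first bullet) and $D=(\delta z_0,z_1)$ costed at $\leq 8$ via Lemma \ref{lem:J0}, giving $2+8=10\leq 11$, in fact marginally sharper. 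One correction to your closing discussion: the scaling $z_0\mapsto\delta z_0$ \emph{is} a product of two involutions in $\mathrm{PGL}(2,\mathbb{C})$, namely $(z_0\mapsto\delta/z_0)\circ(z_0\mapsto 1/z_0)$; unlike in $\mathrm{GL}(2,\mathbb{C})$, every element of $\mathrm{PGL}(2,\mathbb{C})$ is conjugate to its inverse (this is precisely Lemma \ref{lem:homography}), so the alleged obstacle is illusory and the full budget of Lemma \ref{lem:J0} is not forced. Exploited fully, your route gives $\mathfrak{n}\big(\varphi,\mathrm{Bir}(\mathbb{P}^2_\mathbb{C})\big)\leq 4$ for any generalized H\'enon map; as written, it still establishes the stated bound, so the proof stands.
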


\begin{proof}
Any generalized H\'enon map of jacobian $1$ can be written $\big(z_1,P(z_1)-z_0\big)$ 
and so is the composition of two involutions: 
$(z_1,P(z_1)-z_0)=(z_1,z_0)\circ\big(P(z_1)-z_0,z_1\big)$.

Let $\varphi$ be a generalized H\'enon map; then
\[
\varphi=(z_1,P(z_1)-\delta z_0)=(z_1,z_0)\circ\big(P(z_1)-\delta z_0,z_1\big).
\]
Note that $\big(P(z_1)-\delta z_0,z_1\big)$ is an elementary automorphism; 
therefore 
$\mathfrak{n}\big(\varphi,\mathrm{Bir}(\mathbb{P}^2_\mathbb{C})\big)\leq 1+10=11$ 
(Lemma \ref{lem:affineelementaire}).
\end{proof}

Friedland and Milnor proved that any polynomial automorphism of degree 
$d$ with~$d$ prime is conjugate via an affine automorphism either 
to a generalized H\'enon map or to an elementary automorphism 
(\cite[Corollary 2.7]{FriedlandMilnor}). Since any generalized 
H\'enon map is the composition of $(z_1,z_0)\in\mathrm{A}_2$ with an elementary 
map one gets that any polynomial automorphism of degree $d$ with 
$d$ prime can be written as $a_1ea_2$ with $a_i\in\mathrm{A}_2$ and 
$e\in\mathrm{E}_2$. Lemmas \ref{lem:affineelementaire} and 
\ref{lem:henonmaps} thus imply:

\begin{lemma}
If $\varphi\in\mathrm{Aut}(\mathbb{C}^2)$ is of degree $d$ with $d$ 
prime, then 
$\mathfrak{n}\big(\varphi,\mathrm{Bir}(\mathbb{P}^2_\mathbb{C})\big)\leq 26$.
\end{lemma}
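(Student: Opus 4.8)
The plan is to combine the Friedland--Milnor normal form recorded just above the statement with the bounds of Lemma~\ref{lem:affineelementaire}, the whole point being that $8+10+8=26$.

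First I would use that, since $d$ is prime, $\varphi$ is conjugate through an affine automorphism either to a generalized H\'enon map or to an elementary automorphism, so that in either case $\varphi$ admits a factorization $\varphi=a_1\circ e\circ a_2$ with $a_1,a_2\in\mathrm{A}_2$ and $e\in\mathrm{E}_2$. In the elementary case this is just $\varphi=a^{-1}\circ e\circ a$. In the H\'enon case I would write the H\'enon factor as $(z_1,z_0)\circ e'$ with $e'$ elementary (exactly as in the proof of Lemma~\ref{lem:henonmaps}), and then absorb the affine involution $(z_1,z_0)$ into one of the two outer affine factors, recovering the shape $a_1\circ e'\circ a_2$.

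Next I would appeal to subadditivity of $\mathfrak{n}$ under composition: concatenating lists of involutions shows that $\mathfrak{n}\big(\varphi,\mathrm{Bir}(\mathbb{P}^2_\mathbb{C})\big)\leq\mathfrak{n}(a_1,\cdot)+\mathfrak{n}(e,\cdot)+\mathfrak{n}(a_2,\cdot)$, provided every involution used lies inside $\mathrm{Bir}(\mathbb{P}^2_\mathbb{C})$. By Lemma~\ref{lem:affineelementaire} each affine factor contributes $\mathfrak{n}\big(a_i,\mathrm{Aut}(\mathbb{P}^2_\mathbb{C})\big)\leq 8$ and the elementary factor contributes $\mathfrak{n}\big(e,\mathrm{J}_2\big)\leq 10$; since $\mathrm{Aut}(\mathbb{P}^2_\mathbb{C})=\mathrm{PGL}(3,\mathbb{C})$ and $\mathrm{J}_2$ both sit inside $\mathrm{Bir}(\mathbb{P}^2_\mathbb{C})$, all the involutions produced are genuine Cremona involutions. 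Summing then yields $\mathfrak{n}\big(\varphi,\mathrm{Bir}(\mathbb{P}^2_\mathbb{C})\big)\leq 8+10+8=26$.

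There is essentially no obstacle here beyond bookkeeping: the only care needed is to track the ambient groups and confirm that the bounds $8$ and $10$, originally phrased relative to $\mathrm{Aut}(\mathbb{P}^2_\mathbb{C})$ and $\mathrm{J}_2$, remain valid for $\mathfrak{n}(\cdot,\mathrm{Bir}(\mathbb{P}^2_\mathbb{C}))$, which is automatic from the stated inclusions. I would also remark that routing the H\'enon case instead through Lemma~\ref{lem:henonmaps} directly (bound $11$ for the H\'enon factor, flanked by two affine factors) would give the slightly worse total $8+11+8=27$; this is precisely why passing through the elementary normal form $a_1\circ e\circ a_2$ is the efficient choice and hits the claimed bound $26$.
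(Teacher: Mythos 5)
Your proposal is correct and follows the paper's own argument: the paper likewise invokes the Friedland--Milnor normal form to write $\varphi=a_1\circ e\circ a_2$ with $a_i\in\mathrm{A}_2$ and $e\in\mathrm{E}_2$ (absorbing the transposition $(z_1,z_0)$ from the H\'enon factor into an affine factor), and then sums the bounds $8+10+8=26$ from Lemma~\ref{lem:affineelementaire}. Your closing remark explaining why one should not route the H\'enon case through the bound $11$ of Lemma~\ref{lem:henonmaps} is a sensible clarification of the same computation.
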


A sequence $(\varphi_1,\varphi_2,\ldots,\varphi_k)$ of length 
$k\geq 1$ is a \textsl{reduced word}, representing the group element 
$\varphi=\varphi_k\circ\varphi_{k-1}\circ\ldots\circ\varphi_1$ if 
\begin{itemize}
\item each factor $\varphi_i$ belongs to either $\mathrm{A}_2$ or 
$\mathrm{E}_2$ but not to the intersection $\mathrm{A}_2\cap\mathrm{E}_2$, 

\item and no two consecutive factors belong to the same subgroup 
$\mathrm{A}_2$ or $\mathrm{E}_2$. 
\end{itemize}
It follows from Theorem \ref{thm:jung} 
that every element of $\mathrm{Aut}(\mathbb{C}^2)$ can be expressed as 
such a reduced word, unless it belongs to the intersection 
$\mathrm{S}_2=\mathrm{A}_2\cap\mathrm{E}_2$. The degree of any reduced 
word $\varphi=\varphi_k\circ\varphi_{k-1}\circ\ldots\circ\varphi_1$ is 
equal to the product of the degree of the factor $\varphi_i$ 
(\emph{see} \cite[Theorem 2.1]{FriedlandMilnor}). Hence take 
$\varphi\in\mathrm{Aut}(\mathbb{C}^2)$ of degree $d\geq 2$, then 
$\varphi$ is a reduced word 
$\varphi_k\circ\varphi_{k-1}\circ\ldots\circ\varphi_1$ and
\begin{itemize}
\item either there exists only one $\varphi_i$ of degree $>1$, then 
$\varphi=\varphi_3\circ\varphi_2\circ\varphi_1$ with $\deg\varphi_2>1$
and $\deg\varphi_1=\deg\varphi_3=1$; as a result
$\mathfrak{n}\big(\varphi,\mathrm{Bir}(\mathbb{P}^2_\mathbb{C})\big)\leq 26$
(Lemma \ref{lem:affineelementaire}),

\item or there exits at least two $\varphi_i$'s of degree $>1$, then 
$\mathfrak{n}\big(\varphi,\mathrm{Bir}(\mathbb{P}^2_\mathbb{C})\big)\leq \frac{9d}{4}+44$. 
Indeed let $(a_0,e_1,a_1,e_2,a_2,\ldots,e_k,a_k)$ be a reduced word 
representing $\varphi$. Any $e_i$ has degree $\geq 2$ and 
$\deg \varphi=\deg  e_1\deg e_2\prod_{i=3}^k\deg e_i$ hence 
$\prod_{i=3}^k\deg e_i\leq \frac{d}{4}$ and so $2(k-2)\leq \frac{d}{4}$. 
As a result $k\leq \frac{d}{8}+2$ and 
\begin{eqnarray*}
\mathfrak{n}\big(\varphi,\mathrm{Bir}(\mathbb{P}^2_\mathbb{C})\big)&\leq& (k+1)\mathfrak{n}\big(a_i,\mathrm{Bir}(\mathbb{P}^2_\mathbb{C})\big)+k\mathfrak{n}\big(e_i,\mathrm{Bir}(\mathbb{P}^2_\mathbb{C})\big)\\
&\leq& \left(\frac{d}{8}+3\right)\mathfrak{n}\big(a_i,\mathrm{Bir}(\mathbb{P}^2_\mathbb{C})\big)+\left(\frac{d}{8}+2\right)\mathfrak{n}\big(e_i,\mathrm{Bir}(\mathbb{P}^2_\mathbb{C})\big).
\end{eqnarray*}
\end{itemize}

One can thus state 

\begin{theorem}
Let $\varphi$ be a polynomial automorphism of $\mathbb{C}^2$ of 
degree $d$.
\begin{itemize}
\item If $\varphi$ is affine, 
$\mathfrak{n}\big(\varphi,\mathrm{Aut}(\mathbb{P}^2_\mathbb{C})\big)\leq 8$;

\item if $\varphi$ is elementary, then 
$\mathfrak{n}\big(\varphi,\mathrm{J}_2\big)\leq 10$;

\item if $\varphi$ is generalized H\'enon map, then either it 
is of jacobian $1$ and 
$\mathfrak{n}\big(\varphi,\mathrm{Aut}(\mathbb{C}^2)\big)=2$ 
or 
$\mathfrak{n}\big(\varphi,\mathrm{Bir}(\mathbb{P}^2_\mathbb{C})\big)\leq 11$;

\item if $d$ is prime, then 
$\mathfrak{n}\big(\varphi,\mathrm{Bir}(\mathbb{P}^2_\mathbb{C})\big)\leq 26$;

\item otherwise 
$\mathfrak{n}\big(\varphi,\mathrm{Bir}(\mathbb{P}^2_\mathbb{C})\big)\leq\frac{9d}{4}+44$.
\end{itemize}
\end{theorem}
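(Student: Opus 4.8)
The plan is to assemble the five bounds from the results established above, treating each class of automorphism in turn. The first two items are nothing but Lemma~\ref{lem:affineelementaire}: an affine map lies in $\mathrm{Aut}(\mathbb{P}^2_\mathbb{C})$ and needs $\leq 8$ involutions by Lemma~\ref{lem:transvection}, while an elementary map is handled inside $\mathrm{J}_2$ with $\leq 10$ involutions via Corollary~\ref{cor:jonquieres}. The generalized H\'enon case is exactly Lemma~\ref{lem:henonmaps}, where one writes $\varphi=(z_1,z_0)\circ(P(z_1)-\delta z_0,z_1)$ and observes that $(z_1,z_0)$ is itself an involution, the remaining factor being elementary (and an involution when $\delta=1$, giving $\mathfrak{n}=2$).

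For the prime-degree item I would invoke the Friedland--Milnor normal form: an automorphism of prime degree $d$ is affinely conjugate either to a generalized H\'enon map or to an elementary one, hence can be arranged as a product $a_1\circ e\circ a_2$ with $a_1,a_2\in\mathrm{A}_2$ and $e\in\mathrm{E}_2$. Summing the per-factor counts gives $2\cdot 8+10=26$, the asserted bound.

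The remaining (\emph{otherwise}) item is the only one requiring an estimate rather than a direct citation, and is the heart of the argument. Using the amalgamated structure of Theorem~\ref{thm:jung}, write $\varphi$ of degree $d\geq 2$ as a reduced word $(a_0,e_1,a_1,\ldots,e_k,a_k)$ alternating affine and elementary factors, so that $\varphi=a_k\circ e_k\circ\cdots\circ e_1\circ a_0$; here the multiplicativity of degree under reduced words gives $\deg\varphi=\prod_{i=1}^k\deg e_i$, and each elementary factor has $\deg e_i\geq 2$ since a factor of degree $1$ would lie in $\mathrm{S}_2=\mathrm{A}_2\cap\mathrm{E}_2$. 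When at least two of the $e_i$ have degree $>1$ one controls the word length: from $\deg e_1\deg e_2\geq 4$ one gets $\prod_{i\geq 3}\deg e_i\leq d/4$, and since each such factor is $\geq 2$ one has $2(k-2)\leq 2^{k-2}\leq d/4$, whence $k\leq d/8+2$. Counting $k+1$ affine factors at $\leq 8$ involutions each and $k$ elementary factors at $\leq 10$ involutions each yields $\mathfrak{n}\leq 18k+8\leq 9d/4+44$.

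The main obstacle is precisely this last bounding step: everything hinges on converting the \emph{product} formula $\deg\varphi=\prod\deg e_i$ into a \emph{linear} bound on the number of involutions, which succeeds only because each nontrivial elementary factor contributes at least a factor $2$ to the degree while contributing a bounded number of involutions. Once the length is pinned to $k\leq d/8+2$, the final arithmetic is routine, and the theorem follows by collecting the five cases.
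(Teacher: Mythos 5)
Your proposal is correct and follows essentially the same route as the paper: the first four items are read off from Lemma~\ref{lem:affineelementaire}, Lemma~\ref{lem:henonmaps} and the Friedland--Milnor normal form, and the last item uses the same reduced-word argument in the amalgam of Theorem~\ref{thm:jung}, with the identical chain $2(k-2)\leq 2^{k-2}\leq\prod_{i\geq 3}\deg e_i\leq d/4$ giving $k\leq d/8+2$ and hence $18k+8\leq \frac{9d}{4}+44$. The only cosmetic difference is that the paper separates out the subcase of a single nonlinear elementary factor (bound $26$) before running the length estimate, whereas you absorb it into the general count, which is harmless.
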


\begin{corollary}
If $\varphi$ is a polynomial automorphism of $\mathbb{C}^2$ of 
degree $d$, then 
$\mathfrak{n}\big(\varphi,\mathrm{Bir}(\mathbb{P}^2_\mathbb{C})\big)\leq\frac{9d}{4}+44$.
\end{corollary}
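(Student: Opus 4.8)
The plan is to read off the Corollary directly from the preceding Theorem; no new argument is required beyond checking that the degree-dependent bound $\frac{9d}{4}+44$ dominates all of the case-by-case bounds. First I would recall that every polynomial automorphism of $\mathbb{C}^2$ is tame (Theorem \ref{thm:jung}), so that an arbitrary $\varphi\in\mathrm{Aut}(\mathbb{C}^2)$ of degree $d$ necessarily falls into one of the five situations covered by the Theorem: $\varphi$ is affine, elementary, a generalized H\'enon map, of prime degree, or none of these. Thus the case analysis there is exhaustive, and each $\varphi$ receives one of the bounds $8$, $10$, $11$, $26$, or $\frac{9d}{4}+44$.

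Next I would normalize all of these bounds so that they refer to the single group $\mathrm{Bir}(\mathbb{P}^2_\mathbb{C})$. Since $\mathrm{Aut}(\mathbb{P}^2_\mathbb{C})$, $\mathrm{J}_2$, and $\mathrm{Aut}(\mathbb{C}^2)$ are all subgroups of $\mathrm{Bir}(\mathbb{P}^2_\mathbb{C})$, every involution of such a subgroup $\mathrm{H}$ is in particular an involution of $\mathrm{Bir}(\mathbb{P}^2_\mathbb{C})$; hence any factorization of $\varphi$ into involutions of $\mathrm{H}$ is also a factorization in $\mathrm{Bir}(\mathbb{P}^2_\mathbb{C})$. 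This gives $\mathfrak{n}\big(\varphi,\mathrm{Bir}(\mathbb{P}^2_\mathbb{C})\big)\leq\mathfrak{n}\big(\varphi,\mathrm{H}\big)$ for each of these subgroups, so the bounds $8$, $10$, $11$, and $26$ all transfer upward to bounds for $\mathfrak{n}\big(\varphi,\mathrm{Bir}(\mathbb{P}^2_\mathbb{C})\big)$.

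Finally I would carry out the (entirely routine) numerical comparison. For every degree $d\geq 1$ one has $\frac{9d}{4}+44\geq\frac{9}{4}+44>44$, whereas each of the constants $8$, $10$, $11$, $26$ is strictly smaller than $44$; and in the one remaining ``otherwise'' case the Theorem already gives the bound $\frac{9d}{4}+44$. Taking the maximum over whichever case applies to $\varphi$ therefore yields $\mathfrak{n}\big(\varphi,\mathrm{Bir}(\mathbb{P}^2_\mathbb{C})\big)\leq\frac{9d}{4}+44$ uniformly. I do not expect any genuine obstacle here: the content is wholly contained in the preceding Theorem, and the only point deserving explicit mention is that its list of cases is exhaustive, which is precisely what the tameness of $\mathrm{Aut}(\mathbb{C}^2)$ guarantees.
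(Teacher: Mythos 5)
Your proposal is correct and matches the paper's (implicit) argument: the corollary is stated without a separate proof precisely because it follows from the preceding Theorem by observing that every case-specific bound ($8$, $10$, $11$, $26$, or $2$ for Hénon maps of jacobian $1$) transfers to $\mathrm{Bir}(\mathbb{P}^2_\mathbb{C})$ and is dominated by $\frac{9d}{4}+44$. Your remark that exhaustiveness of the case analysis rests on Theorem \ref{thm:jung} is exactly the point the paper uses in the discussion leading up to the Theorem.
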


\subsection{Birational maps}

\begin{theorem}\label{thm:main}
If $\varphi\in\mathrm{Bir}(\mathbb{P}^2_\mathbb{C})$ is of degree $d$, 
then 
$\mathfrak{n}\big(\varphi,\mathrm{Bir}(\mathbb{P}^2_\mathbb{C})\big)\leq 10d-2$.
\end{theorem}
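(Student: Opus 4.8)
The plan is to reduce an arbitrary $\varphi\in\mathrm{Bir}(\mathbb{P}^2_\mathbb{C})$ of degree $d$ to a composition of de Jonquières maps and automorphisms, each of which we already know how to decompose into involutions by Corollary \ref{cor:jonquieres} and Lemma \ref{lem:affineelementaire}. The natural tool is the classical Noether–Castelnuovo theorem, which asserts that $\mathrm{Bir}(\mathbb{P}^2_\mathbb{C})$ is generated by $\mathrm{PGL}(3,\mathbb{C})$ together with the standard quadratic involution $\sigma_2$; equivalently, every $\varphi$ factors as a product of quadratic (de Jonquières) transformations. So the first step is to write $\varphi$ as a word in $\mathrm{Aut}(\mathbb{P}^2_\mathbb{C})$ and quadratic maps, and to control the length of that word in terms of $d$.

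Next I would make the length bound explicit. The key quantitative input is that a degree $d$ Cremona map can be factored as a product of roughly $d-1$ (or on the order of $d$) quadratic transformations: each quadratic transformation based at three points drops the degree in a controlled way, and an induction on $d$ using the behaviour of base points under composition with $\sigma_2$-type maps yields a factorization $\varphi=A_0\circ\tau_1\circ A_1\circ\tau_2\circ\cdots$ where each $\tau_i$ is conjugate to $\sigma_2$ and the $A_i$ lie in $\mathrm{PGL}(3,\mathbb{C})$. Counting carefully, one expects about $d-1$ quadratic factors interleaved with automorphisms. Each quadratic involution $\tau_i$ is itself a single involution (cost $1$), but the interleaving automorphisms must then be absorbed: rather than paying $8$ per automorphism via Lemma \ref{lem:transvection}, the efficient route is to group each $\tau_i$ with an adjacent automorphism so that the pair becomes a de Jonquières map, then invoke Corollary \ref{cor:jonquieres} to spend at most $10$ involutions per such block. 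Balancing the arithmetic so that the total is $10d-2$ is the bookkeeping heart of the argument.

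Concretely, I would aim for a factorization into $d-1$ de Jonquières maps (each of degree $\leq 2$, coming from one quadratic transformation together with a neighbouring linear map) followed by a single residual automorphism. Corollary \ref{cor:jonquieres} gives $\mathfrak{n}\leq 10$ for each de Jonquières block, and Lemma \ref{lem:affineelementaire} (or rather Lemma \ref{lem:transvection}(2) giving the bound $8$ for $\mathrm{PGL}(3,\mathbb{C})$) handles the leftover automorphism. Summing $10(d-1)$ for the blocks and $8$ for the final automorphism gives $10d-2$, which matches the stated bound exactly. The decomposition into exactly $d-1$ quadratic factors is the classical sharp form of Noether's inequality for the number of quadratic transformations needed.

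The hard part will be establishing the sharp length estimate: Noether–Castelnuovo by itself gives \emph{some} factorization into quadratics, but the naive length is not obviously linear in $d$, and one must use the theory of base points and multiplicities (Noether's inequality $d_1+d_2+d_3\geq d+1$ for the three highest-multiplicity base points, ensuring a quadratic transformation strictly decreases the degree) to run a clean induction producing at most $d-1$ quadratic maps. Organizing these factors so that each merges with an adjacent automorphism into a genuine de Jonquières map — rather than forcing a separate, costly $\mathrm{PGL}(3,\mathbb{C})$ decomposition at every step — is the delicate point that makes the constant come out to $10d-2$ rather than something larger.
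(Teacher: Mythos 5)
Your bookkeeping at the end ($10(d-1)$ for $d-1$ de Jonquières blocks plus $8$ for a residual automorphism, via Corollary \ref{cor:jonquieres} and Lemma \ref{lem:transvection}) is exactly the paper's, but the factorization you propose to feed into it has a genuine gap. You want to produce at most $d-1$ \emph{quadratic} transformations by an induction driven by Noether's inequality, performing at each step a quadratic transformation based at the three base points of highest multiplicity. This is precisely the flawed step in Noether's original argument: those three points need not be proper points of $\mathbb{P}^2_\mathbb{C}$ in general position --- they can be infinitely near, or aligned so that the quadratic map based at them does not exist or does not strictly drop the degree. No clean induction of the kind you describe is available, and a bound of $d-1$ on the number of quadratic factors of a degree $d$ Cremona map is not a theorem you can cite. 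You correctly flag this as ``the hard part,'' but the route you sketch for it does not close.

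The fix --- and what the paper actually does --- is to run the degree-reduction with de Jonquières maps of \emph{arbitrary} degree rather than with quadratic maps: by Castelnuovo (see also Theorem 8.3.4 of Alberich-Carrami\~nana), for any $\varphi$ of degree $d\geq 2$ there is a de Jonquières map $\psi$ with $\deg(\varphi\circ\psi)<d$. Iterating gives
\[
\varphi=A\circ\big(\psi_1\circ j_1\circ\psi_1^{-1}\big)\circ\ldots\circ\big(\psi_k\circ j_k\circ\psi_k^{-1}\big)
\]
with $A\in\mathrm{PGL}(3,\mathbb{C})$, $j_\ell\in\mathrm{J}_2$ and $k\leq d-1$. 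The point that rescues the arithmetic is that Corollary \ref{cor:jonquieres} charges at most $10$ involutions to a de Jonquières map \emph{independently of its degree} (the conjugating map $\psi_\ell$ is absorbed into the involutions), so allowing high-degree de Jonquières factors costs nothing, and one still gets $10(d-1)+8=10d-2$. If you replace your quadratic-factorization step by this de Jonquières degree-reduction, the rest of your argument goes through verbatim.
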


Before proving Theorem \ref{thm:main} let us give a first and "bad" 
bound. Let $\varphi$ be a birational self map of 
$\mathbb{P}^2_\mathbb{C}$ of degree $d$. The number of base points of
 $\varphi$ is $\leq 2d-1$ and the map $\varphi$ can be written with 
$\leq 2(2d-1)$ blow ups. Since a blow up can be written as 
$A_1\circ\sigma_2\circ A_2\circ\sigma_2\circ A_3$ with 
$A_i\in\mathrm{PGL}(3,\mathbb{C})$ the map $\varphi$ can be written 
with $4(2d-1)$ involution $\sigma_2$ and $4(2d-1)+1$ elements of 
$\mathrm{PGL}(3,\mathbb{C})$. As a consequence $\varphi$ can be 
written as a composition of $\leq 4(2d-1)+8\big(4(2d-1)+1\big)=72d-28$ 
involutions.

\begin{proof}[Proof of Theorem \ref{thm:main}]
Let us recall that if $\varphi$ is a birational self map of 
$\mathbb{P}^2_\mathbb{C}$ of degree $d$, then there exists a de Jonquières
 map $\psi$ of $\mathbb{P}^2_\mathbb{C}$ such that $\deg (\varphi\circ\psi)<d$ 
(\emph{see} \cite{Castelnuovo}, \cite[Theorem 8.3.4]{AlberichCarraminana}).

As a result any $\varphi\in\mathrm{Bir}(\mathbb{P}^2_\mathbb{C})$ of 
degree $d\geq 1$ can be written as follows
\[
A\circ\big(\psi_1\circ j_1\circ\psi_1^{-1}\big)\circ\big(\psi_2\circ j_2\circ\psi_2^{-1}\big)\circ\ldots\circ\big(\psi_k\circ j_k\circ\psi_k^{-1}\big)
\]
with $A$ in $\mathrm{PGL}(3,\mathbb{C})$, 
$\psi_\ell\in\mathrm{Bir}(\mathbb{P}^2_\mathbb{C})$, $j_l$ in $\mathrm{J}_2$ 
and $k\leq d-1$. 

The statement follows from Lemma \ref{lem:transvection} and Corollary 
\ref{cor:jonquieres}.
\end{proof}

\section{Dimension $3$}

\subsection{de Jonquières maps in dimension $3$}

Let us recall that a de Jonquières map $\varphi$ of $\mathbb{P}^2_\mathbb{C}$ 
of degree $d$ is a plane Cremona map satisfying one of the following 
equivalent conditions:
\begin{itemize}
\item there exists a point $\smallO{}\in\mathbb{P}^2_\mathbb{C}$ such
 that the restriction of $\varphi$ to a general line passing through 
$\smallO{}$ maps it birationally to a line passing through $\smallO{}$;

\item $\varphi$ has homaloidal type $(d;d-1,1^{2d-2})$, {\it i.e.} 
$\varphi$ has $2d-1$ base points, one of multiplicity $d-1$ and 
$2d-2$ of multiplicity $1$;

\item up to projective coordinate changes (source and target) 
\[
\varphi=\big(z_0g_{d-1}+g_d:(z_0q_{d-2}+q_{d-1})z_1:(z_0q_{d-2}+q_{d-1})z_2\big)
\]
with $g_{d-1}$, $g_d$, $q_{d-2}$, $q_{d-1}\in\mathbb{C}[z_1,z_2]$ of 
degree $d-1$, resp. $d$, resp. $d-2$, resp. $d-1$. 
\end{itemize}

In \cite{PanSimis} Pan and Simis propose suitable generalizations of 
de Jonquières maps to higher dimensional space $\mathbb{P}^n_\mathbb{C}$,
 $n\geq 3$. More precisely they study elements of the Cremona group 
$\mathrm{Bir}(\mathbb{P}^n_\mathbb{C})$ satisfying a condition akin to 
the first alternative above: for a point 
$\smallO{}\in\mathbb{P}^n_\mathbb{C}$ and a positive integer $k$ they 
consider the Cremona transformations that map a general $k$-dimensional
 linear subspace passing through $\smallO{}$ onto another such subspace. 
Fixing the point $\smallO{}$ these maps form a subgroup 
$\mathrm{J}_{\smallO{}}(k;\mathbb{P}^n_\mathbb{C})$ of 
$\mathrm{Bir}(\mathbb{P}^n_\mathbb{C})$. For any $k\leq\ell$ the
 following inclusion holds (\cite{PanSimis})
\[
\mathrm{J}_{\smallO{}}(\ell;\mathbb{P}^n_\mathbb{C})\subset\mathrm{J}_{\smallO{}}(k;\mathbb{P}^n_\mathbb{C})
\]
Let us recall the following characterization of elements of 
$\mathrm{J}_{\smallO{}}(1;\mathbb{P}^n_\mathbb{C})$:

\begin{proposition}[\cite{Pan}]
Fix $\smallO{}=(0:0:\ldots:0:1)$. A Cremona map 
$\varphi\in\mathrm{Bir}(\mathbb{P}^n_\mathbb{C})$ belongs to 
$\mathrm{J}_{\smallO{}}(1;\mathbb{P}^n_\mathbb{C})$ if and only if 
\[
\varphi=\big(z_0g_{d-1}+g_d:(z_0q_{\ell-1}+q_\ell)t_1:(z_0q_{\ell-1}+q_\ell)t_2:\ldots:(z_0q_{\ell-1}+q_\ell)t_n\big)
\]
where 
\begin{itemize}
\item $g_d$, $g_{d-1}$, $q_\ell$, $q_{\ell-1}$, $t_1$, $\ldots$, $t_n\in\mathbb{C}[z_1,z_2,\ldots,z_n]$,

\item $\deg g_{d-1}=d-1$, $\deg g_d=d$, $\deg q_{\ell-1}=\ell-1$, $\deg q_\ell=\ell$,

\item $\deg t_i=d-\ell$ for $i\in\{1,\ldots,n\}$,

\item $(t_1:t_2:\ldots:t_n)\in\mathrm{Bir}(\mathbb{P}^{n-1}_\mathbb{C})$.
\end{itemize}
\end{proposition}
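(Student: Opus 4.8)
The plan is to prove both implications of the equivalence, reformulating membership in $\mathrm{J}_{\smallO{}}(1;\mathbb{P}^n_\mathbb{C})$ in terms of the fibration induced by projection from $\smallO{}$. Projecting from $\smallO{}$ produces a rational fibration $\rho\colon\mathbb{P}^n_\mathbb{C}\dashrightarrow\mathbb{P}^{n-1}_\mathbb{C}$ whose fibres are exactly the lines through $\smallO{}$; after the coordinate normalisation compatible with the displayed form we may take $\rho=(z_1:z_2:\ldots:z_n)$, so that $z_0$ is the coordinate along the fibres and $z_1,\ldots,z_n$ are coordinates on the base. The condition $\varphi\in\mathrm{J}_{\smallO{}}(1;\mathbb{P}^n_\mathbb{C})$ then says precisely that $\varphi$ sends a general fibre of $\rho$ to a fibre of $\rho$, that is, that there is a map $\overline{\varphi}$ of the base with $\rho\circ\varphi=\overline{\varphi}\circ\rho$; since $\varphi$ is birational and the fibration is $\varphi$-invariant, $\overline{\varphi}$ is itself birational, so $\overline{\varphi}\in\mathrm{Bir}(\mathbb{P}^{n-1}_\mathbb{C})$.

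For the implication ``displayed form $\Rightarrow$ membership'' I would argue by direct substitution. Writing $\varphi=(F_0:F_1:\ldots:F_n)$ with the $F_i$ as in the statement, one checks first that each $F_i$ is homogeneous of degree $d$: indeed $\deg(z_0g_{d-1})=\deg g_d=d$, while $\deg\big((z_0q_{\ell-1}+q_\ell)t_i\big)=\ell+(d-\ell)=d$. The last $n$ coordinates share the common factor $z_0q_{\ell-1}+q_\ell$, hence for $1\leq i,j\leq n$ one has $F_i/F_j=t_i/t_j$, a function of $z_1,\ldots,z_n$ only. Thus $\rho\circ\varphi$ factors through $\rho$, so $\varphi$ carries each fibre of $\rho$ into a fibre, with induced base map $\overline{\varphi}=(t_1:\ldots:t_n)$; restricting to a fibre over a general base point the first coordinate against the common factor realises a Möbius map in the fibre coordinate $z_0$, confirming that $\varphi$ preserves lines through $\smallO{}$.

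The substantive direction is the converse. Start from $\varphi=(F_0:F_1:\ldots:F_n)$ with the $F_i$ homogeneous of degree $d$ and without common factor, and assume $\varphi\in\mathrm{J}_{\smallO{}}(1;\mathbb{P}^n_\mathbb{C})$. Fibration-invariance means the ratios $F_i/F_j$ ($1\leq i,j\leq n$) are independent of the fibre coordinate $z_0$; this forces $F_1,\ldots,F_n$ to share a homogeneous common factor $Q=Q(z_0,\ldots,z_n)$ with $F_i=Q\,t_i$, where $t_i\in\mathbb{C}[z_1,\ldots,z_n]$ and $(t_1:\ldots:t_n)=\overline{\varphi}\in\mathrm{Bir}(\mathbb{P}^{n-1}_\mathbb{C})$; set $\ell=\deg Q$, so $\deg t_i=d-\ell$. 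Next I would restrict $\varphi$ to a general fibre $L\cong\mathbb{P}^1_\mathbb{C}$: because $\varphi$ is birational and preserves the fibration, $\varphi|_L$ is a birational self-map of $\mathbb{P}^1_\mathbb{C}$, hence lies in $\mathrm{PGL}(2,\mathbb{C})$ and is therefore of degree one in $z_0$. This degree-one dependence on $z_0$, holding for a general fibre and hence identically, forces both $Q$ and $F_0$ to be affine-linear in $z_0$, giving $Q=z_0q_{\ell-1}+q_\ell$ and $F_0=z_0g_{d-1}+g_d$ with coefficients in $\mathbb{C}[z_1,\ldots,z_n]$. The degree bookkeeping (homogeneity of degree $d$ together with $\deg Q=\ell$) then pins down $\deg q_{\ell-1}=\ell-1$, $\deg q_\ell=\ell$, $\deg g_{d-1}=d-1$, $\deg g_d=d$, which is the asserted form.

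I expect the converse to be the main obstacle, and within it two delicate points: (i) justifying that $F_1,\ldots,F_n$ genuinely share a single factor $Q$ that is \emph{affine-linear} in $z_0$, rather than exhibiting a more complicated dependence, which is exactly where the ``degree one on a general fibre'' input from $\mathrm{PGL}(2,\mathbb{C})$ is essential; and (ii) verifying that after dividing out $Q$ the tuple $(t_1:\ldots:t_n)$ is honestly birational of degree $d-\ell$ and that the factorisation $F_i=Q\,t_i$ is compatible with the normalisation that the $F_i$ have no common factor (so no accidental factor is introduced or lost). Both points are handled by combining the $\varphi$-invariance of the fibration with the birationality of $\varphi$ and the structure of $\mathrm{PGL}(2,\mathbb{C})$ acting along the fibres.
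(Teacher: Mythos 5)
This proposition is quoted in the paper from \cite{Pan} and is given no proof there, so there is nothing internal to compare your argument against; it has to be judged on its own. Your argument is the natural one and is essentially correct: project from $\smallO{}$, observe that membership in $\mathrm{J}_{\smallO{}}(1;\mathbb{P}^n_\mathbb{C})$ is equivalent to the existence of a (necessarily birational) descended map $\overline{\varphi}$ on the base, read off the common factor $Q$ of the last $n$ coordinates from $F_it_j=F_jt_i$ and coprimality of the $t_i$, and use that $\varphi$ restricted to a general line is an isomorphism onto its image line to force Möbius dependence on the fibre coordinate. Two points deserve to be made explicit rather than left as ``delicate points to be handled''. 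First, the step from ``$F_0/Q$ is Möbius in $z_0$ on a general fibre'' to ``$F_0$ and $Q$ are affine-linear in $z_0$'' needs the observation that $Q=\gcd(F_1,\ldots,F_n)$ and $\gcd(F_0,\ldots,F_n)=1$ imply $\gcd(F_0,Q)=1$, hence (Gauss) $F_0$ and $Q$ are coprime as polynomials in $z_0$ over $\mathbb{C}(z_1,\ldots,z_n)$, so no cancellation can hide higher $z_0$-degree; without this the conclusion genuinely fails. Second, there is a coordinate mismatch you silently absorb: with $\smallO{}=(0:\ldots:0:1)$ the lines through $\smallO{}$ are obtained by fixing $(z_0:\ldots:z_{n-1})$, whereas the displayed normal form (Möbius in $z_0$, base coordinates $z_1,\ldots,z_n$) is adapted to $\smallO{}=(1:0:\ldots:0)$; this is an inconsistency in the statement as reproduced here rather than in your proof, but it should be acknowledged rather than papered over by ``the coordinate normalisation compatible with the displayed form''. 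With those two items spelled out, the proof is complete.
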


\begin{theorem}\label{thm:jonquieres}
Let $\varphi$ be an element of 
$\mathrm{J}_{\smallO{}}(1,\mathbb{P}^3_\mathbb{C})$ of degree $d$; then 
$\mathfrak{n}\big(\varphi,\mathrm{Bir}(\mathbb{P}^3_\mathbb{C})\big)\leq 10d+6$.
\end{theorem}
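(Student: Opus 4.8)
The plan is to factor $\varphi$ into a \emph{fibre part} and a \emph{base part}, estimate each with the tools already at hand, and then control the degree of the base map. Working in the affine chart $z_3=1$, the description of $\mathrm{J}_{\smallO{}}(1;\mathbb{P}^3_\mathbb{C})$ recalled in the introduction gives
\[
\varphi=\big(\varphi_0(z_0,z_1,z_2),\,\psi(z_1,z_2)\big),\qquad \varphi_0\in\mathrm{PGL}(2,\mathbb{C}[z_1,z_2]),\quad \psi\in\mathrm{Bir}(\mathbb{P}^2_\mathbb{C}).
\]
First I would split $\varphi=h\circ g$ with $g=\big(\varphi_0(z_0,z_1,z_2),z_1,z_2\big)$ and $h=\big(z_0,\psi(z_1,z_2)\big)$; substituting $g$ into $h$ recovers $\varphi$, so this factorisation is valid.

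For the fibre part, $g$ is by construction an element of $\mathrm{PGL}(2,\mathbb{C}[z_1,z_2])$ acting on the coordinate $z_0$ with coefficients in the base variables, so Lemma \ref{lem:transvection}(3) (with $n=3$) applies verbatim and gives $\mathfrak{n}\big(g,\mathrm{Bir}(\mathbb{P}^3_\mathbb{C})\big)\leq 8$. For the base part the idea is to transfer the two-dimensional result to dimension $3$. By Theorem \ref{thm:main} the plane map $\psi$ factors as $\iota_1\circ\iota_2\circ\ldots\circ\iota_m$ with $m\leq 10\deg\psi-2$ and each $\iota_j$ a birational involution of $\mathbb{P}^2_\mathbb{C}$. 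The key point is that the trivial extension $\big(z_0,\iota_j(z_1,z_2)\big)$ is a birational involution of $\mathbb{P}^3_\mathbb{C}$: it is birational because $\iota_j$ is, and it squares to the identity because $\iota_j^2=\mathrm{id}$ while $z_0$ is merely carried along. Extending each factor this way yields $h=\big(z_0,\iota_1\big)\circ\ldots\circ\big(z_0,\iota_m\big)$, whence $\mathfrak{n}\big(h,\mathrm{Bir}(\mathbb{P}^3_\mathbb{C})\big)\leq 10\deg\psi-2$.

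It remains to bound $\deg\psi$ by $d=\deg\varphi$. Here I would appeal to Pan's normal form stated just above: the base map is $\psi=(t_1:t_2:t_3)$ with $\deg t_i=d-\ell$ and $0\leq\ell\leq d$, so that $\deg\psi\leq d-\ell\leq d$. Assembling the three estimates via subadditivity of $\mathfrak{n}$ under composition gives
\[
\mathfrak{n}\big(\varphi,\mathrm{Bir}(\mathbb{P}^3_\mathbb{C})\big)\leq \mathfrak{n}(g)+\mathfrak{n}(h)\leq 8+(10\deg\psi-2)\leq 10d+6.
\]

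The only genuinely delicate point is the degree comparison $\deg\psi\leq d$: one must verify that passing from $\varphi$ to the map it induces on the fibration base does not raise the degree, which is exactly what Pan's form secures. This is also where the restriction to $\mathbb{P}^3$ is essential, since the base is then $\mathbb{P}^2$ and Theorem \ref{thm:main} is available; in higher dimension the base would be $\mathbb{P}^{n-1}$ and no such two-dimensional input would suffice.
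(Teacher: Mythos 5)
Your proposal is correct and follows essentially the same route as the paper: the same factorisation $\varphi=\big(z_0,\psi\big)\circ\big(\varphi_0,z_1,z_2\big)$, with Lemma \ref{lem:transvection}(3) giving $8$ involutions for the fibre part and Theorem \ref{thm:main} giving $10\deg\psi-2$ for the base part. Your explicit checks that the trivially extended involutions remain involutions of $\mathbb{P}^3_\mathbb{C}$ and that $\deg\psi\leq d$ via Pan's normal form are left implicit in the paper but are welcome additions.
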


If $\mathrm{H}$ is a subgroup of $\mathrm{G}$ let us denote by 
$\mathrm{N}(\mathrm{H};\mathrm{G})$ the normal subgroup generated by 
$\mathrm{H}$ in $\mathrm{G}$.

\begin{corollary}
Any birational map of 
$\mathrm{N}\big(\mathrm{J}_{\smallO{}}(1,\mathbb{P}^3_\mathbb{C});\mathrm{Bir}(\mathbb{P}^3_\mathbb{C})\big)$ 
is a composition of involutions of $\mathrm{Bir}(\mathbb{P}^3_\mathbb{C})$.
\end{corollary}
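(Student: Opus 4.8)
The plan is to exhibit the normal subgroup $\mathrm{N}\big(\mathrm{J}_{\smallO{}}(1,\mathbb{P}^3_\mathbb{C});\mathrm{Bir}(\mathbb{P}^3_\mathbb{C})\big)$ as being contained in the collection of elements of $\mathrm{Bir}(\mathbb{P}^3_\mathbb{C})$ that factor as a composition of involutions, the key observation being that this collection is itself a normal subgroup. First I would denote by $\mathcal{I}$ the subset of $\mathrm{Bir}(\mathbb{P}^3_\mathbb{C})$ consisting of all maps that can be written as a composition of involutions of $\mathbb{P}^3_\mathbb{C}$. This set is a subgroup: the concatenation of two such factorizations is again a factorization into involutions, the identity is a composition of involutions (e.g. $\iota\circ\iota$ for any involution $\iota$), and the inverse of $\iota_1\circ\iota_2\circ\ldots\circ\iota_k$ is $\iota_k\circ\ldots\circ\iota_2\circ\iota_1$, again a composition of involutions.

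Next I would check that $\mathcal{I}$ is normal in $\mathrm{Bir}(\mathbb{P}^3_\mathbb{C})$. The crucial point is that a conjugate of a non-trivial involution is again a non-trivial involution: if $\iota\not=\mathrm{id}$ with $\iota^2=\mathrm{id}$ and $g\in\mathrm{Bir}(\mathbb{P}^3_\mathbb{C})$, then $\big(g\circ\iota\circ g^{-1}\big)^2=g\circ\iota^2\circ g^{-1}=\mathrm{id}$ while $g\circ\iota\circ g^{-1}\not=\mathrm{id}$. Hence for any $\varphi=\iota_1\circ\iota_2\circ\ldots\circ\iota_k\in\mathcal{I}$ and any $g$ one has
\[
g\circ\varphi\circ g^{-1}=\big(g\circ\iota_1\circ g^{-1}\big)\circ\big(g\circ\iota_2\circ g^{-1}\big)\circ\ldots\circ\big(g\circ\iota_k\circ g^{-1}\big),
\]
which is again a composition of involutions, so $g\circ\varphi\circ g^{-1}\in\mathcal{I}$. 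Thus $\mathcal{I}$ is a normal subgroup of $\mathrm{Bir}(\mathbb{P}^3_\mathbb{C})$.

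Finally, Theorem \ref{thm:jonquieres} asserts precisely that every element of $\mathrm{J}_{\smallO{}}(1,\mathbb{P}^3_\mathbb{C})$ can be written as a composition of involutions, which means $\mathrm{J}_{\smallO{}}(1,\mathbb{P}^3_\mathbb{C})\subseteq\mathcal{I}$. Since $\mathcal{I}$ is a normal subgroup of $\mathrm{Bir}(\mathbb{P}^3_\mathbb{C})$ containing $\mathrm{J}_{\smallO{}}(1,\mathbb{P}^3_\mathbb{C})$, and $\mathrm{N}\big(\mathrm{J}_{\smallO{}}(1,\mathbb{P}^3_\mathbb{C});\mathrm{Bir}(\mathbb{P}^3_\mathbb{C})\big)$ is by definition the smallest normal subgroup with this property, we conclude $\mathrm{N}\big(\mathrm{J}_{\smallO{}}(1,\mathbb{P}^3_\mathbb{C});\mathrm{Bir}(\mathbb{P}^3_\mathbb{C})\big)\subseteq\mathcal{I}$, which is exactly the claim. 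I do not expect any genuine obstacle here: the argument is the soft group-theoretic fact that ``compositions of involutions'' form a normal subgroup, and the only substantive input is Theorem \ref{thm:jonquieres}, which supplies membership of the de Jonquières generators in $\mathcal{I}$. The single point deserving care is the verbatim preservation of a factorization under conjugation, which rests on the elementary remark above that conjugation sends non-trivial involutions to non-trivial involutions.
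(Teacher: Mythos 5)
Your argument is correct and is exactly the one the paper intends: the corollary is stated without proof because it follows immediately from Theorem \ref{thm:jonquieres} together with the observation (made explicitly in the remark in the introduction) that the set of compositions of involutions is a normal subgroup of $\mathrm{Bir}(\mathbb{P}^3_\mathbb{C})$, since conjugation preserves involutions. Nothing is missing.
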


\begin{proof}[Proof of Theorem \ref{thm:jonquieres}]
Any $\varphi$ in $\mathrm{J}_{\smallO{}}(1,\mathbb{P}^3_\mathbb{C})$ can 
be written in the affine chart $z_3=1$
\[
\varphi=\left(\frac{z_0A(z_1,z_2)+B(z_1,z_2)}{z_0C(z_1,z_2)+D(z_1,z_2)},\psi(z_1,z_2)\right)
\]
where 
\begin{align*}
&\frac{z_0A(z_1,z_2)+B(z_1,z_2)}{z_0C(z_1,z_2)+D(z_1,z_2)}\in\mathrm{PGL}(2,\mathbb{C}[z_1,z_2]),
&& \psi\in\mathrm{Bir}(\mathbb{P}^2_\mathbb{C}).
\end{align*}

Let us note that
\[
\varphi=\big(z_0,\psi(z_1,z_2)\big)\circ\left(\frac{z_0A(z_1,z_2)+B(z_1,z_2)}{z_0C(z_1,z_2)+D(z_1,z_2)},z_1,z_2\right).
\]
The map $\psi$ can be written as a composition of $\leq 10d-2$ involutions 
(Theorem \ref{thm:main}) and 
$\frac{z_0A(z_1,z_2)+B(z_1,z_2)}{z_0C(z_1,z_2)+D(z_1,z_2)}\in\mathrm{PGL}(2,\mathbb{C}[z_1,z_2])$ 
can be written as a composition of $\leq 8$ involutions (Lemma 
\ref{lem:transvection}). As a result  $\varphi$ is a composition of 
$10d+6$ or fewer involutions.
\end{proof}

\subsection{Maps of small bidegrees}

If $\varphi$ is a birational self map of $\mathbb{P}^3_\mathbb{C}$, then the 
bidegree of $\varphi$ is the pair $(\deg\varphi,\deg \varphi^{-1})$. Let us 
recall that $\deg \varphi^{-1}\leq\big(\deg\varphi\big)^2$. The left-right 
conjugacy is the following one
\begin{align*}
&\mathrm{PGL}(4,\mathbb{C})\times\mathrm{Bir}(\mathbb{P}^3_\mathbb{C})\times\mathrm{PGL}(4,\mathbb{C}) && (A,\varphi,B)\mapsto A\varphi B^{-1}.
\end{align*}

Pan, Ronga and Vust give birational self maps of $\mathbb{P}^3_\mathbb{C}$ 
of bidegree $(2,\cdot)$ up to left-right conjugacy, and show that there 
are only finitely many biclasses 
(\cite[Theorems 3.1.1, 3.2.1, 3.2.2, 3.3.1]{PanRongaVust}). In particular 
they show that the smooth and irreducible variety of birational self maps 
of $\mathbb{P}^3_\mathbb{C}$ of bidegree $(2,\cdot)$ has three irreducible 
components of dimension $26$, $28$, $29$. More precisely the component 
of dimension $26$ (resp. $28$, resp. $29$) corresponds to birational maps 
of bidegree $(2,4)$ (resp. $(2,3)$, resp. $(2,2)$). Let us denote by 
$\mathcal{O}(\varphi)$ the orbit of $\varphi$ under the left-right 
conjugacy.

\begin{proposition}
Let $\varphi$ be a birational self map of $\mathbb{P}^3_\mathbb{C}$ of
 bidegree $(2,2)$. Then $\varphi$ can be written as a composition of 
involutions of $\mathbb{P}^3_\mathbb{C}$. Furthermore 
$\mathfrak{n}\big(\varphi,\mathrm{Bir}(\mathbb{P}^3_\mathbb{C})\big)\leq 23$.
\end{proposition}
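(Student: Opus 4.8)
The plan is to lean on the left-right classification of Pan, Ronga and Vust recalled just above. Since the biclasses of bidegree $(2,2)$ maps are finite in number, I would first fix, for each of them, an explicit representative $\varphi_0$ and write $\varphi=A\circ\varphi_0\circ B$ with $A$, $B\in\mathrm{PGL}(4,\mathbb{C})=\mathrm{Aut}(\mathbb{P}^3_\mathbb{C})$. This separates the problem into three pieces: the two projective factors $A$ and $B$, and the fixed normal form $\varphi_0$. Because a conjugate of an involution is again an involution, and because the inclusion $\mathrm{PGL}(4,\mathbb{C})\subset\mathrm{Bir}(\mathbb{P}^3_\mathbb{C})$ turns projective involutions into involutions of $\mathbb{P}^3_\mathbb{C}$, the three pieces can be counted independently.

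For the projective factors I would apply the second assertion of Lemma \ref{lem:transvection} to $\mathrm{PGL}(4,\mathbb{C})$: since $\mathbb{C}$ is algebraically closed, each of $A$ and $B$ is a composition of at most $2(4+1)=10$ involutions of $\mathbb{P}^3_\mathbb{C}$. This already accounts for $20$ of the allowed involutions and leaves a budget of $3$ for the normal form.

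The \emph{geometric heart} is to realise $\varphi_0$ as a short composition of involutions, and the natural candidate is a de Jonquières involution of $\mathbb{P}^3_\mathbb{C}$ of the kind recalled in Section~2. Starting from a smooth quadric $H$ (degree $\nu=2$) together with a point $\smallO{}$ outside $H$, a general line $\ell$ through $\smallO{}$ meets $H$ in exactly two points $r_m$, $q_m$; sending a point $m\in\ell$ to the point $\mathcal{I}_J(m)\in\ell$ with cross-ratio $-1$ defines a birational involution of $\mathbb{P}^3_\mathbb{C}$. A direct computation on a line through $\smallO{}$ shows that the defining polynomials have degree $2$, so $\mathcal{I}_J$ is a quadro-quadric involution, automatically of bidegree $(2,2)$ and lying in $\mathrm{J}_{\smallO{}}(1;\mathbb{P}^3_\mathbb{C})$. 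The point of this step is to check that, after peeling off the already-extracted left-right factors, each PRV normal form of bidegree $(2,2)$ agrees with such an $\mathcal{I}_J$, or at worst with a composition of two or three of them; this bounds the cost of $\varphi_0$ by $3$.

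Summing the three contributions yields $\mathfrak{n}\big(\varphi,\mathrm{Bir}(\mathbb{P}^3_\mathbb{C})\big)\leq 10+3+10=23$. I expect the middle step to be the real obstacle: the abstract symmetry ``degree of $\varphi$ equals degree of $\varphi^{-1}$'' must be converted into a concrete involutive normal form, and one must make sure that every biclass of the component, not merely the generic $29$-dimensional one, is matched with an explicit de Jonquières involution, so that the uniform bound $23$ holds for all bidegree $(2,2)$ maps.
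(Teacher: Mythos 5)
Your overall architecture is the same as the paper's: reduce to the finitely many Pan--Ronga--Vust normal forms $f_1,\ldots,f_8$ of bidegree $(2,2)$, pay at most $2(4+1)=10$ involutions for each of the two left--right factors in $\mathrm{PGL}(4,\mathbb{C})$ via Lemma \ref{lem:transvection}, and reserve a budget of $3$ involutions for the normal form itself, giving $10+3+10=23$. The bookkeeping is correct and matches the paper ($f_8=\mathrm{id}$ and the involutive forms $f_1,f_2,f_3$ cost $21$; the worst case in the paper is $f_4$ at $23$).

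However, the step you yourself flag as ``the real obstacle'' is precisely the entire content of the proof, and you have not carried it out. It is not automatic that each non-involutive normal form $f_4,\ldots,f_7$ can be written, up to linear factors, as a composition of at most three involutions: composing two or three quadro-quadric involutions \emph{generically} produces a map of degree $4$ or $8$, so the required degree drop only happens when the base loci are in special incidence, and this must be exhibited, not asserted. The paper does exactly this by explicit computation, e.g.
\[
f_4=\big(z_0:z_2+z_3:-z_2:z_1\big)\circ\big(z_0z_2:z_1z_2:z_3^2:z_2z_3\big)\circ\big(z_0z_2:z_1z_3:z_3^2:z_2z_3\big)\circ\big(z_0z_3:z_0^2-z_1z_3:z_2z_3:z_3^2\big)\circ\big(z_0:z_1:-z_3:z_2+z_3\big),
\]
with analogous formulas for $f_5$, $f_6$, $f_7$; the middle factors are quadratic involutions and the outer linear maps are absorbed into the left--right factors. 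Your proposed geometric mechanism (the de Jonqui\`eres involution attached to a quadric $H$ and a point $\smallO{}\notin H$, which is indeed a bidegree $(2,2)$ involution) is a plausible source of such factors, but you give no argument that every one of the four non-involutive biclasses is actually reached by products of at most three of them. Until those four decompositions are produced and checked, the bound $23$ is not established.
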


\begin{proof}
If $\varphi$ is a birational self map of $\mathbb{P}^3_\mathbb{C}$ of 
bidegree $(2,2)$, then up to left-right conjugacy $\varphi$ is one of 
the following (\cite{PanRongaVust})
\begin{small}
\begin{align*}
& f_1=\big(z_0z_3:z_1z_3:z_2z_3:z_0^2-z_1z_2\big)
&& f_2=\big(z_0z_3:z_1z_3:z_2z_3:z_1z_2\big)\\
& f_3=\big(z_0z_3:z_1z_3:z_2z_3:z_2^2\big)
&& f_4=\big(z_0z_3:z_2z_3:z_3^2:z_1z_3-z_0^2+z_1z_2\big)\\
& f_5=\big(z_0z_3:z_2z_3:z_3^2:z_1z_3-z_1z_2\big)
&& f_6=\big(z_0z_3:z_2z_3:z_3^2:z_1z_3-z_2^2\big)\\
& f_7=\big(z_0z_3-z_1z_2:z_1z_3:z_2z_3:z_3^2\big)
&& f_8=\big(z_0z_3:z_1z_3:z_2z_3:z_3^2\big)
\end{align*}
\end{small}

Note that $f_8=\mathrm{id}$, and that $f_1$, $f_2$, $f_3$ are involutions. 
Any element $\psi$ in $\mathcal{O}(f_i)$, $i\in\{1,\,2,\,3,\,8\}$, satisfies 
$\mathfrak{n}\big(\psi,\mathrm{Bir}(\mathbb{P}^3_\mathbb{C})\big)\leq~21$. 
The other $f_i$ are de Jonquières maps of $\mathbb{P}^3_\mathbb{C}$ so 
according to Theorem 
\ref{thm:jonquieres} can be written as compositions of involutions. 
Nevertheless to find a better bound for 
$\mathfrak{n}\big(\varphi,\mathrm{Bir}(\mathbb{P}^2_\mathbb{C})\big)$ we 
will give explicit decompositions.

First
\begin{small}
\begin{eqnarray*}
f_4 &=&\big(z_0:z_2+z_3:-z_2:z_1\big)\circ\big(z_0z_2:z_1z_2:z_3^2:z_2z_3\big)\circ\big(z_0z_2:z_1z_3:z_3^2:z_2z_3\big)\circ\\
& & \big(z_0z_3:z_0^2-z_1z_3:z_2z_3:z_3^2\big)\circ\big(z_0:z_1:-z_3:z_2+z_3\big)
\end{eqnarray*}
\end{small}
hence for any $\psi\in\mathcal{O}(f_4)$ one has 
$\mathfrak{n}\big(\psi,\mathrm{Bir}(\mathbb{P}^3_\mathbb{C})\big)\leq 23$ 
(which corresponds to two elements in $\mathrm{PGL}(4,\mathbb{C})$ and 
three involutions).

Second
\begin{small}
\[
f_5=(z_0:z_3-z_2:z_3:z_1)\circ(z_0z_2:z_1z_3:z_3^2:z_2z_3)\circ(z_0z_2:z_1z_2:z_3^2:z_2z_3)\circ(z_0:z_1:z_3-z_2:z_3).
\]
\end{small}
As a consequence 
$\mathfrak{n}\big(\psi,\mathrm{Bir}(\mathbb{P}^3_\mathbb{C})\big)\leq 22$
for any $\psi\in\mathcal{O}(f_5)$.

Third
\begin{small}
\[
f_6=\big(z_0:z_1:z_3:-z_2\big)\circ\big(z_0z_3:z_1z_3:z_1^2-z_2z_3:z_3^2\big)\circ\big(z_0:z_2:z_1:z_3\big).
\]
\end{small}
Therefore for any $\psi\in\mathcal{O}(f_6)$ one has the inequality 
$\mathfrak{n}\big(\psi,\mathrm{Bir}(\mathbb{P}^3_\mathbb{C})\big)\leq 21$.

Last
\begin{small}
\begin{eqnarray*}
f_7&=&\big(-z_0:z_1:z_2:z_3\big)\circ\big(-z_0z_3+z_1z_2:z_1z_3:z_2z_3:z_3^2\big).
\end{eqnarray*}
\end{small}
So $\mathfrak{n}\big(\psi,\mathrm{Bir}(\mathbb{P}^3_\mathbb{C})\big)\leq 21$
for any $\psi\in\mathcal{O}(f_7)$.
\end{proof}

\begin{proposition}
Any birational self map $\varphi$ of $\mathbb{P}^3_\mathbb{C}$ of 
bidegree $(2,3)$ can be written as a composition of involutions 
of $\mathbb{P}^3_\mathbb{C}$; moreover 
$\mathfrak{n}\big(\varphi,\mathrm{Bir}(\mathbb{P}^3_\mathbb{C})\big)\leq 30$.
\end{proposition}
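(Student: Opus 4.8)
The plan is to reproduce, in bidegree $(2,3)$, the argument just carried out in bidegree $(2,2)$. The starting point is again the Pan--Ronga--Vust classification (\cite{PanRongaVust}): up to the left--right conjugacy $(A,\varphi,B)\mapsto A\varphi B^{-1}$ the birational self maps of $\mathbb{P}^3_\mathbb{C}$ of bidegree $(2,3)$ fall into finitely many biclasses, filling the $28$-dimensional component. It therefore suffices to treat one representative $g$ of each biclass and to exhibit for it a factorization
\[
g=P_1\circ\iota_1\circ\iota_2\circ\ldots\circ\iota_m\circ P_2,\qquad P_1,\,P_2\in\mathrm{PGL}(4,\mathbb{C}),
\]
with each $\iota_k$ an involution of $\mathbb{P}^3_\mathbb{C}$. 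Such a shape is exactly what makes the bound uniform on an orbit: for $\psi=AgB^{-1}$ one absorbs $A$ into $P_1$ and $B^{-1}$ into $P_2$, so that by Lemma \ref{lem:transvection}, which costs at most $10$ involutions for an element of $\mathrm{PGL}(4,\mathbb{C})$, one gets $\mathfrak{n}\big(\psi,\mathrm{Bir}(\mathbb{P}^3_\mathbb{C})\big)\leq 2\cdot 10+m=20+m$ for every $\psi\in\mathcal{O}(g)$. Reaching the stated bound thus amounts to writing each representative with at most ten involutions and with both linear factors pushed to the ends, since an interior element of $\mathrm{PGL}(4,\mathbb{C})$ would cost $10$ more and break the estimate.

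Next I would go through the representatives one by one. Some of them are de Jonquières maps of degree $\deg g=2$, hence already compositions of involutions by Theorem \ref{thm:jonquieres}; but the crude estimate $10+26+10$ coming from that theorem does not control the orbit within $30$, so---exactly as was done for $f_4,\ldots,f_7$ in bidegree $(2,2)$---I would in every case produce an explicit word of the type above. The building blocks are the same as before: the quadratic monomial involutions such as $(z_0z_2:z_1z_2:z_3^2:z_2z_3)$ and the Jonquières-type quadratic involutions such as $(z_0z_3:z_0^2-z_1z_3:z_2z_3:z_3^2)$, conjugated by linear maps so that consecutive quadratic factors telescope onto the prescribed quadro-cubic normal form.

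The main obstacle is this last step. For each of the finitely many quadro-cubic biclasses one must write down a fully explicit string $P_1\circ\iota_1\circ\ldots\circ\iota_m\circ P_2$ and verify by direct composition that it equals $g$, all the while keeping $m\leq 10$ and admitting no interior linear factor, so that $20+m\leq 30$. This bookkeeping is genuinely heavier than in bidegree $(2,2)$: since now $\deg\varphi^{-1}=3$ rather than $2$, a general line through $\smallO{}$ meets the relevant hypersurfaces in a configuration of higher degree, and the quadratic involutions required to resolve it accordingly come in longer strings---which is precisely the source of the extra involutions and of the increment from $23$ to $30$. Once these explicit identities are in place, summing $2\cdot 10$ for the two linear factors and $1$ per involution yields $\mathfrak{n}\big(\varphi,\mathrm{Bir}(\mathbb{P}^3_\mathbb{C})\big)\leq 30$ on every orbit, and in particular shows that $\varphi$ is a composition of involutions of $\mathbb{P}^3_\mathbb{C}$.
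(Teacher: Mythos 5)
Your overall strategy is exactly the one the paper follows: reduce via the left--right conjugacy to the finitely many Pan--Ronga--Vust representatives of bidegree $(2,3)$ (\cite{PanRongaVust}), write each representative explicitly as a word in involutions and linear maps with the non-involutive linear factors pushed to the ends, and then pay $2(4+1)=10$ involutions per end factor by Lemma \ref{lem:transvection}, for a total of $20+m\leq 30$ once $m\leq 10$. Your accounting is also consistent with what the paper actually achieves: in its decompositions every interior linear factor is itself an involution (e.g.\ $(z_0:z_3-z_1:z_2:z_3)$), only the leftmost linear factor is a genuine element of $\mathrm{PGL}(4,\mathbb{C})$ to be merged with $A$, and the longest word (for $f_1$) uses exactly $10$ involutions, so the bound $30$ is tight for this method. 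You are likewise right that Theorem \ref{thm:jonquieres} alone would only give $10+26+10=46$ on an orbit, which is why the explicit words are unavoidable.

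The gap is that you never produce those words. The entire mathematical content of the paper's proof is the list of eleven explicit identities, one for each representative $f_1,\ldots,f_{11}$, each verified by direct composition; everything else is the two-line orbit argument you have already given. Your text acknowledges this step as ``the main obstacle'' and then asserts the conclusion conditionally on it being carried out. As written, the proposal establishes neither that each biclass representative is a composition of involutions at all, nor that $m\leq 10$ holds for every one of them (a priori some representative could require $11$ or more involutions, or an interior non-involutive linear factor, either of which would break the bound $30$). The heuristic appeal to the geometry of lines through $\smallO{}$ does not substitute for these verifications. So the proposal is a correct proof plan, coinciding with the paper's, but it is not yet a proof: to complete it you must exhibit, for each of the eleven normal forms, a factorization into one element of $\mathrm{PGL}(4,\mathbb{C})$ followed by at most ten involutions, and check each identity by composing the maps.
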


\begin{proof}
If $\varphi$ is a birational self map of $\mathbb{P}^3_\mathbb{C}$ of 
bidegree $(2,3)$, then $\varphi\in\mathcal{O}(f_i)$ where $f_i$ is one of
the following map (\cite{PanRongaVust})
\begin{small}
\begin{align*}
& f_1=\big(-z_0z_1+z_0z_2:z_0z_3:-z_0z_1+z_1z_2:z_1z_3\big)
&& f_2=\big(z_0z_1:z_0z_2-z_1z_2:z_0z_3:z_1z_3\big)\\
& f_3=\big(z_0z_1-z_0z_2:z_0z_3:z_1z_2:z_1z_3\big)
&& f_4=\big(z_0z_1:z_0z_2:z_0z_3-z_1z_3:z_1^2\big)\\
& f_5=\big(z_0z_2:z_0z_3:z_1^2:z_1z_3\big)
&& f_6=\big(z_0z_1:z_0z_2-z_1^2:z_0z_3:z_1z_3\big)\\
& f_7=\big(z_0z_1:z_0z_2-z_1z_3:z_0z_3:z_1^2\big)
&& f_8=\big(z_0z_2-z_1^2:z_0z_3:z_1z_2:z_1z_3\big)\\
& f_9=\big(z_0^2:z_0z_1:z_1z_2:z_0z_3-z_1^2\big)
&& f_{10}=\big(z_0^2:z_0z_2:z_1z_2:z_0z_3-z_1^2\big)\\
& f_{11}=\big(z_0z_2+z_1^2:z_0^2:z_0z_1:z_0z_3-z_1z_2\big)
&&
\end{align*}
\end{small}

Let us give for any of these maps a decomposition with involutions 
and elements of $\mathrm{PGL}(4,\mathbb{C})$:
\begin{small}
\begin{eqnarray*}
f_1&=&\big(-z_2+z_0:-z_1+z_3:z_0:z_3\big)\circ\big(z_0z_1:z_3^2:z_2z_3:z_1z_3\big)\circ\big(z_0z_1:z_3^2:z_1z_2:z_1z_3\big)\circ\\
& & \big(z_0:z_3-z_1:z_2:z_3\big)\circ\big(z_0z_2:z_1z_2:z_3^2:z_2z_3\big)\circ\big(z_0z_2:z_1z_3:z_3^2:z_2z_3\big)\circ\\
& & \big(-z_0+z_2:z_1:z_2:z_3\big)\circ\big(z_0z_2:z_1z_3:z_3^2:z_2z_3\big)\circ\big(z_0z_2:z_1z_2:z_3^2:z_2z_3\big)\circ\\
& & \big(z_3^2:z_1z_3:z_0z_2:z_0z_3\big)\circ\big(z_1z_3^2:z_0z_3^2:z_0z_1z_2:z_0z_1z_3\big)\\
f_2&=&\big(z_1:z_2:z_0:z_3\big)\circ\big(z_0z_3:z_0z_1:z_2z_3:z_3^2\big)\circ\big(z_0+z_3:z_1:z_2:z_3\big)\circ\\
& & \big(z_3^2:z_0z_1:z_2z_3:z_0z_3\big)\circ\big(z_3^2:z_0z_1:z_0z_2:z_0z_3\big)\circ\big(z_0z_1:z_3^2:z_1z_2:z_1z_3\big)\circ\\
& & \big(z_0z_3:z_3^2:z_1z_2:z_1z_3\big)\circ\big(z_0-z_1:z_1:z_2:z_3\big)\\
f_3&=&\big(-z_0:z_1:z_2:z_3\big)\circ\big(-z_0z_3+z_1z_2:z_1z_3:z_2z_3:z_3^2\big)\circ\\
& & \big(z_3^2:z_1z_3:z_0z_2:z_0z_3\big)\circ\big(z_1z_3^2:z_0z_3^2:z_0z_1z_2:z_0z_1z_3\big)\\
f_4&=&\big(z_0+z_1:z_2:z_3:z_1\big)\circ\big(z_0z_1:z_1^2:z_1z_2:z_0z_3\big)\circ\\
& & \big(z_0-z_1:z_1:z_2:z_3\big)\circ\big(z_0z_1:z_1^2:z_0z_2:z_1z_3\big)\\
f_5&=&\big(z_2:z_0:z_1:z_3\big)\circ\big(z_1^2z_3:z_0z_1z_3:z_2z_3^2:z_0z_1^2\big)\circ\\
& &\big(z_1^2z_3:z_0z_1z_3:z_0z_2z_3:z_0z_1^2\big)\circ\big(z_0z_1:z_1z_3:z_2z_3:z_1^2\big)\circ\big(z_0z_3:z_1z_3:z_2z_3:z_1^2\big) \\
f_6&=&\big(z_1:-z_2:z_3:z_0\big)\circ\big(z_0z_3:z_1z_3:z_0z_1-z_2z_3:z_3^2\big)\circ\big(z_1z_3:z_0z_1:z_0z_2:z_0z_3\big)\\
f_7&=&\big(z_0:z_2:z_3:z_1\big)\circ\big(z_1^2:z_0z_1:z_0z_2:z_1z_3\big)\circ\big(z_1^2:z_0z_1:z_0z_2:z_0z_3\big)\circ\\
& &\big(z_0:z_1:z_2-z_3:z_3\big)\circ\big(z_1^2:z_0z_1:z_1z_2:z_0z_3\big)\circ\big(z_1^2:z_0z_1:z_0z_2:z_0z_3\big) \\
f_8&=&\big(z_1:z_0:z_2:z_3\big)\circ\big(z_0z_3:z_0z_2-z_1z_3:z_2z_3:z_3^2\big)\circ\\
& &\big(z_0z_1:z_3^2:z_1z_2:z_1z_3\big)\circ\big(z_0z_3:z_3^2:z_1z_2:z_1z_3\big)\\
f_9&=&\big(z_0:z_1:z_2:-z_3\big)\circ\big(z_0^2:z_0z_1:z_0z_2:-z_3z_0+z_1^2\big)\circ\\
& &\big(z_0z_1:z_0^2:z_0z_2:z_1z_3\big)\circ\big(z_0z_1:z_0^2:z_1z_2:z_1z_3\big)\\
f_{10}&=&\big(z_0:z_2:z_1:-z_3\big)\circ\big(z_0z_2^2:z_1z_2^2:z_2^3:-z_3z_2^2+z_0z_1^2\big)\circ\\
& & \big(z_0z_2:z_0z_1:z_0^2:z_2z_3\big)\circ\big(z_0z_2:z_1z_2:z_0^2:z_2z_3\big)\\
f_{11}&=&\big(-z_2:z_0:z_1:-z_3\big)\circ\big(z_0^2:z_0z_1:-z_0z_2-z_1^2:z_0z_3\big)\circ\big(z_0^2:z_0z_1:z_0z_2:-z_0z_3+z_1z_2\big)
\end{eqnarray*}
\end{small}
\end{proof}

\begin{proposition}
Let $\varphi$ be a birational self map of $\mathbb{P}^3_\mathbb{C}$ of 
bidegree $(2,4)$. Then $\varphi$ can be written as a composition of 
involutions of $\mathbb{P}^3_\mathbb{C}$. Furthermore 
$\mathfrak{n}\big(\varphi,\mathrm{Bir}(\mathbb{P}^3_\mathbb{C})\big)\leq 37$.
\end{proposition}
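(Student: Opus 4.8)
The plan is to follow the same strategy as for bidegrees $(2,2)$ and $(2,3)$: reduce to the finite classification of Pan, Ronga and Vust and then treat each normal form explicitly. First I would invoke \cite{PanRongaVust}: up to the left-right conjugacy $(A,\varphi,B)\mapsto A\varphi B^{-1}$ there are only finitely many birational self maps of $\mathbb{P}^3_\mathbb{C}$ of bidegree $(2,4)$, say $f_1,\ldots,f_m$, so that every such $\varphi$ lies in some orbit $\mathcal{O}(f_i)$. It therefore suffices to bound $\mathfrak{n}$ for one representative and to check the bound is constant along the orbit. The latter is the key book-keeping point: if $f_i=\iota_1\circ\ldots\circ\iota_k\circ L$ with each $\iota_j$ an involution of $\mathbb{P}^3_\mathbb{C}$ and $L\in\mathrm{PGL}(4,\mathbb{C})$, then for $\psi=Af_iB^{-1}$ one has
\[
\psi=\big(A\iota_1A^{-1}\big)\circ\ldots\circ\big(A\iota_kA^{-1}\big)\circ\big(ALB^{-1}\big),
\]
where each $A\iota_jA^{-1}$ is again an involution and $ALB^{-1}\in\mathrm{PGL}(4,\mathbb{C})$; hence $\mathfrak{n}\big(\psi,\mathrm{Bir}(\mathbb{P}^3_\mathbb{C})\big)\leq k+\mathfrak{n}\big(ALB^{-1},\mathrm{PGL}(4,\mathbb{C})\big)\leq k+10$ by Lemma \ref{lem:transvection}, uniformly in $\psi$.

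Next I would dispose of the representatives that are de Jonquières maps. A bidegree $(2,4)$ map has degree $d=2$, so whenever $f_i\in\mathrm{J}_{\smallO{}}(1,\mathbb{P}^3_\mathbb{C})$ Theorem \ref{thm:jonquieres} already gives $\mathfrak{n}\big(f_i,\mathrm{Bir}(\mathbb{P}^3_\mathbb{C})\big)\leq 10\cdot 2+6=26\leq 37$, and the orbit estimate above extends this to all of $\mathcal{O}(f_i)$. For the remaining normal forms I would, exactly as in the bidegree $(2,3)$ case, write down by hand an explicit factorization of each $f_i$ as an alternating word in quadratic (or cubic) maps and elements of $\mathrm{PGL}(4,\mathbb{C})$, in which every non-linear factor is checked to be an involution by squaring it and simplifying (as for the factor $\big(z_0z_3:z_1z_3:z_1^2-z_2z_3:z_3^2\big)$ used before). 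Collecting the linear factors to the right as in the displayed identity — which conjugates the involutions, leaving them involutions, and multiplies the linear factors into a single one — turns each word into $k$ involutions followed by one element of $\mathrm{PGL}(4,\mathbb{C})$, so that $\mathfrak{n}\big(\varphi,\mathrm{Bir}(\mathbb{P}^3_\mathbb{C})\big)\leq k+10$. As in the bidegree $(2,3)$ case the number $k$ of non-linear factors stays small for each normal form, and the resulting uniform bound is $37=9+7\cdot 4$, consistent with the general bidegree $(2,\ell)$ estimate $9+7\ell$.

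The factorizations themselves are to be found by degree reduction: starting from $f_i$ (of degree $2$, with inverse of degree $4$) I would compose on the appropriate side with well-chosen quadratic involutions, each step lowering $\deg\varphi^{-1}$, until reaching an element of $\mathrm{PGL}(4,\mathbb{C})$, and then read the word backwards. The main obstacle is precisely the production of these explicit words: since $\deg\varphi\neq\deg\varphi^{-1}$ in general and there is no Noether-type factorization available in $\mathrm{Bir}(\mathbb{P}^3_\mathbb{C})$, each normal form must be handled individually, the quadratic factors must be guessed and then verified to square to the identity, and one must check that the total length respects the budget giving $37$. Everything else — the reduction to finitely many biclasses, the invariance of $\mathfrak{n}$ along an orbit, and the $\mathrm{PGL}(4,\mathbb{C})$ count — is routine given the tools already established.
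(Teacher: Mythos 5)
Your strategy is exactly the paper's: invoke the Pan--Ronga--Vust classification of bidegree $(2,\cdot)$ maps into finitely many left-right biclasses, observe that a decomposition of a representative $f_i$ into involutions and linear maps transports to the whole orbit $\mathcal{O}(f_i)$ at the cost of one extra element of $\mathrm{PGL}(4,\mathbb{C})$ (itself $\leq 10$ involutions by Lemma \ref{lem:transvection}), and then decompose each representative explicitly. Your orbit bookkeeping is correct and is the same as what the paper uses implicitly.

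However, there is a genuine gap: the entire substance of the paper's proof is the list of eleven explicit factorizations of the normal forms $f_1,\ldots,f_{11}$ of bidegree $(2,4)$ into involutions (quadratic and cubic) and elements of $\mathrm{PGL}(4,\mathbb{C})$, and you do not produce any of them. You acknowledge this yourself (``the main obstacle is precisely the production of these explicit words''), but the obstacle is not overcome. The ``degree reduction'' procedure you propose --- composing with well-chosen quadratic involutions so that $\deg\varphi^{-1}$ drops at each step until one reaches $\mathrm{PGL}(4,\mathbb{C})$ --- is a heuristic with no termination guarantee: as you note, there is no Noether--Castelnuovo-type theorem in $\mathrm{Bir}(\mathbb{P}^3_\mathbb{C})$, so the existence of such a word of controlled length for each $f_i$ is precisely what must be exhibited, not assumed. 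Consequently the bound $37$ is asserted rather than derived; nothing in your argument pins down the length of the longest word actually needed. Your fallback via Theorem \ref{thm:jonquieres} (degree $d=2$ gives $\leq 26$, hence $\leq 36$ on the orbit) is sound \emph{for those representatives that lie in} $\mathrm{J}_{\smallO{}}(1,\mathbb{P}^3_\mathbb{C})$, but you would still have to verify which of the eleven normal forms are de Jonqui\`eres and handle the rest by hand; the paper does not take this route and instead writes out all eleven decompositions. Until those words (or the de Jonqui\`eres membership checks plus the remaining words) are supplied and their lengths tallied, the proposition is not proved.
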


\begin{proof}
If $\varphi$ is a birational self map of $\mathbb{P}^3_\mathbb{C}$ of 
bidegree $(2,4)$, then $\varphi\in\mathcal{O}(f_i)$ where $f_i$ is one of 
the following map (\cite{PanRongaVust})
\begin{small}
\begin{align*}
& f_1=\big(z_1z_2:z_1z_3:z_2z_3:z_0z_3-z_1^2-z_2^2\big)
&& f_2=\big(z_1^2-z_1z_3:z_1z_2:z_2^2:z_0z_3\big)\\
& f_3=\big(z_1z_2:z_2z_3:z_1^2:z_0z_3-z_2^2\big)
&& f_4=\big(z_1^2:z_1z_3:z_2(z_1-z_3):z_0z_3-z_2^2\big)\\
& f_5=\big(z_1z_3:z_2z_3:z_1^2:z_0z_3-z_2^2\big)
&& f_6=\big(z_1z_2:z_2z_3:z_3^2:z_0z_3-z_1^2-z_2^2\big)\\
& f_7=\big(z_2^2:z_1z_2:z_1^2-z_2z_3:z_0z_3\big)
&& f_8=\big(z_1^2:z_1z_3:z_3^2-z_1z_2:z_0z_3-z_2^2\big)\\
& f_9=\big(z_3^2:z_1z_3:z_1^2-z_2z_3:z_0z_3-z_2^2\big)
&& f_{10}=\big(z_1^2-z_1z_3:z_2^2-z_2z_3:z_1z_2:z_0z_3\big)\\
& f_{11}=\big(z_1^2-z_1z_3:z_1z_2:z_2z_3:z_0z_3-z_2^2\big)
&&
\end{align*}
\end{small}

Let us give for any of these maps a decomposition with involutions 
and elements of $\mathrm{PGL}(4,\mathbb{C})$:
\begin{small}
\begin{eqnarray*}
f_1&=&\big(z_3:z_2:z_1:-z_0\big)\circ\big(-z_0z_1z_2+z_1^2z_3+z_2^2z_3:z_2z_3^2:z_1z_3^2:z_1z_2z_3\big)\circ\\
& & \big(z_0z_1z_2:z_2z_3^2:z_1z_3^2:z_1z_2z_3\big)\circ\big(z_0z_3^2:z_2z_3^2:z_1z_3^2:z_1z_2z_3\big)\\
& & \\
f_2&=&\big(z_3:z_1:z_2:z_0\big)\circ\big(z_0z_2:z_1z_2:z_1^2:z_2z_3\big)\circ\big(z_0:z_1:z_2:z_2-z_3\big)\circ\\
& &\big(z_0z_2:z_1z_2:z_1^2:z_2z_3\big)\circ\big(z_0z_1:z_2^2:z_1z_2:z_1z_3\big)\circ\big(z_0z_1:z_1^2:z_1z_2:z_2z_3\big)\circ\\
& & \big(z_0z_1:z_2^2:z_1z_2:z_1z_3\big)\circ\big(z_0z_2:z_1z_2:z_1^2:z_2z_3\big)\circ\big(z_0z_2z_3:z_1z_2z_3:z_1^2z_3:z_1^2z_2\big)\circ\\
& &\big(z_0z_3:z_3^2:z_1z_2:z_1z_3\big)\circ\big(z_0z_1:z_3^2:z_1z_2:z_1z_3\big)\circ\big(z_0z_2:z_1z_2:z_3^2:z_2z_3\big)\circ\\
& &\big(z_0z_3:z_1z_2:z_3^2:z_2z_3\big)\circ\big(z_0z_3:z_3^2:z_1z_2:z_1z_3\big)\circ\big(z_0z_1:z_3^2:z_1z_2:z_1z_3\big)\circ\\
& &\big(z_0z_2z_3:z_1z_2z_3:z_1^2z_3:z_1^2z_2\big)\circ\big(z_0z_2:z_1z_2:z_1^2:z_2z_3\big)\\
& & \\
f_3&=&\big(z_3:z_2:z_1:-z_0\big)\circ\big(z_0z_2:z_1^2:z_2z_3:z_1z_2\big)\circ\big(-z_0+z_2:z_1:z_2:z_3\big)\circ\\
& &\big(z_0z_2:z_2z_3:z_1z_3:z_1z_2\big)\circ\big(z_0z_3:z_2z_3:z_1z_3:z_1z_2\big)\\
& & \\
f_4&=&\big(z_1:z_1-z_3:z_2:z_0\big)\circ\big(z_0z_3:z_1z_3:z_1z_2:z_1^2\big)\circ\big(z_0z_3:z_1z_3:z_2z_3:z_1^2\big)\circ\\
& & \big(-z_0:z_1:z_2:z_1-z_3\big)\circ\big(-z_0z_1+z_2^2:z_1^2:z_1z_2:z_1z_3\big)\circ\\
& & \big(z_0z_1:z_1z_3:z_2z_3:z_1^2\big)\circ\big(z_0z_3:z_1z_3:z_2z_3:z_1^2\big)\\
& & \\
f_5&=&\big(z_3:z_2:z_1:z_0\big)\circ\big(z_0z_3:z_1z_3:z_1z_2:z_1^2\big)\circ\big(z_0z_3:z_1z_3:z_2z_3:z_1^2\big)\circ\big(-z_0:z_1:z_2:z_3\big)\circ\\
& & \big(-z_0z_1+z_2^2:z_1^2:z_1z_2:z_1z_3\big)\circ\big(z_0z_1:z_1z_3:z_2z_3:z_1^2\big)\circ\big(z_0z_3:z_1z_3:z_2z_3:z_1^2\big)\\
& & \\
f_6&=&\big(-z_1:z_2:z_3:-z_0-2z_1\big)\circ\big(z_0z_3:-z_1z_3+z_2^2:z_2z_3:z_3^2\big)\circ\big(z_0z_2:z_1z_3:z_3^2:z_2z_3\big)\circ\\
& & \big(z_0z_2:z_1z_2:z_3^2:z_2z_3\big)\circ\big(-z_0z_3+z_1^2:z_1z_3:z_2z_3:z_3^2\big)\circ\big(z_0:z_1+z_2:z_2:z_3\big)\\
& & \\
f_7&=&\big(z_2:z_1:z_3:z_0\big)\circ\big(z_0z_2:z_1z_2:z_2^2:z_1^2-z_2z_3\big)\circ\\
& & \big(z_0z_2z_3:z_1z_2z_3:z_1^2z_3:z_1^2z_2\big)\circ\big(z_0z_3^2:z_1z_2z_3:z_1^2z_3:z_1^2z_2\big)\\
& & \\
f_8&=&\big(z_1:z_3:z_2:z_0\big)\circ\big(z_0z_1:z_1^2:-z_1z_2+z_3^2:z_1z_3\big)\circ\big(z_0z_1^2:z_3^3:z_1z_2z_3:z_1z_3^2\big)\circ\\
& &\big(z_0z_1:z_3^2:z_1z_2:z_1z_3\big)\circ\big(-z_0:z_1:z_2:z_3\big)\circ\big(-z_0z_3+z_2^2:z_1z_3:z_2z_3:z_3^2\big)\\
& & \\
f_9&=&\big(z_3:z_1:z_2:-z_0\big)\circ\big(z_0z_3:z_1z_3:-z_2z_3+z_1^2:z_3^2\big)\circ\big(-z_0z_3+z_2^2:z_1z_3:z_2z_3:z_3^2\big)
\end{eqnarray*}
\end{small}

\begin{small}
\begin{eqnarray*}
f_{10}&=&\big(z_3-z_2:z_1-z_2:z_2:z_0\big)\circ\big(z_0z_3:z_1z_2:z_2z_3:z_2^2\big)\circ\big(z_0z_3:z_1z_3:z_2z_3:z_2^2\big)\circ\\
& & \big(z_0:z_1:z_2:z_2+z_3\big)\circ\big(z_0z_1:z_2^2:z_1z_2:z_1z_3\big)\circ\big(z_0:z_1:z_2:z_1-z_3\big)\circ\\
& & \big(z_0z_1z_3:z_2^2z_3:z_1z_2z_3:z_1z_2^2\big)\circ\big(z_0z_3^2:z_2^2z_3:z_1z_2z_3:z_1z_2^2\big)\\
& & \\
f_{11}&=&\big(z_2:z_1:z_1-z_3:-z_0\big)\circ\big(z_0z_2:z_1z_2:z_1^2:z_2z_3\big)\circ\big(z_0z_3:z_1z_3:z_1z_2:z_3^2\big)\circ\\
& & \big(-z_0+z_2:z_1:z_2:z_1-z_3\big)\circ\big(z_0z_3:z_1z_3:z_2z_3:z_1^2\big)\circ\\
& & \big(z_0z_2^2:z_1z_2z_3:z_1^2z_3:z_2z_3^2\big)\circ\big(z_0z_2z_3:z_1z_2z_3:z_1^2z_3:z_1^2z_2\big)
\end{eqnarray*}
\end{small}
\end{proof}

\section{Dimension $\geq 3$}

\subsection{The group generated by the automorphisms of $\mathbb{P}^n_\mathbb{C}$ and the Cremona involution}

Pan has proved that, as soon as $n\geq 3$, the subgroup generated by $\mathrm{Aut}(\mathbb{P}^n_\mathbb{C})$ and the involution $\sigma_n$
is a strict subgroup $\mathrm{G}_n(\mathbb{C})$ of $\mathrm{Bir}(\mathbb{P}^n_\mathbb{C})$. This subgroup has been studied in \cite{BlancHeden,Deserti:reg}, and in particular:

\begin{proposition}[\cite{Deserti:reg}]\label{pro:Gn}
For any $\varphi$ in $\mathrm{G}_n(\mathbb{C})$ there exist $A_0$, $A_1$, $\ldots$, $A_k$ in $\mathrm{Aut}(\mathbb{P}^n_\mathbb{C})$ such that
\[
\varphi=\Big(A_0\circ\sigma_n\circ A_0^{-1}\Big)\circ\Big(A_1\circ\sigma_n\circ A_1^{-1}\Big)\circ\ldots\circ\Big(A_k\circ\sigma_n\circ A_k^{-1}\Big)
\]
\end{proposition}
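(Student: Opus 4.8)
The plan is to prove the cleaner, equivalent statement that the subset $N$ of $\mathrm{Bir}(\mathbb{P}^n_\mathbb{C})$ consisting of all finite compositions of conjugates $A\circ\sigma_n\circ A^{-1}$, with $A\in\mathrm{Aut}(\mathbb{P}^n_\mathbb{C})=\mathrm{PGL}(n+1,\mathbb{C})$, coincides with $\mathrm{G}_n(\mathbb{C})$. First I would observe that $N$ really is a subgroup: each conjugate $A\circ\sigma_n\circ A^{-1}$ is again an involution, so the generating set is stable under inversion, and $N$ is exactly the set of finite products of such conjugates. Thus the assertion of the proposition is precisely the inclusion $\mathrm{G}_n(\mathbb{C})\subseteq N$. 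Since $\mathrm{G}_n(\mathbb{C})$ is generated by $\mathrm{PGL}(n+1,\mathbb{C})$ together with $\sigma_n$, and since $\sigma_n=\mathrm{id}\circ\sigma_n\circ\mathrm{id}\in N$, the whole statement reduces to the single inclusion $\mathrm{PGL}(n+1,\mathbb{C})\subseteq N$.

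To establish this inclusion I would argue by normality and simplicity. For any $B\in\mathrm{PGL}(n+1,\mathbb{C})$ one has $B\circ(A\circ\sigma_n\circ A^{-1})\circ B^{-1}=(BA)\circ\sigma_n\circ(BA)^{-1}$, so conjugation by $\mathrm{PGL}(n+1,\mathbb{C})$ permutes the generators of $N$ and hence preserves $N$. It follows that $N\cap\mathrm{PGL}(n+1,\mathbb{C})$ is a normal subgroup of $\mathrm{PGL}(n+1,\mathbb{C})$. The next step is to show that this intersection is nontrivial. Writing $\sigma_n$ in the equivalent form $(z_0:\ldots:z_n)\dashrightarrow(z_0^{-1}:\ldots:z_n^{-1})$ makes it transparent that $\sigma_n$ normalizes the diagonal torus and acts on it by inversion, that is $\sigma_n\circ D\circ\sigma_n=D^{-1}$ for every diagonal $D\in\mathrm{PGL}(n+1,\mathbb{C})$. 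Consequently the composition of the two conjugates $\sigma_n$ and $D\circ\sigma_n\circ D^{-1}$ equals $\sigma_n\circ D\circ\sigma_n\circ D^{-1}=D^{-2}$, a diagonal element of $\mathrm{PGL}(n+1,\mathbb{C})$ lying in $N$; taking $D=\mathrm{diag}(\lambda,1,\ldots,1)$ with $\lambda^2\neq 1$ produces a nontrivial such element.

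To conclude I would invoke the simplicity of $\mathrm{PGL}(n+1,\mathbb{C})$: because $\mathbb{C}$ is algebraically closed every projective class has a representative of determinant $1$, so $\mathrm{PGL}(n+1,\mathbb{C})=\mathrm{PSL}(n+1,\mathbb{C})$ is simple (here $n+1\geq 2$, so no exceptional small cases occur). A nontrivial normal subgroup of a simple group is the whole group, whence $N\cap\mathrm{PGL}(n+1,\mathbb{C})=\mathrm{PGL}(n+1,\mathbb{C})$, i.e. $\mathrm{PGL}(n+1,\mathbb{C})\subseteq N$. Together with $\sigma_n\in N$ this gives $\mathrm{G}_n(\mathbb{C})=N$, so every $\varphi\in\mathrm{G}_n(\mathbb{C})$ is a composition of conjugates $A_i\circ\sigma_n\circ A_i^{-1}$, which is the desired conclusion.

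I expect the only genuinely substantive ingredient to be the simplicity of $\mathrm{PGL}(n+1,\mathbb{C})$; the remaining steps are the routine verification that $N$ is a $\mathrm{PGL}(n+1,\mathbb{C})$-stable subgroup and the explicit torus computation $\sigma_n\circ D\circ\sigma_n=D^{-1}$ yielding a nontrivial diagonal element of $N$. The one point to watch is the bookkeeping ensuring that $D^{-2}$ is non-scalar in $\mathrm{PGL}(n+1,\mathbb{C})$, but this is immediate for a suitable choice of $\lambda$.
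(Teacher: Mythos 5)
Your argument is correct: the subgroup $N$ generated by the conjugates $A\circ\sigma_n\circ A^{-1}$ is normalized by $\mathrm{PGL}(n+1,\mathbb{C})$, the torus computation $\sigma_n\circ D\circ\sigma_n=D^{-1}$ gives a non-scalar element $D^{-2}$ in $N\cap\mathrm{PGL}(n+1,\mathbb{C})$, and simplicity of $\mathrm{PGL}(n+1,\mathbb{C})=\mathrm{PSL}(n+1,\mathbb{C})$ forces $\mathrm{PGL}(n+1,\mathbb{C})\subseteq N$, hence $\mathrm{G}_n(\mathbb{C})=N$. The paper itself gives no proof here (the proposition is quoted from the reference), but your route is exactly the one the paper sketches for the analogous real statement in Theorem \ref{thm:realCremonagroup}, namely normality plus simplicity of the projective linear group, so this matches the intended argument.
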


\begin{corollary}
Any element of $\mathrm{N}\big(\mathrm{G}_n(\mathbb{C});\mathrm{Bir}(\mathbb{P}^n_\mathbb{C})\big)$ can be written as a composition of involutions of $\mathbb{P}^n_\mathbb{C}$.
\end{corollary}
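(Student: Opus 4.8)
The plan is to exhibit the set of all finite compositions of involutions as a \emph{normal} subgroup of $\mathrm{Bir}(\mathbb{P}^n_\mathbb{C})$ that contains $\mathrm{G}_n(\mathbb{C})$. Since the normal closure $\mathrm{N}\big(\mathrm{G}_n(\mathbb{C});\mathrm{Bir}(\mathbb{P}^n_\mathbb{C})\big)$ is by definition the smallest normal subgroup containing $\mathrm{G}_n(\mathbb{C})$, the desired inclusion will then be immediate. Write $\mathrm{I}_n$ for the subset of $\mathrm{Bir}(\mathbb{P}^n_\mathbb{C})$ consisting of those maps that can be expressed as a composition of finitely many involutions of $\mathbb{P}^n_\mathbb{C}$.

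First I would check that $\mathrm{I}_n$ is a subgroup. Closure under composition is clear, since concatenating two words in involutions again yields a word in involutions; and $\mathrm{id}\in\mathrm{I}_n$ because $\iota\circ\iota=\mathrm{id}$ for any involution $\iota$. For closure under inversion, observe that if $\varphi=\iota_1\circ\iota_2\circ\ldots\circ\iota_k$ then $\varphi^{-1}=\iota_k\circ\ldots\circ\iota_2\circ\iota_1$, because each $\iota_j$ is its own inverse; hence $\varphi^{-1}\in\mathrm{I}_n$.

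Next I would verify that $\mathrm{I}_n$ is normal in $\mathrm{Bir}(\mathbb{P}^n_\mathbb{C})$. The essential observation is that a conjugate of an involution is again an involution: if $\iota^2=\mathrm{id}$ and $\psi\in\mathrm{Bir}(\mathbb{P}^n_\mathbb{C})$, then $\big(\psi\circ\iota\circ\psi^{-1}\big)^2=\psi\circ\iota^2\circ\psi^{-1}=\mathrm{id}$. Thus for $\varphi=\iota_1\circ\ldots\circ\iota_k\in\mathrm{I}_n$ and any $\psi$, the conjugate $\psi\circ\varphi\circ\psi^{-1}=\big(\psi\circ\iota_1\circ\psi^{-1}\big)\circ\ldots\circ\big(\psi\circ\iota_k\circ\psi^{-1}\big)$ is again a composition of involutions, so $\psi\circ\varphi\circ\psi^{-1}\in\mathrm{I}_n$.

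Finally I would invoke Proposition \ref{pro:Gn}: each factor $A_i\circ\sigma_n\circ A_i^{-1}$ is a conjugate of the involution $\sigma_n$, hence itself an involution, so every element of $\mathrm{G}_n(\mathbb{C})$ lies in $\mathrm{I}_n$; that is, $\mathrm{G}_n(\mathbb{C})\subseteq\mathrm{I}_n$. Since $\mathrm{I}_n$ is a normal subgroup containing $\mathrm{G}_n(\mathbb{C})$, and $\mathrm{N}\big(\mathrm{G}_n(\mathbb{C});\mathrm{Bir}(\mathbb{P}^n_\mathbb{C})\big)$ is the smallest normal subgroup with this property, we obtain $\mathrm{N}\big(\mathrm{G}_n(\mathbb{C});\mathrm{Bir}(\mathbb{P}^n_\mathbb{C})\big)\subseteq\mathrm{I}_n$, which is exactly the assertion. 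I do not expect any genuine obstacle here; the argument is purely formal, and the only point demanding a line of care is the normality step, which rests entirely on the fact that conjugation preserves the property of being an involution.
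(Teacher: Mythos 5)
Your proof is correct and is exactly the argument the paper intends: the paper states this corollary without proof, relying on the observation (made explicitly in the remark in the introduction) that the set of compositions of involutions is a normal subgroup of $\mathrm{Bir}(\mathbb{P}^n_\mathbb{C})$, combined with Proposition \ref{pro:Gn}. Nothing is missing.
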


\subsection{The group of tame automorphisms}

As we already mentioned it, $\mathrm{Tame}_3$ does not coincide with $\mathrm{Aut}(\mathbb{C}^3)$ (\emph{see} \S\ref{subsec:polyaut}): the Nagata automorphism 
\[
N=\big(z_0+2z_1(z_0z_2-z_1^2)+z_2(z_0z_2-z_1^2)^2,z_1+z_2(z_0z_2-z_1^2),z_2\big)
\]
is not tame (\cite{ShestakovUmirbaev}). Note that since the Nagata automorphism is contained in $\mathrm{G}_3(\mathbb{C})$ (\emph{see} \cite{BlancHeden}), it can also be written as a composition of involutions (Proposition \ref{pro:Gn}). Since $\mathrm{G}_n(\mathbb{C})$ contains the group of tame polynomial automorphisms of $\mathbb{C}^n$ (\emph{see} \cite{Deserti:reg}) one gets that

\begin{proposition}
Any element of $\mathrm{N}(\mathrm{Tame}_n,\mathrm{Aut}(\mathbb{C}^n))$ is a composition of involutions of $\mathbb{P}^n_\mathbb{C}$. 
\end{proposition}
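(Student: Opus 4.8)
The plan is to reduce the statement to the fact, already recorded in the Corollary following Proposition \ref{pro:Gn}, that every element of the normal subgroup generated by $\mathrm{G}_n(\mathbb{C})$ in $\mathrm{Bir}(\mathbb{P}^n_\mathbb{C})$ is a composition of involutions of $\mathbb{P}^n_\mathbb{C}$. The only genuine input needed is the inclusion $\mathrm{Tame}_n\subset\mathrm{G}_n(\mathbb{C})$ recalled just above (from \cite{Deserti:reg}), together with the elementary observation that taking normal closures is monotone with respect to enlarging the ambient group.

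First I would unwind the definition of $\mathrm{N}\big(\mathrm{Tame}_n;\mathrm{Aut}(\mathbb{C}^n)\big)$: any of its elements is a finite product of factors of the form $\psi\circ\tau\circ\psi^{-1}$ with $\psi\in\mathrm{Aut}(\mathbb{C}^n)$ and $\tau\in\mathrm{Tame}_n$. Since $\mathrm{Tame}_n\subset\mathrm{G}_n(\mathbb{C})$, each such $\tau$ already lies in $\mathrm{G}_n(\mathbb{C})$; and since $\mathrm{Aut}(\mathbb{C}^n)\subset\mathrm{Bir}(\mathbb{P}^n_\mathbb{C})$, each conjugating map $\psi$ lies in $\mathrm{Bir}(\mathbb{P}^n_\mathbb{C})$. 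Hence every factor $\psi\circ\tau\circ\psi^{-1}$ is a conjugate, by an element of $\mathrm{Bir}(\mathbb{P}^n_\mathbb{C})$, of an element of $\mathrm{G}_n(\mathbb{C})$, and therefore belongs to $\mathrm{N}\big(\mathrm{G}_n(\mathbb{C});\mathrm{Bir}(\mathbb{P}^n_\mathbb{C})\big)$. As the latter is a subgroup, it contains all products of such factors, which yields the inclusion
\[
\mathrm{N}\big(\mathrm{Tame}_n;\mathrm{Aut}(\mathbb{C}^n)\big)\subseteq\mathrm{N}\big(\mathrm{G}_n(\mathbb{C});\mathrm{Bir}(\mathbb{P}^n_\mathbb{C})\big).
\]

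It then suffices to invoke the Corollary following Proposition \ref{pro:Gn}, which asserts that every element of $\mathrm{N}\big(\mathrm{G}_n(\mathbb{C});\mathrm{Bir}(\mathbb{P}^n_\mathbb{C})\big)$ can be written as a composition of involutions of $\mathbb{P}^n_\mathbb{C}$; combined with the displayed inclusion this gives the claim. I do not expect a serious obstacle: the analytic content has been fully absorbed into Proposition \ref{pro:Gn} through \cite{Deserti:reg}, and the present statement is a formal consequence. The one point I would make sure to check is that the normal closure here is taken inside $\mathrm{Aut}(\mathbb{C}^n)$ rather than inside $\mathrm{Bir}(\mathbb{P}^n_\mathbb{C})$; restricting the conjugating elements to $\mathrm{Aut}(\mathbb{C}^n)$ only shrinks the group, so the inclusion above — and hence the conclusion — remains valid.
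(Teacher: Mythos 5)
Your proof is correct and takes essentially the same route as the paper: the paper deduces the statement from the inclusion $\mathrm{Tame}_n\subset\mathrm{G}_n(\mathbb{C})$ (citing \cite{Deserti:reg}) together with Proposition \ref{pro:Gn}, leaving the normal-closure bookkeeping implicit. Your explicit verification of the inclusion $\mathrm{N}\big(\mathrm{Tame}_n;\mathrm{Aut}(\mathbb{C}^n)\big)\subseteq\mathrm{N}\big(\mathrm{G}_n(\mathbb{C});\mathrm{Bir}(\mathbb{P}^n_\mathbb{C})\big)$ is exactly the detail the paper omits.
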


Can we give an upper bound for $\mathfrak{n}(\varphi,\mathrm{Bir}(\mathbb{P}^n_\mathbb{C})\big)$ when $\varphi\in\mathrm{Tame}_n$ ?

Set 
\begin{eqnarray*}
\mathrm{H}_1&=&\Big\{\Big(\alpha z_0+p(z_1),\sum_{i=1}^{n-1}a_{1,i}z_i+\gamma_1,\sum_{i=1}^{n-1}a_{2,i}z_i+\gamma_2,\ldots,\sum_{i=1}^{n-1}a_{n-1,i}z_i+\gamma_{n-1}\Big)\Big\vert\\
& &\hspace{1cm} p\in\mathbb{C}[z_1],\,\alpha,\,a_{i,j},\,\gamma_i\in\mathbb{C},\,\alpha\det(a_{ij})\not=0\Big\},
\end{eqnarray*}
\begin{eqnarray*}
\mathrm{H}_2&=&\Big\{\Big(\alpha z_0+\beta z_1+\gamma,\delta z_0+\sum_{i=1}^{n-1}a_{1,i}z_i+\gamma_1,\sum_{i=1}^{n-1}a_{2,i}z_i+\gamma_2,\ldots,\sum_{i=1}^{n-1}a_{n-1,i}z_i+\gamma_{n-1}\Big)\Big\vert\\
& &\hspace{1cm} \alpha,\,\beta,\gamma,\,\delta,\,a_{i,j},\,\gamma_i\in\mathbb{C},\,\det M(\alpha,\beta,\gamma,\delta,a_{i,j})\not=0\Big\}
\end{eqnarray*}
where
\[
M(\alpha,\beta,\gamma,\delta,a_{i,j})=
\left(
\begin{array}{c|ccccc}
\alpha & \beta & \gamma & 0 & \ldots & 0\\
\hline\\
\delta & & & & & \\
0 & & & a_{i,j} & & \\
\vdots & & & & & \\
0 & & & & & 
\end{array}
\right)
\]
One can check that 
\begin{eqnarray*}
\mathrm{H}_1\cap\mathrm{H}_2&=&\Big\{\Big(\alpha z_0+\beta z_1+\gamma,\sum_{i=1}^{n-1}a_{1,i}z_i+\gamma_1,\sum_{i=1}^{n-1}a_{2,i}z_i+\gamma_2,\ldots,\sum_{i=1}^{n-1}a_{n-1,i}z_i+\gamma_{n-1}\Big)\Big\vert\\ 
& & \hspace{1cm} \alpha,\,\beta,\gamma,\,\delta,\,a_{i,j},\,\gamma_i\in\mathbb{C},\,\det M(\alpha,\beta,\gamma,a_{i,j})\not=0\Big\}
\end{eqnarray*}
where 
\[
M(\alpha,\beta,\gamma,a_{i,j})=
\left(
\begin{array}{c|ccccc}
\alpha & \beta & \gamma & 0 & \ldots & 0 \\
\hline \\
0 & & & a_{i,j} & & \\
\vdots & & & & & \\
0 & & & & &  
\end{array}
\right)
\]

\begin{proposition}
Let $\varphi=\varphi_k\circ\varphi_{k-1}\circ\ldots\circ\varphi_1$ 
be a reduced word in the amalgamated product 
$\mathrm{H}_1\ast_{\mathrm{H}_1\cap\mathrm{H}_2}\mathrm{H}_2$.

The degree of $\varphi$ is equal to the product of the degree of 
the factors $\varphi_i$.
\end{proposition}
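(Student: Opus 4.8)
The plan is to argue by induction on the length $k$ of the reduced word, tracking not merely the degree but the \emph{leading form} (top-degree homogeneous part) of each coordinate, and in particular keeping account of which coordinate realises the total degree. The degree of a composition $f\circ g$ never exceeds $\deg f\cdot\deg g$, and equals it precisely when no cancellation occurs among the leading forms; so the whole point is to propagate enough information about leading forms to rule out cancellation at each step.

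First I would record the two facts that reducedness supplies, using the computation of $\mathrm{H}_1\cap\mathrm{H}_2$ above. A factor $\varphi_i\in\mathrm{H}_1$ lies outside the amalgamated subgroup exactly when its polynomial $p$ has degree $\geq 2$, while a factor $\varphi_i\in\mathrm{H}_2$ lies outside exactly when $\delta\neq 0$. Consequently every $\mathrm{H}_2$-factor is affine of degree $1$, every $\mathrm{H}_1$-factor has degree $d_i=\deg p\geq 2$, and the only degree-raising operation available is the substitution $z_0\mapsto\alpha z_0+p(z_1)$ produced by an $\mathrm{H}_1$-factor, which acts on the single variable $z_1$. This is the structural analogue of the degree formula for $\mathrm{Aut}(\mathbb{C}^2)=\mathrm{A}_2\ast_{\mathrm{S}_2}\mathrm{E}_2$, and I would follow the same leading-term philosophy.

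The core is a two-state invariant for a reduced word $\psi=\varphi_k\circ\ldots\circ\varphi_1$ of putative degree $D=\prod_i d_i$: if the leftmost factor $\varphi_k$ lies in $\mathrm{H}_1$, then $\deg\psi_0=D$ while $\deg\psi_i<D$ for every $i\geq 1$; if $\varphi_k$ lies in $\mathrm{H}_2$, then $\deg\psi_1=D$ with nonzero leading form and every coordinate has degree $\leq D$. I would verify the base case $k=1$ directly and then check the two transitions, which alternate exactly as reducedness prescribes. Composing an $\mathrm{H}_1$-state on the left with an $\mathrm{H}_2$-factor, the new coordinate $\delta\psi_0+\cdots$ inherits the strictly dominant term $\delta\,\widehat{\psi_0}$, so the degree $D$ is transported into the first coordinate; here $\delta\neq 0$ is essential. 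Composing an $\mathrm{H}_2$-state on the left with an $\mathrm{H}_1$-factor, the first coordinate becomes $\alpha\psi_0+p(\psi_1)$, whose top part is $(\operatorname{lead}p)\,\widehat{\psi_1}^{\,d}\neq 0$ of degree $dD$, strictly above the degree $\leq D$ of $\alpha\psi_0$ and of every affinely transformed coordinate $\psi_i$, $i\geq 1$; here $d\geq 2$ is essential. In each case the polynomial ring is a domain and the pertinent leading forms are nonzero, so no cancellation occurs and the new total degree is exactly $d_k D$.

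The main obstacle is precisely this bookkeeping of strict inequalities: one must guarantee that the top-degree monomial manufactured at each step strictly dominates every term carried over from the previous coordinates, so that it cannot be annihilated. This is exactly where the two reducedness hypotheses intervene — $\deg p\geq 2$ forces an $\mathrm{H}_1$-factor to raise the degree strictly above all inherited degrees, and $\delta\neq 0$ forces an $\mathrm{H}_2$-factor to carry the high degree from $z_0$ into $z_1$ rather than extinguish it. Once the invariant is pushed through all $k$ steps, reading off $\deg\psi$ in either terminal state gives $\deg\psi=\prod_i d_i$, as claimed.
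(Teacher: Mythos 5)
Your proof is correct and follows essentially the same route as the paper's: both adapt the Friedland--Milnor leading-term induction, propagating the alternating invariant that after an $\mathrm{H}_1$-factor the coordinate $\psi_0$ strictly dominates in degree, while after an $\mathrm{H}_2$-factor the maximal degree is carried (with nonzero leading form) by $\psi_1$, the two reducedness conditions $\deg p\geq 2$ and $\delta\neq 0$ preventing cancellation at each step. Your write-up is if anything slightly more careful about the base case and about why $p(\psi_1)$ attains degree $dD$ in the integral domain $\mathbb{C}[z_0,\ldots,z_{n-1}]$.
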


\begin{proof}
We follow the proof of \cite[Theorem 2.1]{FriedlandMilnor}.

Let $\psi=(\psi_0,\psi_1,\ldots,\psi_{n-1})$ be an element
of $\mathrm{H}_1\ast_{\mathrm{H}_1\cap\mathrm{H}_2}\mathrm{H}_2$
which satisfy the condition degrees 
$d_0=\deg\psi_1\geq\deg\psi_i$ for any $0\leq i\leq n-1$.

Now consider $\varphi=(\varphi_0,\varphi_1,\ldots,\varphi_{n-1})$
an element of $H_1\smallsetminus(H_1\cap H_2)$ of degree $d$; 
in particular $\varphi_0=\alpha z_0+p(z_1)$ with 
$d=\deg p\geq 2$. Denote by $\widetilde{\varphi_i}$ the 
components of $\varphi\circ\psi$. One has 
$dd_0=\deg\widetilde{\varphi_0}>\deg\widetilde{\varphi_i}$ 
for any $1\leq i\leq n-1$. 

Take $\phi$ in $H_2\smallsetminus(H_1\cap H_2)$, and set 
$\phi\circ\varphi\circ\psi=(\widetilde{\phi}_0,\widetilde{\phi_1},\ldots,\widetilde{\phi_{n-1}})$.
Then $dd_0=\deg\widetilde{\phi_1}\geq\deg\widetilde{\phi_i}$ for 
any $i\geq 0$.

As a result whenever we compose with an element of 
$H_1\smallsetminus(H_1\cap H_2)$ followed with an element of 
$H_2\smallsetminus(H_1\cap H_2)$ the degree will be multiply 
by $d$. The statement follows by induction.
\end{proof}

Let us now remark that $\langle\mathrm{H}_1,\,\mathrm{H}_2\rangle$ 
contains both $\mathrm{A}_n$ and $(z_0+z_1^2,z_1,z_2,\ldots,z_{n-1})$. 
Since 
$\mathrm{Tame}_n=\langle\mathrm{A}_n,\,(z_0+z_1^2,z_1,z_2,\ldots,z_{n-1})\rangle$ 
(\emph{see} \cite[Chapter 5.2]{vandenEssen}) any tame automorphism is a 
reduced word in $\mathrm{H}_1\ast_{\mathrm{H}_1\cap\mathrm{H}_2}\mathrm{H}_2$. 
Following what we did in \S\ref{sec:polynomialautomorphisms} one 
obtains:

\begin{theorem}
Let $\varphi$ be a tame automorphism of $\mathbb{C}^n$, $n\geq 3$, of degree $d$. 
\begin{itemize}
\item If $\varphi$ is affine, then 
$\mathfrak{n}\big(\varphi,\mathrm{Aut}(\mathbb{P}^n_\mathbb{C})\big)\leq 2n+4$;

\item if $\varphi$ is elementary, then 
$\mathfrak{n}\big(\varphi,\mathrm{Bir}(\mathbb{P}^n_\mathbb{C})\big)\leq 2n+10$;

\item otherwise 
$\mathfrak{n}\big(\varphi,\mathrm{Bir}(\mathbb{P}^n_\mathbb{C})\big)\leq \frac{d}{4}(2n+7)+10n+32$.
\end{itemize}
\end{theorem}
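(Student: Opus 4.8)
The plan is to transpose the argument of \S\ref{sec:polynomialautomorphisms} to the amalgam $\mathrm{H}_1\ast_{\mathrm{H}_1\cap\mathrm{H}_2}\mathrm{H}_2$, feeding the degree formula just proved into the combinatorics of reduced words and estimating each factor by Lemma \ref{lem:transvection}. The only genuinely new input beyond that Proposition is a bound of $2n+4$ involutions for a factor of $\mathrm{H}_2$ and of $2n+10$ involutions for a factor of $\mathrm{H}_1$. The affine case is immediate: if $\varphi$ is affine then $\varphi\in\mathrm{A}_n\subset\mathrm{PGL}(n+1,\mathbb{C})=\mathrm{Aut}(\mathbb{P}^n_\mathbb{C})$, and Lemma \ref{lem:transvection}(2) applied to $\mathrm{PGL}(n+1,\mathbb{C})$ gives at once $\mathfrak{n}\big(\varphi,\mathrm{Aut}(\mathbb{P}^n_\mathbb{C})\big)\leq 2\big((n+1)+1\big)=2n+4$.

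Next I would treat the elementary case, i.e.\ $\varphi\in\mathrm{H}_1$ (which for $n=2$ is exactly $\mathrm{E}_2$). I would write $\varphi=(\alpha z_0+p(z_1),L(z_1,\ldots,z_{n-1}))$ with $L$ affine and split it as
\[
\varphi=\big(\alpha z_0+p(z_1),z_1,\ldots,z_{n-1}\big)\circ\big(z_0,L(z_1,\ldots,z_{n-1})\big).
\]
The first factor lies in $\mathrm{PGL}(2,\mathbb{C}[z_1,\ldots,z_{n-1}])$, hence is a product of $\leq 8$ involutions of $\mathbb{P}^n_\mathbb{C}$ by Lemma \ref{lem:transvection}(3); the second factor is the image of an element of $\mathrm{A}_{n-1}\hookrightarrow\mathrm{PGL}(n,\mathbb{C})$ acting trivially on $z_0$, hence a product of $\leq 2(n+1)=2n+2$ involutions of $\mathbb{P}^n_\mathbb{C}$ by Lemma \ref{lem:transvection}(2). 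Adding, $\mathfrak{n}\big(\varphi,\mathrm{Bir}(\mathbb{P}^n_\mathbb{C})\big)\leq 8+(2n+2)=2n+10$.

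Finally the general case. Since $\mathrm{Tame}_n=\langle\mathrm{H}_1,\mathrm{H}_2\rangle$, any non-affine tame $\varphi$ of degree $d\geq 2$ is a reduced word in $\mathrm{H}_1\ast_{\mathrm{H}_1\cap\mathrm{H}_2}\mathrm{H}_2$ with $k$ factors $e_i\in\mathrm{H}_1\smallsetminus(\mathrm{H}_1\cap\mathrm{H}_2)$ and at most $k+1$ affine factors from $\mathrm{H}_2$. Each $e_i$ has degree $\geq 2$ (being outside the intersection its nonlinear part $p$ has degree $\geq 2$), so by the preceding Proposition $d=\prod_{i=1}^k\deg e_i$; exactly as in \S\ref{sec:polynomialautomorphisms}, from $\deg e_1\deg e_2\geq 4$ one gets $\prod_{i\geq 3}\deg e_i\leq d/4$, then $2(k-2)\leq d/4$, so $k\leq d/8+2$ (trivially true also for $k\leq 2$). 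Bounding each affine factor by $2n+4$ and each $e_i$ by $2n+10$, and using monotonicity in $k$,
\[
\mathfrak{n}\big(\varphi,\mathrm{Bir}(\mathbb{P}^n_\mathbb{C})\big)\leq (k+1)(2n+4)+k(2n+10)\leq\Big(\tfrac{d}{8}+3\Big)(2n+4)+\Big(\tfrac{d}{8}+2\Big)(2n+10)=\tfrac{d}{4}(2n+7)+10n+32.
\]

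The hard part is not the combinatorics, which is handed to us by the degree formula, but checking that the two factor estimates really produce involutions of the ambient $\mathbb{P}^n_\mathbb{C}$: one must realise the Möbius-in-$z_0$ part through the embedding $\mathrm{PGL}(2,\mathbb{C}[z_1,\ldots,z_{n-1}])\hookrightarrow\mathrm{Bir}(\mathbb{P}^n_\mathbb{C})$ of Lemma \ref{lem:transvection}(3) and the affine-in-$(z_1,\ldots,z_{n-1})$ part through $\mathrm{A}_{n-1}\hookrightarrow\mathrm{PGL}(n,\mathbb{C})\hookrightarrow\mathrm{PGL}(n+1,\mathbb{C})$, verifying that the involutions of the smaller group extend to involutions of $\mathbb{P}^n_\mathbb{C}$. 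A secondary point to watch is that the pure elementary bound $2n+10$ is strictly sharper than the general count for $k=1$, which is why the elementary maps must be isolated as their own case rather than absorbed into the last one.
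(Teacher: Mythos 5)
Your proposal is correct and follows exactly the route the paper intends: it merely points to \S4.3 and the amalgam $\mathrm{H}_1\ast_{\mathrm{H}_1\cap\mathrm{H}_2}\mathrm{H}_2$, and you have filled in precisely the missing details — the affine bound $2\bigl((n+1)+1\bigr)=2n+4$ from Lemma~\ref{lem:transvection}(2), the split of an $\mathrm{H}_1$-factor into a $\mathrm{PGL}(2,\mathbb{C}[z_1,\ldots,z_{n-1}])$ part ($\leq 8$) and an affine part in the remaining variables ($\leq 2n+2$), and the reduced-word count $(k+1)(2n+4)+k(2n+10)$ with $k\leq d/8+2$, whose arithmetic checks out. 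Your reading of ``elementary'' as membership in the amalgam factor $\mathrm{H}_1$ (rather than all of $\mathrm{E}_n$) is the one forced by the way the $2n+10$ bound feeds into the general count, so the reconstruction is faithful.
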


\begin{remark}
We cannot use this strategy to get a more precise statement for $G_n(\mathbb{C})$. 
Indeed using similar arguments as in the appendix of \cite{CantatLamy} one 
can prove that $G_n(\mathbb{C})$
has property $(FR)$; in particular, according to \cite{Serre} one has:

\begin{proposition}
The group $G_n(\mathbb{C})$ does not decompose as a non-trivial amalgam. 
\end{proposition}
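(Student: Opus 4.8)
The plan is to establish that $\mathrm{G}_n(\mathbb{C})$ has property (FR) — every isometric action on a complete real tree admits a global fixed point — and then to invoke Serre's tree-theoretic dictionary \cite{Serre}: a group with property (FR) has property (FA), and a group with property (FA) cannot be a nontrivial amalgam, since otherwise it would act on the associated Bass--Serre tree without fixing any vertex. Thus the entire content of the proposition is concentrated in the fixed-point statement, which I would prove by transposing the appendix of \cite{CantatLamy} to the present setting.

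First I would record the two building blocks of any action of $\mathrm{G}_n(\mathbb{C})$ on a real tree $T$. On the one hand $\sigma_n$ is an involution, so its orbits are bounded and it fixes a point of $T$; the same holds for every conjugate $A\sigma_n A^{-1}$, so by Proposition \ref{pro:Gn} the whole group is generated by elliptic elements. On the other hand, since $n\geq 3$ the group $\mathrm{Aut}(\mathbb{P}^n_\mathbb{C})=\mathrm{PGL}(n+1,\mathbb{C})$ has rank $\geq 3$ and satisfies Kazhdan's property (T); as in \cite{CantatLamy} this yields property (FR) for $\mathrm{PGL}(n+1,\mathbb{C})$, so that its fixed-point set $T^{\mathrm{PGL}}$ is a nonempty subtree of $T$.

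The heart of the argument is then to promote these two fixed-point phenomena to a single common fixed point. Here I would use the real-tree version of Serre's lemma (if $a$, $b$ and $ab$ are each elliptic, then $a$ and $b$ share a fixed point), exactly as in \cite{CantatLamy}: one shows that $\sigma_n$ together with a sufficiently rich supply of elements of $\mathrm{PGL}(n+1,\mathbb{C})$ generate subgroups that themselves have property (FR) — for instance the $\mathrm{PGL}(2,\mathbb{C})$-type subgroups, which by Lemma \ref{lem:homography} are generated by pairs of involutions — so that their fixed-point subtrees meet $T^{\mathrm{PGL}}$. Assembling these overlaps forces the nonempty subtree $T^{\mathrm{PGL}}$ to be stable under $\sigma_n$, and hence to contain a point fixed by all of $\mathrm{G}_n(\mathbb{C})=\langle\mathrm{PGL}(n+1,\mathbb{C}),\sigma_n\rangle$.

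I expect this last step to be the genuine obstacle. Since $\mathrm{G}_n(\mathbb{C})$ is not an algebraic group, property (T) of $\mathrm{PGL}(n+1,\mathbb{C})$ says nothing by itself about the ambient group, and one must control precisely how the fixed-point sets of the diagonal torus and of $\sigma_n$ interact — that is, reproduce the combinatorial bookkeeping of \cite{CantatLamy} that upgrades individual fixed points into a common one. Once property (FR), and therefore (FA), is in hand, the non-amalgam conclusion is immediate from \cite{Serre}.
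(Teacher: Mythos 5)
Your overall route is exactly the one the paper takes: establish property (FR) for $\mathrm{G}_n(\mathbb{C})$ by transposing the appendix of \cite{CantatLamy}, then conclude via Serre's dictionary \cite{Serre} that a group with (FR), hence (FA), cannot be a non-trivial amalgam. The paper itself offers no more than this sketch --- it states that ``using similar arguments as in the appendix of \cite{CantatLamy} one can prove that $\mathrm{G}_n(\mathbb{C})$ has property (FR)'' and leaves the assembly of fixed-point sets to the reader --- so in terms of level of detail you are on par with, and in places more explicit than, the source. Your use of Proposition \ref{pro:Gn} to see that $\mathrm{G}_n(\mathbb{C})$ is generated by elliptic elements, and of Serre's lemma on products of elliptic elements, is the right skeleton.

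One step of your justification, however, would fail as written: you derive property (FR) for $\mathrm{PGL}(n+1,\mathbb{C})$ from Kazhdan's property (T). Property (T) for $\mathrm{PGL}(n+1,\mathbb{C})$ is a statement about the \emph{topological} (Lie) group and its continuous unitary representations; an action on an abstract real tree sees only the underlying discrete group, and $\mathrm{PGL}(n+1,\mathbb{C})$ as a discrete group is not finitely generated, hence cannot have property (T) (and (T) for the Lie group gives no control on actions of the abstract group). This is precisely why the appendix of \cite{CantatLamy} does \emph{not} argue via (T): the fixed point for $\mathrm{PGL}$ there is obtained algebraically, from generation by root subgroups whose elements are elliptic (each unipotent lies in a bounded, e.g.\ divisible or boundedly generated, subgroup) together with Serre's lemma, and it is this algebraic argument you would need to import. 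The conclusion you want --- that $T^{\mathrm{PGL}}$ is a nonempty subtree --- is correct and is what \cite{CantatLamy} proves, but the reason you give for it is not a valid one, and since you yourself identify the interaction of $T^{\mathrm{PGL}}$ with the fixed-point set of $\sigma_n$ as the ``genuine obstacle'' you have not discharged, the proposal remains a programme rather than a proof; to be fair, the paper's own text is no more than that either.
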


More precisely if $G_n(\mathbb{C})$ is contained in an amalgam $G_1\ast_A~G_2$, then 
$G_n(\mathbb{C})$ is contained in a conjugate of either $G_1$ or $G_2$ 
(\emph{see} \cite{Serre}).
\end{remark}

\subsection{Monomial maps in any dimension}

Let $\mathbb{A}_\mathbb{C}^n$ be the affine space of dimension $n$. 
The multiplicative group $\mathbb{G}_m^n$ can be identified to the 
Zariski open subset $(\mathbb{A}^1_\mathbb{C}\smallsetminus\{0\})^n$ 
of $\mathbb{P}^n_\mathbb{C}$. Hence 
$\mathrm{Bir}(\mathbb{P}^n_\mathbb{C})$ contains the group of all 
algebraic automorphisms of the group $\mathbb{G}_m^n$, {\it i.e.} 
the group $\mathrm{Mon}(n,\mathbb{C})$ of \emph{monomial maps} 
$\mathrm{GL}(n,\mathbb{Z})$.

\begin{theorem}[\cite{Ishibashi}]
Let $n\geq 3$ be an integer. Any element $\varphi$ of 
$\mathrm{GL}(n,\mathbb{Z})$ can be written as a composition of
involutions of $\mathrm{GL}(n,\mathbb{Z})$, and
$\mathfrak{n}\big(\varphi,\mathrm{GL}(n,\mathbb{Z})\big)\leq 3n+9$.
\end{theorem}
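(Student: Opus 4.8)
The plan is to reduce an arbitrary $\varphi\in\mathrm{GL}(n,\mathbb{Z})$ to a signed permutation matrix by multiplying it on the left and right by involutions, and then to treat the resulting signed permutation separately. The feature of $\mathbb{Z}$ that makes a \emph{uniform} (entry-independent) bound at all possible is that, unlike over a field, an involution of $\mathrm{GL}(n,\mathbb{Z})$ may have arbitrarily large entries: for any $a\in\mathbb{Z}$ the map acting on two coordinates by $(z_i,z_j)\mapsto(z_i+az_j,-z_j)$ and fixing the others is an involution. Consequently, every elementary transvection $z_i\mapsto z_i+cz_j$ is, for \emph{every} $c\in\mathbb{Z}$ no matter how large, the product of exactly two such involutions. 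Thus the cost in involutions of a reduction is at most twice the number of elementary operations it uses, and crucially this count is independent of the size of the entries of $\varphi$.

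First I would reduce $\varphi$ to a diagonal matrix by elementary row and column operations. Here the hypothesis $n\geq 3$ is essential. Because the stable range condition of $\mathbb{Z}$ holds ($\mathrm{sr}(\mathbb{Z})=2$), a unimodular column of length $n\geq 3$ can be transformed to $(1,0,\ldots,0)^{\mathsf T}$ by a \emph{bounded} number of elementary operations, the extra coordinates allowing one to bypass the unbounded Euclidean algorithm that obstructs the case $n=2$. Clearing the first column and the first row in this way, then proceeding by induction on the complementary $(n-1)\times(n-1)$ block, reduces $\varphi$ to $\mathrm{diag}(1,\ldots,1,\varepsilon)$ with $\varepsilon=\det\varphi=\pm1$, using a constant number of elementary operations per column and hence $O(n)$ elementary operations overall. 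Each of these contributes two involutions.

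It then remains to treat the residual diagonal matrix, which is itself an involution, together with any signed permutation accumulated during the reduction. A permutation matrix is a product of at most $n-1$ transpositions, each an involution, and a sign-change matrix is a single involution, so the signed permutation part also costs $O(n)$ involutions. Summing the two linear contributions and optimising the bookkeeping---reusing factors, and absorbing the sign $-1$ attached to each transvection involution into the neighbouring permutation factors---yields the asserted bound $\mathfrak{n}\big(\varphi,\mathrm{GL}(n,\mathbb{Z})\big)\leq 3n+9$.

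The hard part will be the entry-independent column reduction of the first step: one must exhibit, for every unimodular column, a reduction to $(1,0,\ldots,0)^{\mathsf T}$ whose \emph{length} is bounded independently of the entries. This is precisely where $n\geq 3$ and the stable range of $\mathbb{Z}$ enter, and it is the crux of the bounded-generation phenomenon; it genuinely fails for $\mathrm{SL}(2,\mathbb{Z})$, which is virtually free and not boundedly generated by elementary matrices. A secondary difficulty is the sharp accounting needed to keep the total at $3n+9$ rather than a cruder linear function: one has to be economical about how the sign factor on each involution is distributed and how the permutation is merged with the transvections, so that each column costs a genuinely small constant rather than a loose upper estimate.
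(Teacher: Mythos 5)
The paper does not prove this statement at all: it is quoted verbatim from \cite{Ishibashi}, so there is no internal proof to compare yours against. Judged on its own terms, your strategy (reduce to a signed permutation by elementary operations, realize each transvection as a product of two involutions, use $n\geq 3$ and the stable range of $\mathbb{Z}$ to keep the reduction bounded) is the natural one and is directionally sound, but it contains a concrete quantitative gap that breaks even the linear-in-$n$ bound, let alone the constant $3n+9$.

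The gap is in the sentence ``using a constant number of elementary operations per column and hence $O(n)$ elementary operations overall.'' If ``elementary operation'' means a single transvection $z_i\mapsto z_i+cz_j$, this is false: once a pivot $1$ sits in position $(1,1)$, clearing the rest of the first column already requires $n-1$ such operations (one per entry), so the full Gaussian elimination uses on the order of $n^2$ transvections, and with your involutions $(z_i,z_j)\mapsto(z_i+az_j,-z_j)$, which touch only two coordinates at a time, this translates into $\Theta(n^2)$ involutions. To get a linear count you must use involutions acting on many coordinates simultaneously, e.g. the reflections $z_i\mapsto -z_i+\sum_{j\neq i}a_jz_j$, $z_k\mapsto z_k$ ($k\neq i$); the product of two of these realizes an arbitrary bundle $z_i\mapsto z_i+\sum_{j\neq i}c_jz_j$ at a cost of $2$ involutions, so that each column costs $O(1)$ involutions rather than $O(n)$. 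Your proposal never introduces such elements. Finally, even after this repair you would obtain $cn+O(1)$ for some unspecified $c$; deriving the specific constant $3$ (and the additive $9$) is exactly the nontrivial bookkeeping carried out in \cite{Ishibashi}, and deferring it to ``optimising the bookkeeping'' leaves the stated bound unproved. The stable-range step for $n\geq 3$ (bounded reduction of a unimodular column, which genuinely fails for $\mathrm{SL}(2,\mathbb{Z})$) is correctly identified but also only asserted, not carried out with an explicit operation count, and without that count no explicit bound can follow.
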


\begin{corollary}
Let $\varphi$ be an element of $\mathrm{Mon}(n,\mathbb{C})$, 
with $n\geq 3$. Then $\varphi$ can be written as a composition
of involutions of $\mathrm{Mon}(n,\mathbb{C})$, and 
$\mathfrak{n}\big(\varphi,\mathrm{Mon}(n,\mathbb{C})\big)\leq 3n+9$.
\end{corollary}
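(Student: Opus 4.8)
The plan is to reduce the statement to Ishibashi's theorem by exploiting the group isomorphism $\mathrm{Mon}(n,\mathbb{C})\cong\mathrm{GL}(n,\mathbb{Z})$ recalled just above. Concretely, I would first make this identification explicit: as the automorphisms of the algebraic group $\mathbb{G}_m^n$ fix the identity $(1,\ldots,1)$, a monomial map $\varphi\in\mathrm{Mon}(n,\mathbb{C})$ is determined by a matrix $A=(a_{ij})\in\mathrm{GL}(n,\mathbb{Z})$ of integer exponents via
\[
\varphi_A\colon(z_1,\ldots,z_n)\mapsto\Big(\prod_{j}z_j^{a_{1j}},\,\ldots,\,\prod_{j}z_j^{a_{nj}}\Big),
\]
and a direct computation shows that the composition of two monomial maps corresponds to the product of the associated matrices. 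Thus $A\mapsto\varphi_A$ is a group isomorphism from $\mathrm{GL}(n,\mathbb{Z})$ onto $\mathrm{Mon}(n,\mathbb{C})$.

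The key observation is then purely formal: any group isomorphism carries involutions to involutions and preserves the number of factors in a product. Hence, writing $A\in\mathrm{GL}(n,\mathbb{Z})$ for the matrix of $\varphi$ and invoking Ishibashi's theorem, I would decompose $A=M_1M_2\cdots M_k$ as a product of involutions $M_i\in\mathrm{GL}(n,\mathbb{Z})$ with $k\leq 3n+9$. Transporting this equality through the isomorphism yields $\varphi=\varphi_{M_1}\circ\varphi_{M_2}\circ\cdots\circ\varphi_{M_k}$, where each $\varphi_{M_i}$ is an involution of $\mathrm{Mon}(n,\mathbb{C})$; these are exactly the involutions of $\mathrm{Mon}(n,\mathbb{C})$ induced by the involutions of $\mathrm{GL}(n,\mathbb{Z})$ that already appear in the list of higher-dimensional involutions. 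This gives the desired bound $\mathfrak{n}\big(\varphi,\mathrm{Mon}(n,\mathbb{C})\big)\leq 3n+9$.

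In truth there is no real obstacle here: once the dictionary between monomial maps and integer matrices is in place, the corollary is a verbatim translation of Ishibashi's theorem, and the bound $3n+9$ transfers unchanged because the isomorphism neither creates nor destroys factors. The only point deserving a line of verification is the relation $\varphi_A\circ\varphi_B=\varphi_{AB}$, that is, that composition on the monomial side matches matrix multiplication on the $\mathrm{GL}(n,\mathbb{Z})$ side under the chosen convention; once the convention is fixed consistently, the statement follows immediately.
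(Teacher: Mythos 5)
Your proposal is correct and is essentially the paper's own argument: the paper identifies $\mathrm{Mon}(n,\mathbb{C})$ with $\mathrm{GL}(n,\mathbb{Z})$ (as the algebraic automorphism group of $\mathbb{G}_m^n$) and deduces the corollary directly from Ishibashi's theorem by transporting the decomposition into involutions through this isomorphism. The only difference is that you spell out the dictionary $A\mapsto\varphi_A$ and the compatibility $\varphi_A\circ\varphi_B=\varphi_{AB}$, which the paper leaves implicit.
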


\begin{remark}
If $n$ is even, then 
$\mathrm{Mon}(n,\mathbb{C})\subset\mathrm{G}_n(\mathbb{C})$ 
(\emph{see} \cite{BlancHeden}) ; Proposition \ref{pro:Gn} thus already 
says that any monomial map of $\mathrm{Bir}(\mathbb{P}^n_\mathbb{C})$ 
can be written as a composition of involutions but here we get 
two more informations: 
\begin{itemize}
\item a bound for the minimal number of involutions, 

\item and the fact that the involutions belong to 
$\mathrm{Mon}(n,\mathbb{C})$.
\end{itemize}
Furthermore Proposition \ref{pro:Gn} 
gives nothing for $\mathrm{Mon}(n,\mathbb{C})$ for $n$ odd since 
$\mathrm{Mon}(n,\mathbb{C})\not\subset\mathrm{G}_n(\mathbb{C})$ 
as soon as $n$ is odd (\cite{BlancHeden}).
\end{remark}

\begin{corollary}
Any element of 
$\mathrm{N}\big(\mathrm{Mon}(n,\mathbb{C});\mathrm{Bir}(\mathbb{P}^n_\mathbb{C})\big)$ 
is a composition of involutions of~$\mathbb{P}^n_\mathbb{C}$.
\end{corollary}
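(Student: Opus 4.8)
The plan is to exhibit a single normal subgroup of $\mathrm{Bir}(\mathbb{P}^n_\mathbb{C})$ that both contains $\mathrm{Mon}(n,\mathbb{C})$ and consists entirely of compositions of involutions; since the normal closure $\mathrm{N}\big(\mathrm{Mon}(n,\mathbb{C});\mathrm{Bir}(\mathbb{P}^n_\mathbb{C})\big)$ is by definition the smallest normal subgroup containing $\mathrm{Mon}(n,\mathbb{C})$, it will automatically be contained in this subgroup, which is exactly the assertion. Concretely, I would set $\mathrm{I}$ to be the set of all elements of $\mathrm{Bir}(\mathbb{P}^n_\mathbb{C})$ that can be written as a finite composition of involutions of $\mathbb{P}^n_\mathbb{C}$. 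That $\mathrm{I}$ is a subgroup is immediate: the identity equals $\iota\circ\iota$ for any involution $\iota$; the composition of two elements of $\mathrm{I}$ is again a composition of involutions; and the inverse of $\iota_1\circ\iota_2\circ\cdots\circ\iota_k$ is $\iota_k\circ\cdots\circ\iota_2\circ\iota_1$ because each factor is its own inverse.

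The only step carrying any content is normality. Given $\varphi=\iota_1\circ\iota_2\circ\cdots\circ\iota_k\in\mathrm{I}$ and an arbitrary $g\in\mathrm{Bir}(\mathbb{P}^n_\mathbb{C})$, I would write
\[
g\circ\varphi\circ g^{-1}=\big(g\circ\iota_1\circ g^{-1}\big)\circ\big(g\circ\iota_2\circ g^{-1}\big)\circ\cdots\circ\big(g\circ\iota_k\circ g^{-1}\big),
\]
and observe that each factor $g\circ\iota_j\circ g^{-1}$ is again an involution, since $\big(g\circ\iota_j\circ g^{-1}\big)^2=g\circ\iota_j^2\circ g^{-1}=\mathrm{id}$ and the conjugate of a non-trivial map is non-trivial. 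Hence $g\circ\varphi\circ g^{-1}\in\mathrm{I}$, so $\mathrm{I}$ is a normal subgroup of $\mathrm{Bir}(\mathbb{P}^n_\mathbb{C})$. This is precisely the general principle already recorded in the Remark of the introduction.

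Finally I would invoke the preceding corollary, which states that every element of $\mathrm{Mon}(n,\mathbb{C})$ is a composition of involutions, i.e. $\mathrm{Mon}(n,\mathbb{C})\subset\mathrm{I}$. Combining this inclusion with the normality of $\mathrm{I}$ and the minimality of the normal closure yields $\mathrm{N}\big(\mathrm{Mon}(n,\mathbb{C});\mathrm{Bir}(\mathbb{P}^n_\mathbb{C})\big)\subset\mathrm{I}$, which is the desired conclusion. There is no genuine obstacle in this argument: it is a formal consequence of the normality of the involution subgroup together with the containment supplied by the previous corollary, and the same template would equally prove the analogous statements for $\mathrm{J}_{\smallO{}}(1;\mathbb{P}^3_\mathbb{C})$ and $\mathrm{G}_n(\mathbb{C})$.
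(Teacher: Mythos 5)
Your proof is correct and follows exactly the route the paper intends: the corollary is stated without proof precisely because it is the formal consequence, recorded in the Remark of the introduction, that the set of compositions of involutions is a normal subgroup of $\mathrm{Bir}(\mathbb{P}^n_\mathbb{C})$, combined with the preceding corollary placing $\mathrm{Mon}(n,\mathbb{C})$ inside that subgroup. Nothing to add.
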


\subsection{Subgroups $\mathrm{J}_n$}

Let us introduce $\mathrm{J}_n$ the subgroup of 
$\mathrm{Bir}(\mathbb{P}^n_\mathbb{C})$ formed by the maps of 
the type
\[
\left(\varphi_0,\varphi_1,\ldots,\varphi_{n-2},\frac{\alpha z_{n-1}+\beta}{\gamma z_{n-1}+\delta}\right)
\]
with
\begin{small}
\[
\varphi_i=\left(\frac{z_iA_i(z_{i+1},z_{i+2},\ldots,z_{n-1})+B_i(z_{i+1},z_{i+2},\ldots,z_{n-1})}{z_iC_i(z_{i+1},z_{i+2},\ldots,z_{n-1})+D_i(z_{i+1},z_{i+2},\ldots,z_{n-1})}\right)
\in\mathrm{PGL}(2,\mathbb{C}(z_{i+1},z_{i+2},\ldots,z_{n-1}))
\]
\end{small}
and
\[
\frac{\alpha z_{n-1}+\beta}{\gamma z_{n-1}+\delta} \in\mathrm{PGL}(2,\mathbb{C}).
\]

According to the proof of Lemma \ref{lem:J0} one gets:

\begin{proposition}
Let $\varphi=(\varphi_0,\varphi_1,\ldots,\varphi_{n-1})$ be an 
element of $\mathrm{J}_n$.

Assume $0\leq i\leq n-2$. If $\det \varphi_i=\pm 1$, then 
$\mathfrak{n}\big(\varphi_i,\mathrm{PGL}(2,\mathbb{C}(z_{i+1},z_{i+2},\ldots,z_{n-1}))\big)\leq~4$
otherwise 
$\mathfrak{n}\big(\varphi_i,\mathrm{PGL}(2,\mathbb{C}(z_{i+1},z_{i+2},\ldots,z_{n-1}))\big)\leq 8$.

In particular $\mathfrak{n}\big(\varphi,\mathrm{J}_n\big)\leq 4(2n-1)$.
\end{proposition}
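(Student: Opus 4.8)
The plan is to reduce the whole statement to the one-variable situation already settled in Lemma~\ref{lem:J0}, and then to reassemble the factors by a \emph{triangular} decomposition of $\varphi$, in the spirit of the factorisation of an element of $\mathrm{J}_2$ used in Corollary~\ref{cor:jonquieres}.

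First I would establish the per-factor bounds. Fix $i$ with $0\le i\le n-2$ and put $K_i=\mathbb{C}(z_{i+1},z_{i+2},\ldots,z_{n-1})$; then $\varphi_i$ is an element of $\mathrm{PGL}(2,K_i)$ acting on the single variable $z_i$, so that the proof of Lemma~\ref{lem:J0} goes through verbatim with the field $\mathbb{C}(z_1)$ replaced by $K_i$. Writing $P\in K_i$ for the determinant of a representative of $\varphi_i$, the scaling by $1/P$ is the composition of the two involutions
\[
\frac{1}{z_iP}\circ\frac{1}{z_i}
\]
of $\mathrm{PGL}(2,K_i)$; after multiplying $\varphi_i$ by it one lands in $\mathrm{SL}(2,K_i)$, which is a composition of at most $6$ involutions by the first assertion of Lemma~\ref{lem:transvection}. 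This yields the bound $\le 8$. If moreover $\det\varphi_i=\pm 1$ one skips the scaling and invokes \cite{GustafsonHalmosRadjavi} directly, exactly as in Lemma~\ref{lem:J0}, to write $\varphi_i$ as a composition of at most $4$ involutions, whence the bound $\le 4$.

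For the global estimate I would decompose $\varphi$ as a product of maps each altering a single coordinate. Let $\psi_i\in\mathrm{J}_n$ be the map acting by $\varphi_i$ on $z_i$ and by the identity on every other coordinate, and let $\psi_{n-1}$ be the analogous map attached to the last homography $\frac{\alpha z_{n-1}+\beta}{\gamma z_{n-1}+\delta}\in\mathrm{PGL}(2,\mathbb{C})$. Since $\varphi_i$ depends only on $z_i,z_{i+1},\ldots,z_{n-1}$, composing in the order
\[
\varphi=\psi_{n-1}\circ\psi_{n-2}\circ\ldots\circ\psi_1\circ\psi_0
\]
(that is, applying $\psi_0$ first) reproduces $\varphi$: at the moment $\psi_i$ is applied the coordinates $z_i,\ldots,z_{n-1}$ still carry their original values, because the maps performed so far only touched coordinates of index $<i$. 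An involution of the factor $\mathrm{PGL}(2,K_i)$, read as a map of $\mathbb{P}^n_\mathbb{C}$ that touches only $z_i$, is again an involution belonging to $\mathrm{J}_n$; hence $\mathfrak{n}(\psi_i,\mathrm{J}_n)$ is bounded by the per-factor quantity found above, while $\mathfrak{n}(\psi_{n-1},\mathrm{J}_n)\le 2$ by Lemma~\ref{lem:homography}. Summing the $n-1$ contributions of size $\le 8$ coming from $i=0,\ldots,n-2$ with the $\le 2$ coming from the last homography gives
\[
\mathfrak{n}(\varphi,\mathrm{J}_n)\le 8(n-1)+2=8n-6\le 4(2n-1).
\]

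The argument is largely bookkeeping, with all the genuine content imported from Lemma~\ref{lem:transvection}, \cite{GustafsonHalmosRadjavi} and Lemma~\ref{lem:homography}. The only routine points needing care are that the scaling involutions $\frac{1}{z_i}$ and $\frac{1}{z_iP}$, together with every involution produced inside a factor, really lie in $\mathrm{PGL}(2,K_i)\subset\mathrm{J}_n$ once embedded as single-coordinate maps, and that this embedding preserves the involution property. I expect the one genuinely delicate verification to be that the triangular product above rebuilds $\varphi$ on the nose; this is precisely where the nested dependence $\varphi_i\in\mathrm{PGL}(2,\mathbb{C}(z_{i+1},\ldots,z_{n-1}))$ is used, and it is the natural higher-dimensional analogue of the $\mathrm{J}_2$-factorisation in Corollary~\ref{cor:jonquieres}.
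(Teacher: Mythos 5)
Your proposal is correct and follows essentially the same route as the paper, which simply invokes the proof of Lemma~\ref{lem:J0} factor by factor: the per-factor bounds come from Lemma~\ref{lem:transvection} and \cite{GustafsonHalmosRadjavi} applied over $K_i=\mathbb{C}(z_{i+1},\ldots,z_{n-1})$, and the triangular product $\psi_{n-1}\circ\cdots\circ\psi_0$ is the implicit reassembly step. Your bookkeeping even yields the marginally sharper constant $8n-6\leq 4(2n-1)$.
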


\begin{corollary}
Any element of 
$\mathrm{N}\big(\mathrm{J}_n;\mathrm{Bir}(\mathbb{P}^n_\mathbb{C})\big)$ 
can be written as a composition of involutions of $\mathbb{P}^n_\mathbb{C}$.
\end{corollary}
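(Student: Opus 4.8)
The plan is to deduce the corollary from the preceding Proposition by the general principle, already invoked in the introductory Remark, that the maps expressible as compositions of involutions form a normal subgroup of $\mathrm{Bir}(\mathbb{P}^n_\mathbb{C})$. First I would introduce the subset $\mathrm{Inv}\subset\mathrm{Bir}(\mathbb{P}^n_\mathbb{C})$ consisting of all birational self maps that can be written as a finite composition of involutions of $\mathbb{P}^n_\mathbb{C}$, and check that it is a subgroup. It is obviously stable under composition, and it is stable under inversion because the inverse of $\iota_1\circ\iota_2\circ\ldots\circ\iota_m$ is $\iota_m\circ\ldots\circ\iota_2\circ\iota_1$, each $\iota_j$ being its own inverse.

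Next I would verify that $\mathrm{Inv}$ is \emph{normal} in $\mathrm{Bir}(\mathbb{P}^n_\mathbb{C})$. For any $\psi\in\mathrm{Bir}(\mathbb{P}^n_\mathbb{C})$ and any $\iota_1\circ\ldots\circ\iota_m\in\mathrm{Inv}$ one has
\[
\psi\circ(\iota_1\circ\ldots\circ\iota_m)\circ\psi^{-1}=(\psi\circ\iota_1\circ\psi^{-1})\circ\ldots\circ(\psi\circ\iota_m\circ\psi^{-1}),
\]
and each factor $\psi\circ\iota_j\circ\psi^{-1}$ is again an involution, so the conjugate lies in $\mathrm{Inv}$. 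This is the only point requiring any care, namely that a conjugate of an involution is an involution, which is immediate.

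Finally, the preceding Proposition shows that every element of $\mathrm{J}_n$ is a composition of involutions: the $\leq 4(2n-1)$ involutions it produces belong to $\mathrm{J}_n\subset\mathrm{Bir}(\mathbb{P}^n_\mathbb{C})$, hence are in particular involutions of $\mathbb{P}^n_\mathbb{C}$. Thus $\mathrm{J}_n\subset\mathrm{Inv}$. Since $\mathrm{Inv}$ is a normal subgroup of $\mathrm{Bir}(\mathbb{P}^n_\mathbb{C})$ containing $\mathrm{J}_n$, it must contain the \emph{smallest} normal subgroup containing $\mathrm{J}_n$, that is
\[
\mathrm{N}\big(\mathrm{J}_n;\mathrm{Bir}(\mathbb{P}^n_\mathbb{C})\big)\subset\mathrm{Inv},
\]
which is exactly the assertion. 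I expect no real obstacle here: the argument is purely group-theoretic and reuses verbatim the normality observation made in the introduction, the substantive work having already been carried out in the Proposition above.
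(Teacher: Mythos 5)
Your argument is correct and is precisely the one the paper intends: the corollary is stated without proof because it follows immediately from the preceding Proposition together with the observation, already made in the introductory Remark, that the birational maps expressible as compositions of involutions form a normal subgroup of $\mathrm{Bir}(\mathbb{P}^n_\mathbb{C})$. Your write-up simply makes that implicit reasoning explicit, so there is nothing to add.
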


\bibliographystyle{plain}
\bibliography{biblio}

\end{document}